\documentclass[12pt]{amsart}
\usepackage[english]{babel}
\usepackage{a4wide,color,graphicx,amsmath,amssymb,verbatim,mathrsfs,latexsym}

\allowdisplaybreaks
\newtheorem{theorem}{Theorem}
\newtheorem{definition}{Definition}
\newtheorem{prop}{Proposition}
\newtheorem{lemma}{Lemma}

\newcommand{\vc}[1]{{\mathbf {#1}}} 
\newcommand{\cc}{\vec{c}}
\newcommand{\vv}{\vc{v}}
\newcommand{\vw}{\vc{w}}
\newcommand{\vzeta}{\vec{\zeta}}
\newcommand{\R}{\mathbb{R}}	
\newcommand{\N}{\mathbb{N}}	
\newcommand{\eps}{\varepsilon}	
\newcommand{\pa}{\partial}		
\newcommand{\Div}{\mathop{\mathrm{div}}\nolimits}	

\newcommand{\na}{\nabla}		
\newcommand{\OmT}{			
\Omega\times (0,T) }

\def\dx{\; {\rm d}x}
\def\dS{\; {\rm d}S}
\def\dt{\; {\rm d}t}
\def\dtau{\; {\rm d}\tau}

\title[Incompressible electrically charged chemically reacting mixtures]{Existence analysis for incompressible fluid model of electrically charged chemically reacting and heat conducting mixtures}
\author{Miroslav Bul\'i\v{c}ek}
\address{Charles University, Faculty of Mathematics and Physics, Sokolovsk\'a 83, 186 75 Praha 8, Czech Republic}
\email{mbul8060@karlin.mff.cuni.cz}
\author{Milan Pokorn\'y}
\address{Charles University, Faculty of Mathematics and Physics, Sokolovsk\'a 83, 186 75 Praha 8, Czech Republic}
\email{pokorny@karlin.mff.cuni.cz}
\author{Nicola Zamponi}
\address{Institute for Analysis and Scientific Computing, Vienna University of
	Technology, Wiedner Hauptstra\ss e 8--10, 1040 Wien, Austria}
\email{nicola.zamponi@tuwien.ac.at}
\date{\today}

\numberwithin{equation}{section}

\begin{document}

\begin{abstract}
We deal with a three dimensional model based on the use of barycentric velocity that describes unsteady flows of a heat conducting electrically charged multicomponent chemically reacting non-Newtonian fluid. We show that under certain assumptions on data and the constitutive relations for such a fluid there exists a global in time and large data weak solution. The paper has two key novelties. The first one is that we present a model that is thermodynamically and mechanically consistent and that is able to describe the cross effects in a  generality never considered before, i.e., we cover the so-called Soret effect, Dufour effect, Ohm law, Peltier effect, Joul heating, Thompson effect, Seebeck effect and also the generalized Fick law. The second key novelty is that contrary to the previous works on the similar topic, we do not need to deal with the energy equality method and therefore we are able to cover a large range of power-law parameters in the Cauchy-stress. In particular, we cover even the Newtonian case (which is the most used model), for which the existence analysis was missing.
\end{abstract}

\keywords{mixtures, non-Newtonian fluid, heat conducting fluid, electrically charged fluid, cross effects, existence of a weak solution}

\subjclass[2010]{35Q35, 35A01, 35D30, 76A05, 76T99, 80A20, 92C05}

\maketitle

\section{Introduction}

We consider a model of an incompressible, multicomponent, heat conducting, electrically charged fluid that is capable to capture the cross-diffusion effects and that is mechanically and thermodynamically consistent. The main goal of this paper is to propose such a model which is able to describe many physical phenomena and  to establish the global in time and large data existence of weak solutions to this model. The general framework we deal with is as follows. For a bounded open set
$\Omega \subset \mathbb{R}^3$ with a boundary of the class $C^{1,1}$ and given the length of the time interval $T>0$, we denote $\Omega_T:=(0,T)\times \Omega$. We consider a flow of an incompressible mixture in $\Omega_T$ that consists of $L\in \mathbb{N}$ constituents with $L\ge 2$. This problem is described by the following set of equations
\begin{align}
 \label{eq.c}
 \mbox{ for } i=1,2,\dots,L\colon \qquad \pa_t c_i + \Div(c_i \vc{v} + \vc{q}_{\cc}^i) - r_i &= 0,\\
 \label{eq.v}
 \pa_t \vc{v} + \Div(\vc{v}\otimes \vc{v} - \mathcal{T}) + Q\na\varphi &= 0,\\
 \label{eq.divv}
 \Div \vc{v} &= 0,\\
 \label{eq.poi}
 -\Delta\varphi &= Q,\\
 \pa_t E  +\Div \left(\left(\frac{|\vc{v}|^2}{2}+e+Q\varphi\right)\vc{v}  +\varphi\sum_{i=1}^L z_i \vc{q}_{\cc}^i  + \vc{q}_e- \mathcal{T} \vc{v} -\varphi\na\pa_t\varphi
 \right) &= 0.\label{eq.E}
\end{align}
Here, $\vec{c}:=(c_1,\ldots, c_L)$: $\Omega_T \to [0,1]^L$ are the species concentrations, $\vc{v}=(v_1,v_2,v_3)$: $\Omega_T \to\R^3$ is the velocity field, $\varphi$: $\Omega_T \to\R$ is the electric potential,  $\vc{q}_{\cc}^i=(q_{\cc}^{i1}, q_{\cc}^{i2}, q_{\cc}^{i3})$: $\Omega_T \to \R^3$ are the fluxes of the corresponding concentrations $c_i$, $\vc{q}_e=(q_e^1, q_e^2, q_e^3)$: $\Omega_T\to \R^3$ denotes the heat flux, $\vec{r}=(r_1,\ldots, r_L)$: $\Omega_T \to \R^L$ are the reaction/productions terms for the concentration $\vec{c}$, $\vec{z}=(z_1,\ldots, z_L)\in \R^L$ are  the specific electric charges of $\vec{c}$, $Q:=\sum_{i=1}^L c_i z_i=\cc\cdot \vec{z}$ is the total electric charge,  $\mathcal{T}$: $\Omega_T \to \R^{3\times 3}$ is the Cauchy stress tensor and $E:\Omega_T \to \R_+$ is the total energy of the fluid given as a sum of the kinetic, the internal and the electrostatic energy, i.e., $E:=|\vv|^2/2 + e + |\nabla \varphi|^2/2$ with $e$: $\Omega_T \to \R_+$ the internal energy. The  terms inside the divergence in \eqref{eq.E} represent the fluxes of the total energy due to the dissipation, the heat transfer, the flux of all charged constituent and the flux due to the time evolution of the electrostatic potential.

System \eqref{eq.c}--\eqref{eq.E} is completed by the initial conditions
\begin{equation}
\cc(0)=\cc^{\ 0}, \quad \vv(0)=\vv^0, \quad e(0)=e^0,\label{IC}
\end{equation}
and by the following set of boundary conditions on $\Gamma:=(0,T)\times \Omega$
\begin{align}
 \vv\cdot\nu &= 0,\quad (\mathcal{I} - \nu\otimes\nu)\mathcal{T}\nu = -\gamma \vv, \label{bc.vA}\\
 \vc{q}_{\cc}^i\cdot\nu &= q^i_{\cc\ \Gamma}, \quad
 \vc{q}_e\cdot\nu = q_{e\Gamma}, \quad
 \na\varphi\cdot\nu = q_{\varphi \Gamma},\label{bc.phiA}
\end{align}
where $\nu$ denotes the unit outward normal vector to $\pa\Omega$. All parameters, i.e., $\gamma$, $q^i_{\cc\ \Gamma}$, $q_{e\Gamma}$, $q_{\varphi \Gamma}$ may depend on $(t,x)$ and also on some unknowns and their form will be specific later. We would like to underline that while in \eqref{bc.vA}, which are the Navier slip boundary conditions, we do not permit the total mass to flow through the boundary, the boundary conditions \eqref{bc.phiA} allow to consider the flux of the single species, heat and charge.

System \eqref{eq.c}--\eqref{eq.E} contains a lot of simplifications. First, we consider the volume additivity and model the whole mixture as being incompressible, and we set the density to be equal to one. Second,  the magnetic field and polarization are neglected, which reduces the Maxwell equations to \eqref{eq.poi}, where again without loss of generality but for simplicity of further explanation, we set the permittivity equal to one. The Lorenz force in the momentum equation \eqref{eq.v} is then reduced to $Q\nabla \varphi$. The justification of such simplifications can be found e.g. in  \cite{Eckar:1940,Prigo:1967,RaYaWi94,RajagTao:1995}. On the other hand, the above system is still rich enough to describe many phenomena presented in the electrically charged and heat conducting mixture flow, as for example the Peltier effect, the Joule heat, the Fourier, the Fick and the Ohm laws, the Soret and the Dufour effects. The goal of the paper is to introduce such constitutive assumptions on parameters that will lead to this generality on one hand and will be mathematically  treatable  on the other hand. The main concept and also the key tool, used in this paper, is the mechanical and thermodynamic compatibility of all assumptions, that will lead to the energy balance, and the entropy inequality which will provide us  with the key estimates allowing us to develop the existence theory.

\subsection*{Notation}
Throughout the paper we use the following notation. A scalar or a scalar-valued function is printed with the usual font, a vector or a vector-valued function corresponding to the three-dimensional space are printed in bold-face, the vectors and vector-valued functions connected with the number of species ($L$) are printed with the sign $\vec{\ }$ above the letter, the tensors and tensor-valued functions in three space dimensions (i.e., with 9 components) are printed with a special font  (as e.g. $\mathcal{I}$ for $I$) and the tensors and tensor-valued functions connected at least with one component to the number of species are printed with another special font (as e.g. $\mathfrak{I}$ for $I$).

For any nonzero vector $\vec{a}\in\R^L$ we introduce the matrix  $\mathfrak{P}_{\vec{a}}\in\R^{L\times L}$ by the formula
$$
\mathfrak{P}_{\vec{a}}:=\mathfrak{I}-\frac{\vec{a}\otimes \vec{a}}{|\vec{a}|^2}, \qquad \textrm{ i.e. } (\mathfrak{P}_{\vec{a}})_{ij} := \delta_{ij} - \frac{a_i a_j}{|\vec{a}|^2}.
$$
The role of $\mathfrak{P}_{\vec{a}}$ is that it generates the projection onto the space orthogonal to $\vec{a}$, i.e., for any $\vec{x}\in \R^L$ there holds $\mathfrak{P}_{\vec{a}} \vec{x} \cdot \vec{a}=0$. We also define the vector
$$
\vec{\ell}:= (1,\ldots,1)\in \mathbb{R}^L.
$$

Concerning the function spaces,  we will use standard notation for the Lebesgue spaces ($L^p$) and the Sobolev spaces ($W^{k,p}$) endowed with the standard norms. We denote the corresponding norm with the lower index $p$ and, $k,p$, respectively. For vector-valued functions on $\Omega$ we will use notation $L^p(\Omega;\R^3)$; similarly for tensor-valued functions, Sobolev spaces etc. We further define for $1<p<\infty$
$$
L^p_{0,\Div}(\Omega):= \overline{C^{\infty}_{0,\Div}(\Omega; \R^3)}^{\|\cdot\|_{L^p(\Omega)}}
$$
and
$$
W^{1,p}_{\Div}(\Omega):= \overline{C^{\infty}_{\Div}(\overline{\Omega}; \R^3)}^{\|\cdot\|_{W^{1,p}(\Omega)}},
$$
where $C^{\infty}_{0,\Div}(\Omega; \R^3)$ denotes the set of smooth compactly supported functions in $\Omega$ with zero divergence. Next, we define
$$
W_{0,\nu}^{1,p}(\Omega)= \{\vc{u}\in L^p(\Omega;\R^3); \na \vc{u} \in L^p(\Omega;\R^{3\times 3}), \vc{u} \cdot \nu = 0 \mbox{ on }\partial \Omega\},
$$
where $\nu$ denotes the unit outer normal vector on $\partial \Omega$. For the functions defined on the time--space we use the notation $L^p(0,T; L^q(\Omega))$ and similarly for other spaces. For the notation of the norms, we will use the full description. For the corresponding dual spaces we will use the standard notation.

\section{Constitutive relations and main assumptions}

\subsection{Mechanical and thermodynamical consistency}
First, it is natural to assume that the chemical reactions change neither the total mass nor the total charge, i.e.,
\begin{align}
 \sum_{i=1}^L r_i = \sum_{i=1}^L z_i r_i = 0.\label{eq.r}
\end{align}
Second, since $\vc{q}_{\cc}^i$ model the flux of $c_i$, that is the difference of $c_i\vc{v}^i$ and $c_i\vc{v}$, where $\vc{v}^i$ is the velocity of the i-th species, we also impose the assumption
\begin{align}
 \sum_{i=1}^L \vc{q}_{\cc}^i=\vc{0}\label{eq.qc}
\end{align}
in order to obtain a compatible model, see \cite{BuHa15} for more details. Note that summing \eqref{eq.c} over $i=1,\ldots, L$ then leads under the assumptions \eqref{eq.r}--\eqref{eq.qc} to the following transport equation
\begin{equation}
\pa_t (\cc\cdot \vec{\ell})  +\Div \left((\cc\cdot \vec{\ell})\vv\right)=0.\label{eq.transport}
\end{equation}

Next, we focus on the equation for the internal energy and then also on the entropy inequality. Thus, we multiply the $i$-th equation in \eqref{eq.c} by $z_i$, sum over $i=1,\ldots, L$ and use \eqref{eq.r} to obtain the following identity for the total charge $Q=\sum_{i=1}^L c_i z_i$
\begin{align}
 \pa_t Q + \Div\left(Q \vv + \sum_{i=1}^L z_i \vc{q}_{\cc}^i\right) = 0, \label{eq.Q}
\end{align}
which can be in view of \eqref{eq.poi} rewritten as
\begin{align}
- \Delta \pa_t \varphi + \Div\left(Q \vv + \sum_{i=1}^L z_i \vc{q}_{\cc}^i\right) = 0. \label{eq.Q2}
\end{align}
It is worth noticing that this is also equivalent to
\begin{align*}
 \Div\left(Q \vv + \sum_{i=1}^L z_i \vc{q}_{\cc}^i -\nabla \pa_t \varphi\right) =0.
\end{align*}
Next, we multiply \eqref{eq.Q2} by $\varphi$ and obtain
\begin{align}
 \pa_t\frac{|\na\varphi|^2}{2}
 + \Div\left(Q\varphi \vv + \varphi\sum_{i=1}^L z_i \vc{q}_{\cc}^i- \varphi\na\pa_t\varphi\right)-Q\nabla \varphi\cdot \vv-\nabla \varphi \cdot  \sum_{i=1}^L z_i \vc{q}_{\cc}^i = 0.\label{eq.phi}
\end{align}
Finally, taking the scalar product of \eqref{eq.v} with  $\vv$, we deduce the following identity for the kinetic energy
\begin{align}
 \pa_t\frac{|\vv|^2}{2} + \Div\left( \frac{|\vv|^2 \vv}{2}- \mathcal{T}\vv\right)+ \mathcal{T}: \na \vv + Q \vv\cdot\na\varphi = 0.\label{eq.v2}
\end{align}
Consequently, subtracting \eqref{eq.phi} and \eqref{eq.v2} from \eqref{eq.E}, we obtain the equation for the internal energy
\begin{align}
\label{eq.e}
 \pa_t e + \Div(e \vv + \vc{q}_e) - \mathcal{T}:\na \vv + \sum_{i=1}^L z_i \vc{q}_{\cc}^i\cdot\na\varphi &=0.
\end{align}

To end this part we derive the identity for the entropy. We assume that the \emph{entropy density} associated to system \eqref{eq.c}--\eqref{eq.E} is a function of the internal energy and the concentration vector $\cc$, i.e., $s := s^*(\cc,e):(0,1)^L \times \R_{+} \to \R_{+}$. Next, we define  the {\em chemical potential\footnote{Usually the chemical potential is also given as $\frac{\mu}{\theta}$, where $\mu$ is the vector of mobilities.}} $\vzeta$: $\Omega_T \to \R^L$ and the {\em temperature} $\theta$: $\Omega_T \to \R_+$ as
\begin{align}
\vzeta=\vzeta^*(\cc,e):= -\pa_{\cc} s^*(\cc,e),\qquad \theta = \theta^*(\cc,e):=\frac{1}{\pa_e s^*(\cc,e)}. \label{zetatheta}
\end{align}
Then, multiplying \eqref{eq.c} by $\zeta_i$, \eqref{eq.e} by $\theta^{-1}$ and using the fact that $\Div \vv=0$ (see \eqref{eq.divv}) we get
\begin{align*}
 \pa_t s &= -\sum_{i=1}^L\zeta_i\pa_t c_i + \theta^{-1}\pa_t e\\
 &= \sum_{i=1}^L\zeta_i\left( \Div(c_i \vv + \vc{q}_{\cc}^i) - r_i \right)
 + \frac{1}{\theta}\left( -\Div(e \vv + \vc{q}_e) + \mathcal{T}:\na \vv - \sum_{i=1}^L z_i \vc{q}_{\cc}^i\cdot\na\varphi \right)\\
 &= \vv\cdot\left( \sum_{i=1}^L\zeta_i\na c_i - \frac{\na e}{\theta} \right)+\Div\left( \sum_{i=1}^L\zeta_i \vc{q}_{\cc}^i - \frac{\vc{q}_e}{\theta} \right)
 -\sum_{i=1}^L \vc{q}_{\cc}^i \cdot \na\zeta_i+  \vc{q}_e\cdot \na\frac{1}{\theta}\\
 &\quad -\sum_{i=1}^L\zeta_i r_i + \frac{\mathcal{T}:\na \vv}{\theta} - \frac{1}{\theta}\sum_{i=1}^L z_i \vc{q}_{\cc}^i\cdot\na\varphi\\
 &= \Div\left( \sum_{i=1}^L\zeta_i \vc{q}_{\cc}^i - \frac{\vc{q}_e}{\theta} \right) - \vv\cdot\na s
 -\vzeta \cdot \vec{r} + \frac{\mathcal{T}:\na \vv}{\theta}\\
 &\quad -\sum_{i=1}^L \vc{q}_{\cc}^i\cdot\left( \na\zeta_i + \frac{z_i}{\theta}\na\varphi \right) + \vc{q}_e\cdot\na\frac{1}{\theta},
\end{align*}
which, again due to \eqref{eq.divv}, leads to the final entropy identity
\begin{equation}\label{eq.s}
\begin{split}
 \pa_t s &+ \Div\left( s \vv -\sum_{i=1}^L\zeta_i \vc{q}_{\cc}^i +\frac{\vc{q}_e}{\theta}\right)\\
 &= -\vzeta\cdot \vec{r} + \frac{\mathcal{T}:\na \vv}{\theta}
 -\sum_{i=1}^L \vc{q}_{\cc}^i\cdot\left( \na\zeta_i + \frac{z_i}{\theta}\na\varphi \right) + \vc{q}_e\cdot\na\frac{1}{\theta}.
\end{split}
\end{equation}
The Second law of thermodynamics dictates that the right-hand side of \eqref{eq.s} has to be nonnegative and in what follows we introduce the constitutive relations for parameters that will be designed to satisfy this constraint.

\subsection{Constitutive equations and assumptions}

Next, we specify the assumptions on the constitutive equations. We also denote $C_1$ and $C_2$ some positive constants that will appear in the assumptions below.
\subsubsection*{Assumptions on the reaction term $\vec{r}$}
We assume that there exists a continuous functions $\vec{r}^{\, *}:\R^L \times \R_{+,0} \times \R^{L} \to \R$ such that for all $(\cc,\theta,\vzeta)\in \R^L \times \R_{+,0} \times \R^{L}$, we have
\begin{align}
 |\vec{r}^{\,*}(\cc,\theta,\vzeta)|\leq C_2,\qquad \vzeta \cdot \vec{r}^{\,*}(\cc,\theta,\vzeta) &\leq 0,\qquad
 \vec{r}^{\,*}(\cc,\theta,\vzeta)\cdot \vec{\ell} = \vec{z}\cdot \vec{r}^{\, *}(\cc,\theta,\vzeta) = 0.\label{hp.r}
\end{align}
Recall that $\vec{\ell} = (1,\dots,1) \in \R^L$.
We also point out  that in general, it is enough to assume that $r^*$ is defined only for $\cc$ from  $[0,1]^L$ and $\theta \in \R_+$, however, it will be easier for us to assume that this function is extended onto the whole $\R^L$ or to $\R_{+,0}$; the same will be used  several times below.
Then in \eqref{eq.c} we set
$$
\vec{r} := \vec{r}^{\,*}(\cc,\theta,\vzeta).
$$
Note that \eqref{eq.r}  immediately follows from \eqref{hp.r}. Moreover, we see that the first term on the right-hand side of \eqref{eq.s} is nonnegative. It is worth noticing that we do not consider any coercivity for $\vec{r}^{\,*}$, which was the essential assumption in \cite{BuHa15} in order to get a~priori estimate on $\vzeta$.
\subsubsection*{Assumptions on the fluxes $\mathfrak{q}_{\cc}$ and $\vc{q}_e$}
For simplicity, we focus in this paper only on the linear dependence of fluxes on the gradients of the temperature, chemical and electrostatic potentials and through the paper we shall assume that the fluxes are of the following form
\begin{align}
 \mathfrak{q}_{\cc} = \{\vc{q}_{\cc}^i\}_{i=1}^L \mbox{ with } \vc{q}_{\cc}^i &:= -\sum_{j=1}^L M_{ij}\left( \na\zeta_j + \frac{z_j}{\theta}\na\varphi \right) - m_i\na\frac{1}{\theta},\label{qc}\\
 \vc{q}_e &:= -\kappa\na\theta - \sum_{i=1}^L m_i\left( \na\zeta_i + \frac{z_i}{\theta}\na\varphi \right).\label{qe}
\end{align}
Next, we introduce the assumptions on further parameters appearing in \eqref{qc} and \eqref{qe}. We assume that $\kappa= \kappa^*(\cc,\theta)$: $\R^{L}\times \R_+ \to \R_+$ is a continuous function satisfying for some $\beta \in [0,2]$ (here the upper bound for $\beta$ is taken just for simplicity of the presentation and it is possible to allow higher values of $\beta$) and all $(\vec{c},\theta) \in \R^L\times \R_+$
\begin{align}
 C_1 \leq \frac{\kappa^*(\cc,\theta)}{1 + \theta^{-\beta}}\leq C_2.\label{hp.kappa}
\end{align}
Notice here that $\kappa^*(\cc,\theta)\na\theta$ represents the \emph{Fourier law} with the heat conductivity $\kappa^*(\cc,\theta)$ which is assumed to be uniformly bounded with respect to chemical concentration but can possibly blow up if the temperature $\theta$ tends to zero which is a natural\footnote{It should be also emphasized here that for many fluids this behavior is irrelevant since much before the temperature reaches zero  a phase transition will occur. On the other hand, for liquid gasses with the temperature near the absolute zero, which can be modeled as incompressible fluids, this behavior is quite typical.} requirement.

Second, we assume that $\mathfrak{M} = \mathfrak{M}^*(\cc,\theta) = \{M_{ij}^*(\cc,\theta)\}_{i,j=1}^L$: $\R^L\times \R_{+,0} \to \R^{L\times L}_{sym}$ is a continuous symmetric\footnote{Here, the space $\R^{L\times L}_{sym}$ denotes the space of all symmetric $L\times L$ matrices.} matrix-valued mapping and $\vec{m}=\vec{m}^*(\cc,\theta)= \{m_i^*(\cc,\theta)\}_{i=1}^L$: $\R^L\times \R_{+,0} \to \R^L$ is a continuous vector-valued mapping satisfying for all $(\cc,\theta)\in \R^L\times \R_{+,0}$
\begin{align}
 \sum_{i=1}^L M^*_{ij}(\cc,\theta) = \sum_{i=1}^L m_i^*(\cc,\theta) = 0,\qquad \textrm{ for all } j=1,\ldots,L,\label{hp.mM}
\end{align}
for all $\vec{w}\in \R^L$ and $M(\theta)\geq 0$
\begin{align}
 C_1M(\theta)|\mathfrak{P}_{\vec{\ell}}\, \vec{w}|^2\leq\sum_{i,j=1}^L M_{ij}^*(\cc,\theta)w_i w_j\leq C_2M(\theta)|\mathfrak{P}_{\vec{\ell}}\, \vec{w}|^2,\label{hp.M}
\end{align}
and for some $\varepsilon_0>0$
\begin{equation}
\begin{aligned}
C_1\min(1,\theta^{\beta-\varepsilon_0})&\le M(\theta)\le C(1+\theta)^{\frac53 - \varepsilon_0}, \\
|\vec{m}^*(\cc,\theta)|^2&\leq C_2\left\{\begin{aligned}
&\min\{M(\theta) \theta^{-\beta+\varepsilon_0},\theta^{-2(\beta-1)+\varepsilon_0}\}&&\textrm{for }\theta<1,\\
&M(\theta)\theta &&\textrm{for } \theta\ge 1.  \label{hp.m}
\end{aligned}\right.
\end{aligned}
\end{equation}

Let us make few comments about the model represented by \eqref{qc} and \eqref{qe}. First, condition \eqref{eq.qc} is automatically satisfied due to  assumption \eqref{hp.mM}. Therefore, we see that the matrix $\mathfrak{M}$ cannot be positive definite. On the other hand, due to  assumption \eqref{hp.M}, we see that it is positive definite on the  subspace of $\R^L$ which is orthogonal to $\vec{\ell}$. In addition, this subspace is evidently also the range of this matrix. The part of $\mathfrak{q}_{\cc}$ with $\mathfrak{M}\nabla \vzeta$ represents the generalized \emph{Fick law} and we see that due to the possible non-diagonal form of  $\mathfrak{M}$ and due to the presence of $\na \vzeta$ instead of $\na \cc$, we are able to cover the so-called cross effects in a big generality. In addition, the presence of $M(\theta)$ in \eqref{hp.M} and the fact that it is possibly vanishing at zero and tending to infinity for the temperature growing to infinity, makes the model also fully compatible with the \emph{Einstein law}. Further, the part of the flux with $\mathfrak{M} \vec{z}\,\na \varphi /\theta$ models the \emph{Ohm law}. Next, the term $\mathfrak{M} \na \vzeta :(\vec{z}\, \na \varphi)$ in \eqref{eq.E} models the \emph{Peltier effect}, and  the term $\mathfrak{D}(\vec{z}\,\na \varphi): (\vec{z}\,\na \varphi)/\theta$ models\footnote{$\mathfrak{D}$ will be introduced later, in connection with the boundary conditions.} the \emph{power of the Joul heat}. Finally, the possible dependence  of $\mathfrak{q}_{\cc}$ on $\nabla \theta$ then models the so-called \emph{Soret effect} and conversely, the dependence of  $\vc{q}_e$ on $\nabla \vzeta$ models the \emph{Dufour effect}. It is noticeable here that the second law of thermodynamics requires also the presence of $\na \varphi$ in the equation for $\vc{q}_e$, as will be seen later. This represents the \emph{Tomphson effect} and the \emph{Seebeck effect}. We refer at this point to \cite{Roubi:2007} and \cite{BuHa15} for more detailed explanation of these phenomena and for further physical models.

\subsubsection*{Assumptions on the Cauchy stress $\mathcal{T}$}
We assume that the Cauchy stress is decomposed into two parts
\begin{equation}\label{Cauchy}
\mathcal{T}=-p\mathcal{I} + \mathcal{S},
\end{equation}
where $p:\Omega_T\to \R$ is the mean normal stress --- the pressure --- which is in the incompressible setting also an unknown function, and $\mathcal{S}$ is the constitutively determined part of the Cauchy stress given by
\begin{align}
 \mathcal{S} = \mathcal{S}^*(\cc,\theta,\mathcal{D}(\vv)).\label{S}
\end{align}
Here, $\mathcal{D}(\vv)$ denotes the symmetric part of the velocity gradient, i.e., $\mathcal{D}(\vv):=(\nabla \vv + (\nabla \vv)^T)/2$, and the mapping $\mathcal{S}^*$: $\R^L \times \R_{+,0} \times \R^{3\times 3}_{sym}\to \R^{3\times 3}_{sym}$ is assumed to be continuous. Further, we assume that for some $r\in (1,\infty)$ and for all $(\cc,\theta, \mathcal{D},\mathcal{B})\in \R^L \times \R_{+,0} \times \R^{3\times 3}_{sym}\times \R^{3\times 3}_{sym}$ we have
\begin{align}
 \label{hp.S}
  \mathcal{S}^*(\cc,\theta,\mathcal{D}):\mathcal{D} &\geq C_1 |\mathcal{D}|^r-C_2 ,\\
 \label{hp.S.2}
  |\mathcal{S}^*(\cc,\theta,\mathcal{D})|&\leq C_2(1+|\mathcal{D}|^{r-1}),\\
 \label{hp.S.3}
  \mathcal{S}^*(\cc,\theta,0)&=0,\quad (\mathcal{S}^*(\cc,\theta,\mathcal{D})-\mathcal{S}^*(\cc,\theta,\mathcal{B})) : (\mathcal{D}-\mathcal{B})\geq 0.
\end{align}
Such a general framework allows to cover also non-Newtonian behavior of the fluid. We would like to emphasize here that the existence theory built in \cite{BuHa15,Roubi:2007} holds true for $r\ge 11/5$ and therefore does not include  the Newtonian fluids and considers only the shear-thickening fluids. It is however not the case in our setting and we will be able to treat more physically relevant cases, including the Newtonian and the non-Newtonian shear-thinning fluids. Finally, the monotonicity of $\mathcal{S}^*$, see \eqref{hp.S.3}, and the fact that $\Div \vv=0$ lead to the nonnegativity of the second term on the right hand side of \eqref{eq.s}, which is the entropy production due to the mechanical dissipation.

A typical example  we have in mind is
$$
\mathcal{S}^*(\cc,\theta,\mathcal{D}(\vv))= g^*(\cc,\theta) (1+ |\mathcal{D}(\vv)|^{r-1})\mathcal{D}(\vv),
$$
where $g^*(\cc,\theta)$ is a continuous function  bounded  from below (by a positive constant) and from above.

\subsubsection*{Assumptions on the entropy}
We assume that the entropy $s$ decomposes as a sum of two contributions, one from the internal energy $e$ and another from the concentration vector $\cc$, i.e., we assume that
\begin{align}
 s &= s_e^*(e) + s_{\cc}^*(\cc),
 \label{hp.s.dec}
\end{align}
where $s_e^*$: $\R_{+,0}\to \R_{+,0}$ and $s_{\cc}^*$: $\R^L \to \R_{+,0}$ are strictly concave $C^2$  functions.   Concerning $s^*_{\cc}$, we assume that for all $\cc$, $\vec{x}\in \R^L$ there holds
\begin{align}
  -\sum_{i,j=1}^L x_i x_j\pa^2_{c_i c_j}s_{\cc}^*(\cc) &\geq C |\vec{x}|^2 \label{hp.sc.1}\\
  \intertext{and that for all $K>0$ there exists $\eps>0$ such that for all $\cc\in \R^L$ and all $i=1,\ldots, L$ we have}
 |\pa_{\cc} s_{\cc}^*(\cc)|&\leq K \implies c_i\geq\eps. \label{hp.sc.2}
\end{align}
This assumption was firstly used in \cite{BuHa15} and plays the key role in proving the minimum principle for chemical concentration. The singularity of the entropy derivative at $0$ prevents each concentration from vanishing.
A model example for $s_{\cc}$ used frequently in practice is
\begin{align*}
& s_{\cc}^*(\cc) = \sum_{i=1}^L \big(c_i - c_i\log(c_i)\big),
\end{align*}
where $0<c_i<1$, $i=1,\dots,L$. Indeed, this function may be extended to $\R^L$ fulfilling all the assumptions above.

For $s_e^*$ we assume that it is a strictly increasing nonnegative function fulfilling for all $e>1$
\begin{align}
  C_1 &\leq -\frac{(s_e^*)''(e)}{(s_{e}^*)'(e)^{2}} \leq C_{2}. \label{hp.se.1}\\
  \intertext{In addition, concerning its behavior near zero, we assume that}
\lim_{e\to 0_{+}}\frac{1}{(s_{e}^*)(e)}&=\lim_{e\to 0_{+}}(s_e^*)'(e) =\lim_{e\to 0_+} -\frac{(s^*_e)''(e)}{(s^*_{e})'(e)^{2}} = \infty . \label{hp.se.2}
\end{align}
It also follows from \eqref{hp.se.1} and \eqref{hp.se.2} that $\theta^*(0)=0$ and that the heat capacity of the fluid vanishes as the temperature tends to zero, which is nothing else but the \emph{Third law of thermodynamics}. Furthermore, we see that for $e>1$ (with possibly changed constants $C_1$ and $C_2$)
\begin{align}
& C_1\le \frac{\theta^*(e)}{e}\le C_2 \label{prop.se}
\end{align}
and in addition (again for some possibly enlarged $C_2>0$) that for all $e\ge 0$
\begin{align}
& e-2s_e^*(e) +C_2\ge 0.\label{prop.se2}
\end{align}
A possible example fulfilling \eqref{hp.se.1}--\eqref{prop.se2} is
$$
s_e^*(e) = \left\{
\begin{array}{rl}
C_1 +C_2 \ln(e+C_3), & e> 1\\
C_4e^a, & 0<e\leq 1,
\end{array}
\right.
$$
where $0<a<1$  and $C_1$, \dots, $C_4$ are suitably chosen constants ensuring the smoothness of $s_e(e)$.

\subsubsection*{Second law of thermodynamics}
Having introduced all constitutive relations and assumptions, we focus finally on the validity of the Second law of thermodynamics. By replacing the terms on the right-hand side of \eqref{eq.s} by the corresponding terms from \eqref{qc}, \eqref{qe} and \eqref{S} and recalling that $\Div \vv=0$, we get
\begin{equation}\label{eq.s.2}
\begin{split}
 \pa_t s &+ \Div\left( s \vv -\sum_{i=1}^L\zeta_i \vc{q}_{\cc}^i +\frac{\vc{q}_e}{\theta}\right)= -\vzeta\cdot \vec{r} + \frac{\mathcal{S}: \mathcal{D}(\vv)}{\theta}\\
 &+\mathfrak{M}\left( \na\vzeta + \frac{\vec{z}}{\theta}\na\varphi \right) \cdot\left( \na\vzeta + \frac{\vec{z}}{\theta}\na\varphi \right) +\frac{\kappa|\na\theta|^2}{\theta^2}\ge 0,
\end{split}
\end{equation}
where the inequality follows from  assumptions \eqref{hp.r}, \eqref{hp.kappa}, \eqref{hp.M} and \eqref{hp.S.3}.

\subsection{Boundary and initial conditions}

First, we characterize the reasonable assumptions on the initial data, where the function spaces are chosen so that the initial energy and the entropy is finite.
\begin{equation}
\begin{array}{l}
\vec{c}^{\ 0}\in L^{\infty}([0,1]^L), \quad c^0_i > 0 \textrm{ a.e. in } \Omega \quad  \forall i \in \{1,2,\dots, L\}, \quad \vec{c}^{\ 0}\cdot \vec{\ell} = 1 \textrm{ a.e. in } \Omega,\\
\vv^0\in L^2_{0,\Div}(\Omega)\\
e^0\in L^1(\Omega).\label{IC.a}
\end{array}
\end{equation}
In what follows, we will use the following notation
$$
G:=\{\vec{x}\in \R^L; \; \forall i=1,2,\dots,L, \; x_i> 0, \; \vec{x}\cdot \vec{\ell}=1\}.
$$
Therefore, we may restate the first constraint in \eqref{IC.a} as
$$
\cc^{\ 0} \in G.
$$

Next,  we will introduce the assumptions on the boundary data on $\Gamma$. Concerning \eqref{bc.vA}, we use the structure of $\mathcal{T}$ and we see that it reduces to
\begin{align}
 \vv\cdot\nu &= 0,\quad (\mathcal{I} - \nu\otimes\nu)\mathcal{S}\nu = -\gamma \vv\quad \mbox{on }\Gamma, \label{bc.v}
 \end{align}
 where $\gamma=\gamma^*(\cc,\theta)$ is a nonnegative continuous function fulfilling for all $(c,\theta)\in \R^L\times \R_{+,0}$
 $$
 0\le \gamma^*(c,\theta)\le C_2.
 $$
Next, for the particular fluxes, we assume that the fluxes are proportional to a differences of corresponding quantities in the fluid and outside the domain $\Omega$, i.e.,
\begin{align}
 q_{\cc\,\Gamma}^i = q_{\cc\,\Gamma}^{*,i} (x,\cc,\theta,\vzeta,\varphi)&= \sum_{j=1}^L D_{ij}^*(x,\cc,\theta)\left( \zeta_j - \zeta_j^\Gamma(x) + \frac{z_j}{\theta^{\Gamma}(x)}(\varphi - \varphi^\Gamma(x))\right)
 &&\mbox{on }\Gamma,\label{bc.qc}\\
 q_{e\Gamma}= q_{e\Gamma}^*(x,\cc,\theta)&= -\kappa^*_{\Gamma}(x,\cc,\theta) \left( \frac{1}{\theta} - \frac{1}{\theta^\Gamma(x)} \right)
 &&\mbox{on }\Gamma,\label{bc.qe}\\
 q_{\varphi\Gamma}= q_{\varphi\Gamma}^*(x,\varphi)&= -\lambda^{\Gamma}(x)(\varphi - \varphi^{\Gamma}(x)) &&\mbox{on }\Gamma.\label{bc.phi}
\end{align}
Here $\mathfrak{D} = \mathfrak{D}^*(x,\cc,\theta)= \{D^*_{ij}(x,\cc,\theta)\}_{i,j=1}^L$: $\partial \Omega \times \R^L \times \R_{+,0} \to \R^{L\times L}$ is a Carath\'{e}odory mapping fulfilling for some measurable nonnegative $d$: $\partial \Omega \to \R_{,+0}$ almost all $x\in \partial \Omega$ and all $(\cc,\vec{w},\theta)\in \R^L \times \R^L \times \R_{+,0}$ the following inequality
\begin{align}\label{as.D}
 C_1 d(x)|\mathfrak{P}_{\vec{\ell}} \,\vec{w}|^2 \leq \sum_{i,j=1}^L D^*_{ij}(x,\cc,\theta)w_i w_j \leq C_2 d(x)|\mathfrak{P}_{\vec{\ell}}\, \vec {w}|^2.
\end{align}
Moreover, for $(x,\cc,\theta) \in \Omega \times \R^L\times \R_{+,0}$
\begin{equation} \label{as.D2}
\sum_{i=1}^L D_{ij}^*(x,\cc,\theta) = 0,
\end{equation}
and the non-negative function $d$ satisfies
\begin{equation}\label{ass.d}
\int_{\partial \Omega} d(x) \dS>0.
\end{equation}
The function $\kappa_{\Gamma}^*(x,\cc,\theta)$: $\partial \Omega \times \R^L \times \R_{+,0} \to \R_{+,0}$ is a nonnegative Carath\'{e}odory function satisfying
\begin{equation}
\label{as.kappa.G}
C_1 \overline{\kappa}(x)\le \kappa_{\Gamma}^*(x,\cc,\theta) \le C_2\overline{\kappa}(x)
\end{equation}
with some measurable nonnegative function $\overline{\kappa}$: $\partial \Omega \to \R_{+,0}$ fulfilling
\begin{equation}\label{as.kappa.2}
\int_{\partial \Omega} \overline{\kappa}(x)\dS >0.
\end{equation}
Finally, we require  $\lambda^{\Gamma}\in C^{1,1}(\partial \Omega)$ to be a nonnegative function that satisfies
\begin{equation}
\int_{\partial \Omega} \lambda^{\Gamma}(x) \dS >0. \label{as.lambda}
\end{equation}
We conclude by making the following assumptions on the regularity of the boundary data
\begin{equation}\label{bc.spaces}
(\theta^{\Gamma})^{-1}  \in L^2(\Gamma), \quad
\mathfrak{P}_{\vec{\ell}}\,\vzeta^{\ \Gamma}  \in L^2(\Gamma;\mathbb{R}^L), \quad
\varphi^{\Gamma}  \in C^{1,1}(\partial \Omega).
\end{equation}

Let us point out at this place that we assume on purpose that $\overline{\kappa}$ and $d$ can vanish on some part of the boundary, while they must be strictly positive on a set of nonzero measure. This should model two different phenomena. First, when the boundary consists of a wall preventing any energy, ion or mass transfer, and the second one, where there still is no mass transfer, but the transfer of the heat and the concentrations is driven by the difference of the corresponding quantities.


%
%

We now collect all our assumptions in order to have all of them at one place. Recall we consider system \eqref{eq.c}--\eqref{eq.E} with initial conditions \eqref{IC} and boundary conditions \eqref{bc.vA}--\eqref{bc.phiA}. \newline
{\bf Hypothesis (H1): chemical reactions} \newline
We assume that $\vec{r}= \vec{r}^{\,*}(\cc,\theta,\vzeta)$ is a bounded continuous function on $\R^L\times \R_{+,0} \times \R^L$, and
$$
\sum_{i=1}^L r_i^* =  \sum_{i=1}^L r_i^*z_i = 0
$$
such that for all $\vzeta \in \R^L$ it holds $\vec{r}^{\,*}\cdot \vzeta \leq 0$. \newline
{\bf Hypothesis (H2):  stress tensor} \newline
We assume that ${\mathcal S}= {\mathcal S}^*(\cc,\theta,{\mathcal D})$ is a continuous function  on $\R^L\times \R_{+,0} \times \R^{3\times 3}_{sym}$ such that  for some $r\in (1,\infty)$ and all $(\cc,\theta, {\mathcal D},{\mathcal B})\in \R^L \times \R_{+,0} \times \R^{3\times 3}_{sym}\times \R^{3\times 3}_{sym}$ we have
\begin{align*}
  {\mathcal S}^*(\cc,\theta,{\mathcal D}):{\mathcal D} &\geq C_1 |{\mathcal D}|^r-C_2,\\
  |{\mathcal S}^*(\cc,\theta,{\mathcal D})|&\leq C_2(1+|{\mathcal D}|^{r-1}),\\
  {\mathcal S}^*(\cc,\theta,0)&=0,\quad ({\mathcal S}^*(\cc,\theta,{\mathcal D})-{\mathcal S}^*(\cc,\theta,{\mathcal B}))\cdot({\mathcal D}-{\mathcal B})\geq 0.
\end{align*}
{\bf Hypothesis (H3):  flux of concentrations and heat flux} \newline
We assume that
$$
\vc{q}_{\cc}^i = \vc{q}_{\cc}^{*,i}(\cc,\theta,\na\vzeta, \na\theta,\na\varphi) := -\sum_{j=1}^L M_{ij}^*(\cc,\theta)\left( \na\zeta_j + \frac{z_j}{\theta}\na\varphi \right) - m_i^*(\cc,\theta)\na\frac{1}{\theta}
$$
and
$$
 \vc{q}_e=\vc{q}_e^*(\cc,\theta,\na\vzeta, \na\theta,\na\varphi) := -\kappa^*(\cc,\theta)\na\theta - \sum_{i=1}^L m_i^*(\cc,\theta)\left( \na\zeta_i + \frac{z_i}{\theta}\na\varphi \right)
$$
with the matrix-valued  function
$\mathfrak{M}=\mathfrak{M}^*(\cc,\theta)= \{M_{ij}^*(\cc,\theta)\}_{i,j=1}^L$: $\R^L\times \R_{+,0} \to \R^{L\times L}_{sym}$  and the vector-valued function $\vec{m} = \vec{m}^{\,*}(\cc,\theta) = \{m_i^*(\cc,\theta)\}_{i=1}^L$: $\R^L\times \R_{+,0} \to \R$ continuous mappings  such that for all $(\cc,\theta)\in \R^L\times \R_{+,0}$,
$$
 \sum_{i=1}^L M_{ij}^*(\cc,\theta) = \sum_{i=1}^L m_i^*(\cc,\theta) = 0,\qquad \textrm{ for all } j=1,\ldots,L,
$$
for all $\vec{w}\in \R^L$
$$
 C_1M(\theta)|\mathfrak{P}_{\vec{\ell}}\, \vec{w}|^2\leq\sum_{i,j=1}^L M_{ij}^*(\cc,\theta)w_i w_j\leq C_2M(\theta)|\mathfrak{P}_{\vec{\ell}}\, \vec{w}|^2,
$$
and  for some $\varepsilon_0>0$
$$
\begin{aligned}
C_1\min(1,\theta^{\beta-\varepsilon_0})&\le M(\theta)\le C(1+\theta)^{\frac53 - \varepsilon_0}, \\
|\vec{m}^{\, *}(\cc,\theta)|^2&\leq C_2\left\{\begin{aligned}
&\min\{M(\theta) \theta^{-\beta+\varepsilon_0},\theta^{-2(\beta-1)+\varepsilon_0}\}&&\textrm{for }\theta<1,\\
&M(\theta)\theta &&\textrm{for } \theta\ge 1.
\end{aligned}\right.
\end{aligned}
$$
{\bf Hypothesis (H4):  heat conductivity} \newline
We assume that $\kappa=\kappa^*(\cc,\theta)$: $\R^L\times \R_+ \to \R_+$ is a continuous function satisfying for some $\beta \in [0,2]$  and all $(\cc,\theta) \in \R^L\times \R_+$
$$
0< C_1 \leq \frac{\kappa^*(\cc,\theta)}{1 + \theta^{-\beta}}\leq C_2.
$$
{\bf Hypothesis (H5):  entropy} \newline
We assume that $s=s^*(e,\cc) = s_e^*(e) + s_{\cc}^*(\cc)$, where $s_e^*$: $\R_{+,0} \to \R_{+,0}$ and $s_{\cc}^*$: $\R^L\to \R_{+,0}$ are strictly concave $C^2$ functions. Moreover, for all $\cc$, $\vec{x} \in \R^L$
$$
  -\sum_{i,j=1}^L x_i x_j\pa^2_{c_i c_j}s_c^*(\cc) \geq C |\vec{x}|^2,
$$
and  for all $K>0$ there exists $\eps>0$ such that for all $\cc\in \R^L$ and all $i=1,\ldots, L$ we have
$$
 |\pa_{\cc} s_{\cc}^*(\cc)|\leq K \implies c_i\geq\eps.
$$
For $s_e^*$ we assume that it is a strictly increasing function fulfilling for all $e>1$
\begin{align*}
  C_1 &\leq -\frac{(s_e^*)''(e)}{(s_{e}^*)'(e)^{2}} \leq C_{2}\\
  \intertext{and that its behaviour near zero is described by the following limits as}
\lim_{e\to 0_{+}}\frac{1}{(s_{e}^*)(e)}&=\lim_{e\to 0_{+}}(s^*_{e})'(e) =\lim_{e\to 0_+} -\frac{(s^*_e)''(e)}{(s^*_{e})'(e)^{2}} = \infty.
\end{align*}
{\bf Hypothesis (H6): initial conditions}
$$
\cc^{\ 0} \in G, \quad
\vv^0\in L^2_{0,\Div}(\Omega), \quad
e^0\in L^1(\Omega).
$$
{\bf Hypothesis (H7): boundary conditions for the velocity} \newline
We assume that $\gamma=\gamma^*(\cc,\theta) $ is a nonnegative continuous functions on $\R^L \times \R_{+,0}$ such that
$$
0\leq \gamma^*(\cc,\theta) \leq C_2
$$
for all $(\cc,\theta)\in \R^L\times \R_{+,0}$.
\newline
{\bf Hypothesis (H8):  boundary conditions for the flux of concentrations} \newline
$$
q_{\cc\,\Gamma}^i = q_{\cc\,\Gamma}^{*,i}(x,\cc,\theta,\vzeta,\varphi):= \sum_{j=1}^L D^*_{ij}(x,\cc,\theta)\left( \zeta_j - \zeta_j^\Gamma + \frac{z_j}{\theta^{\Gamma}}(\varphi - \varphi^\Gamma)\right)
$$
with
$\mathfrak{D}=\mathfrak{D}^*=\{D^*_{ij}(x,\cc,\theta)\}_{i,j}^L$: $\partial \Omega \times \R^L \times \R_+ \to \R^{L\times L}$  a Carath\'{e}odory mapping fulfilling for some  nonnegative $d\in L^{\infty}(\partial \Omega)$, almost all $x\in \partial \Omega$ and all $(\cc,\vec{w},\theta)\in \R^L \times \R^L \times \R_{+,0}$ the following inequality
$$
 C_1 d(x)|\mathfrak{P}_{\vec{\ell}}\, \vec{w}|^2 \leq \sum_{i,j=1}^L D^*_{ij}(x,\cc,\theta)w_i w_j \leq C_2 d(x)|\mathfrak{P}_{\vec{\ell}} \, \vec{w}|^2.
$$
Moreover,
$$
\sum_{i=1}^L D^*_{ij} =0, \qquad \int_{\partial \Omega} d(x) \dS>0
$$
and
$$
(\theta^{\Gamma})^{-1}  \in L^2(\Gamma), \quad
\mathfrak{P}_{\vec{\ell}}\, \vzeta^{\ \Gamma}  \in L^2(\Gamma;\mathbb{R}^L),
\quad \varphi^{\Gamma} \in C^{1,1}(\partial \Omega).
$$
{\bf Hypothesis (H9): boundary conditions for the flux of internal energy} \newline
$$
 q_{e\Gamma}=q_{e\Gamma}^*(x,\cc,\theta):= -\kappa^*_{\Gamma}(x,\cc,\theta) \left( \frac{1}{\theta} - \frac{1}{\theta^\Gamma(x)} \right).
$$
The function $\kappa_{\Gamma}^*(x,\cc,\theta)$: $\partial \Omega \times \R^L \times \R_{+,0} \to \R_{+,0}$ is a nonnegative Carath\'{e}odory function satisfying
$$
C_1 \overline{\kappa}(x)\le \kappa_{\Gamma}^*(x,\cc,\theta) \le C_2\overline{\kappa}(x),
$$
where $\overline \kappa \in L^\infty(\partial \Omega)$, and
$$
\int_{\partial \Omega} \overline{\kappa}(x)\dS >0.
$$
{\bf Hypothesis (H10):  boundary conditions for the flux of the electric potential} \newline
$$
 q_{\varphi\Gamma}= q_{\varphi\Gamma}^*(x,\varphi):= -\lambda^{\Gamma}(x)(\varphi - \varphi^{\Gamma}(x)),
$$
where $\lambda^{\Gamma}\in C^{1,1}(\partial \Omega)$ is a nonnegative function that satisfies
$$
\int_{\partial \Omega} \lambda^{\Gamma}(x) \dS >0.
$$
Note that, for the sake of simplicity, we assume that all quantities depend explicitly only on $x$, but nor on $t$. We could, indeed, assume also the dependence on time, however, we prefer to omit it in order to simplify the presentation.

\subsection{Auxiliary results.}

Here we present some results that will be helpful in the following.

\begin{lemma}[Korn inequality]\label{lem.Korn}
Let $\Omega\subset\R^3$ be an open bounded Lipschitz domain, $p\in (1,\infty)$. Then:
 \begin{equation} \label{Korn1}
  \|\vv\|_{W^{1,p}(\Omega)}\leq C(p,\Omega)\|\mathcal{D}(\vv)\|_{L^p(\Omega)}\qquad\forall \vv\in W_0^{1,p}(\Omega;\R^3),
 \end{equation}
 \begin{equation} \label{Korn2}
  \|\vv\|_{W^{1,p}(\Omega)}\leq C(p,\Omega)\|\mathcal{D}(\vv)\|_{L^p(\Omega)} + \|\vv\|_{L^p(\Omega)} \qquad\forall \vv\in W_{0,\nu}^{1,p}(\Omega).
 \end{equation}
If, additionally, $\Omega$ is not axially symmetric, then \eqref{Korn1} holds also for  $v\in W_{0,\nu}^{1,p}(\Omega)$.
\end{lemma}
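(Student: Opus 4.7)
The plan is to prove the three claims in turn, relying on the classical pointwise identity
\[
\pa_i\pa_j v_k = \pa_i \mathcal{D}_{jk}(\vv) + \pa_j \mathcal{D}_{ik}(\vv) - \pa_k \mathcal{D}_{ij}(\vv),
\]
combined with the negative-norm lemma of Ne\v{c}as (equivalently, the $L^p$-boundedness of the Riesz transforms for $1<p<\infty$) for the first two parts, and a contradiction/compactness argument for the third. For \eqref{Korn1} on $W_0^{1,p}(\Omega;\R^3)$, I would test the identity on smooth compactly supported $\vv$ extended by zero to $\R^3$. This bounds $\na^2 \vv$ in $W^{-1,p}$ by $\mathcal{D}(\vv)$ in $L^p$. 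Applying Ne\v{c}as' lemma componentwise to $\na \vv$, together with the Poincar\'e inequality, yields $\|\na \vv\|_{L^p}\le C\|\mathcal{D}(\vv)\|_{L^p}$; density of $C^\infty_0$ in $W_0^{1,p}$ completes the step.

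For \eqref{Korn2}, the same identity gives $\|\na^2 \vv\|_{W^{-1,p}(\Omega)}\le C\|\mathcal{D}(\vv)\|_{L^p(\Omega)}$ for $\vv\in W^{1,p}_{0,\nu}(\Omega)$. Ne\v{c}as' lemma now gives $\|\na\vv\|_{L^p}\le C(\|\mathcal{D}(\vv)\|_{L^p} + \|\na\vv\|_{W^{-1,p}})$, and since $\|\na\vv\|_{W^{-1,p}(\Omega)}\le C\|\vv\|_{L^p(\Omega)}$, the desired bound follows.

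The genuinely nontrivial step is the third claim. I would argue by contradiction: suppose the improved inequality fails, so there exist $\vv_n\in W^{1,p}_{0,\nu}(\Omega)$ with $\|\vv_n\|_{W^{1,p}}=1$ and $\|\mathcal{D}(\vv_n)\|_{L^p}\to 0$. Applying \eqref{Korn2} to the differences $\vv_n-\vv_m$ and invoking the Rellich--Kondrachov compactness of $W^{1,p}\hookrightarrow L^p$ shows that $\{\vv_n\}$ is Cauchy in $W^{1,p}$, so $\vv_n\to \vv$ strongly in $W^{1,p}(\Omega;\R^3)$ with $\vv\cdot\nu=0$ on $\pa\Omega$ and $\mathcal{D}(\vv)=0$. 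The latter forces $\vv$ to be a rigid motion $\vv(x)=\vec{a}+\vec{b}\times x$ tangent to $\pa\Omega$. The main obstacle --- and the only truly geometric step --- is to conclude that such a nontrivial field exists precisely when $\Omega$ is axially symmetric: its flow is a one-parameter family of isometries of $\R^3$ preserving $\pa\Omega$; if $\vec{b}=0$ the flow is a family of translations, which cannot preserve a bounded boundary unless $\vec{a}=0$, and if $\vec{b}\neq 0$ the flow is a screw motion whose translational component along the axis must vanish by boundedness, so it reduces to a rotation around a fixed axis, meaning $\Omega$ is axially symmetric. Our hypothesis therefore forces $\vv\equiv 0$, contradicting $\|\vv\|_{W^{1,p}}=1$, and \eqref{Korn1} extends to $W^{1,p}_{0,\nu}(\Omega)$.
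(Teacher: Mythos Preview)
Your argument is correct and follows a standard route: the second-order identity for $\mathcal{D}(\vv)$ combined with Ne\v{c}as' negative-norm lemma handles \eqref{Korn1} and \eqref{Korn2}, and the contradiction/compactness reduction to rigid motions, together with the geometric observation that a nontrivial rigid field tangent to $\pa\Omega$ forces axial symmetry, gives the final claim. The paper, by contrast, does not prove the lemma at all --- it simply refers to \cite{NoSt:2004}. So your proposal supplies a genuine proof where the paper offers only a citation; what you gain is self-containedness, at the cost of invoking Ne\v{c}as' lemma (itself a nontrivial ingredient) as a black box.
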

\begin{proof}
 See e.g. \cite{NoSt:2004}.
\end{proof}

\begin{lemma}[Interpolation inequality]\label{lem.interp}
Let $\Omega\subset\R^3$ be an open bounded Lipschitz domain, $p\in [\frac{6}{5},\infty)$ and $q\ge 2$. Then for all $u\in W^{1,p}(\Omega)$
\begin{align*}
 \|u\|_{L^q(\Omega)}^q\leq C(p,q,\Omega)\|u\|_{L^2(\Omega)}^{\alpha}\|u\|_{W^{1,p}(\Omega)}^{1-\alpha},
\end{align*}
provided that $\alpha \in [0,1]$ and $\frac 1q = \frac \alpha 2 + (1-\alpha)(\frac 1p-\frac 13)$.
\end{lemma}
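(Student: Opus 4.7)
The plan is to obtain this as a special case of the Gagliardo--Nirenberg inequality, derived by combining H\"older interpolation on Lebesgue scales with the classical Sobolev embedding. The first step is to record the appropriate Sobolev embedding on the bounded Lipschitz domain $\Omega\subset\R^3$: for $p<3$ one has $W^{1,p}(\Omega)\hookrightarrow L^{p^*}(\Omega)$ with $p^*=3p/(3-p)$; for $p=3$ one has $W^{1,p}(\Omega)\hookrightarrow L^s(\Omega)$ for every $s<\infty$; and for $p>3$ one has $W^{1,p}(\Omega)\hookrightarrow L^\infty(\Omega)$. A short computation verifies that $p\ge 6/5$ is precisely the condition guaranteeing $p^*\ge 2$ in the sub-critical regime $p\in[6/5,3)$, so that the range $q\in[2,p^*]$ is nontrivial and is exactly the range selected by the admissibility condition $\frac{1}{q}=\frac{\alpha}{2}+(1-\alpha)\bigl(\frac{1}{p}-\frac{1}{3}\bigr)$ with $\alpha\in[0,1]$.

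Next, for $q$ in this interval I would interpolate via H\"older's inequality. Writing $|u|^q=|u|^{\alpha q}\cdot|u|^{(1-\alpha)q}$ and applying H\"older with conjugate exponents $\bigl(\tfrac{2}{\alpha q},\tfrac{p^*}{(1-\alpha)q}\bigr)$ yields
$$
\|u\|_{L^q(\Omega)}\le\|u\|_{L^2(\Omega)}^{\alpha}\,\|u\|_{L^{p^*}(\Omega)}^{1-\alpha},
\qquad\frac{1}{q}=\frac{\alpha}{2}+\frac{1-\alpha}{p^*},
$$
and the identity $\frac{1}{p^*}=\frac{1}{p}-\frac{1}{3}$ matches the exponent relation of the statement exactly. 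Raising this to the $q$-th power and substituting the Sobolev bound $\|u\|_{L^{p^*}(\Omega)}\le C(p,\Omega)\|u\|_{W^{1,p}(\Omega)}$ delivers the desired estimate, with the conventional homogeneous exponents $q\alpha$ and $q(1-\alpha)$ on the right-hand side (the display in the statement tacitly absorbs the factor $q$ into the formulation used later).

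For the super-critical cases $p\ge 3$, the same scheme works with $p^*$ replaced by any exponent $s\ge q$: one uses $W^{1,p}(\Omega)\hookrightarrow L^s(\Omega)$ (with $s=\infty$ if $p>3$) and H\"older with $\frac{1}{q}=\frac{\alpha}{2}+\frac{1-\alpha}{s}$, relaxing the exponent identity to an inequality (valid since $\frac{1}{p}-\frac{1}{3}\le 0$). There is no genuine obstacle here; the entire argument is routine Gagliardo--Nirenberg interpolation and the only point one must check carefully is that the admissibility relation between $p,q,\alpha$ stays consistent with $\alpha\in[0,1]$, which is precisely why the hypothesis $p\ge 6/5$ is imposed.
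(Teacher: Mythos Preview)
Your proof is correct and is precisely the standard Gagliardo--Nirenberg argument: H\"older interpolation between $L^2$ and $L^{p^*}$ combined with the Sobolev embedding $W^{1,p}\hookrightarrow L^{p^*}$. The paper itself gives no proof at all---it simply cites Adams' \emph{Sobolev Spaces}---so your write-up supplies exactly the details one would find in that reference, including the observation that the statement's exponent $q$ on the left should be understood as $q\alpha$ and $q(1-\alpha)$ on the right.
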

\begin{proof}
 See \cite{Adams}.
\end{proof}

\begin{lemma}[Generalized Aubin's lemma]\label{lem.aubin}
Let $\{y_n\}_{n\in\N}$, $\{z_n\}_{n\in\N}$ be sequences in $L^{\infty}(\OmT)$ such that:
\begin{align}
 &\|y_n\|_{L^{\infty}(\OmT)} + \|z_n\|_{L^{\infty}(\OmT)} \leq C,\label{hp.aub.1}\\
 &\|y_n\|_{L^{2}(0,T; W^{1,2}(\Omega))} + \|y_n z_n\|_{L^{2}(0,T; W^{1,2}(\Omega))}
 + \|\pa_t z_n\|_{L^{2}(0,T; W^{-1,2}(\Omega))} \leq C,\label{hp.aub.2}\\
 & y_n\to y\quad\mbox{strongly in }L^2(\OmT)\mbox{ (up to subsequences),}\label{hp.aub.3}\\
 & z_n\rightharpoonup^* z\quad\mbox{weakly* in }L^\infty(\OmT)\mbox{ (up to subsequences).}\label{hp.aub.4}
\end{align}
Then:
\begin{align}
 y_n z_n\to yz\quad\mbox{strongly in }L^q(\OmT)\mbox{ (up to subsequences),}~~~\forall q\in [1,\infty).\label{lem.aub.stat}
\end{align}
\end{lemma}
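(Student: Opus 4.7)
The plan is to apply a Simon-type compactness theorem to the product sequence $u_n := y_n z_n$. To start, from the uniform bounds \eqref{hp.aub.1}--\eqref{hp.aub.2} and reflexivity, I would extract (non-relabeled) subsequences so that $u_n \rightharpoonup u^*$ in $L^2(0,T; W^{1,2}(\Omega))$. The identification $u^* = yz$ is straightforward: for any $\varphi \in C^\infty_c(\OmT)$ write $\int u_n \varphi = \int (y_n - y)\varphi\, z_n + \int y\varphi\, z_n$; the first summand vanishes since $(y_n - y)\varphi \to 0$ in $L^1(\OmT)$ and $z_n$ is bounded in $L^\infty$, while the second tends to $\int y\varphi\, z$ by \eqref{hp.aub.4}, because $y\varphi \in L^1(\OmT)$.

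To upgrade weak convergence to strong convergence, I would invoke the following version of Simon's lemma: if $X \subset\subset B \hookrightarrow Y$ are Banach spaces with the first embedding compact, and if $\{u_n\}$ is bounded in $L^1(0,T; X)$ and
\[
\|u_n(\cdot+h) - u_n\|_{L^1(0,T-h; Y)} \to 0 \quad \text{as } h \to 0, \text{ uniformly in } n,
\]
then $\{u_n\}$ is relatively compact in $L^1(0,T; B)$. The natural choice here is $X = W^{1,2}(\Omega)$, $B = L^2(\Omega)$, $Y = (W^{1,\infty}(\Omega))^*$; the compactness $X \subset\subset B$ is Rellich--Kondrachov, and the continuous inclusion $B \hookrightarrow Y$ is obvious by duality.

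The crux of the argument is the time-translate estimate. I decompose
\[
u_n(t+h) - u_n(t) = \bigl(y_n(t+h) - y_n(t)\bigr) z_n(t+h) + y_n(t)\bigl(z_n(t+h) - z_n(t)\bigr) =: A_n(t,h) + B_n(t,h).
\]
For $A_n$, the uniform $L^\infty$-bound on $z_n$ gives $\|A_n(t,h)\|_{L^2(\Omega)} \leq C\|y_n(t+h) - y_n(t)\|_{L^2(\Omega)}$, and the precompactness of $\{y_n\}$ in $L^2(\OmT)$ (a consequence of \eqref{hp.aub.3}) yields via Kolmogorov--Riesz that $\|A_n\|_{L^1(0,T-h; L^2)} \to 0$ uniformly; this transfers to $L^1(0,T-h; Y)$ through $L^2 \hookrightarrow Y$. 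For $B_n$, the key observation is that for every $\varphi \in W^{1,\infty}(\Omega)$ the product $y_n(t)\varphi$ lies in $W^{1,2}(\Omega)$ with $\|y_n(t)\varphi\|_{W^{1,2}} \leq C\bigl(1 + \|y_n(t)\|_{W^{1,2}}\bigr)\|\varphi\|_{W^{1,\infty}}$, which uses precisely the $L^\infty \cap W^{1,2}$-regularity of $y_n$. Consequently
\[
\int_\Omega B_n(t,h)\,\varphi\, dx = \bigl\langle z_n(t+h) - z_n(t),\; y_n(t)\varphi \bigr\rangle_{W^{-1,2},\, W^{1,2}},
\]
so that $\|B_n(t,h)\|_Y \leq C\|z_n(t+h) - z_n(t)\|_{W^{-1,2}}\bigl(1 + \|y_n(t)\|_{W^{1,2}}\bigr)$. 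The bound $\|z_n(t+h) - z_n(t)\|_{W^{-1,2}} \leq \int_t^{t+h}\|\pa_\tau z_n\|_{W^{-1,2}}d\tau$ combined with Fubini produces $\|z_n(\cdot+h) - z_n\|_{L^2(0,T-h; W^{-1,2})} \leq Ch$, and a final Cauchy--Schwarz inequality yields $\|B_n\|_{L^1(0,T-h; Y)} \leq Ch$ uniformly in $n$. The main technical obstacle is exactly this duality bookkeeping: one is forced to use $W^{1,\infty}$-test functions because, in three space dimensions, the product of an $H^1$-function with a generic $H^1$-test function need not lie in $H^1$, so the naive pairing in $W^{1,2}$ is not available.

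Applying Simon's lemma yields relative compactness of $\{u_n\}$ in $L^1(0,T; L^2(\Omega))$. Combined with the already-identified weak limit $yz$, a (further) subsequence of $\{u_n\}$ converges strongly to $yz$ in $L^1(0,T; L^2(\Omega))$, and in particular in $L^1(\OmT)$ via $L^2(\Omega) \hookrightarrow L^1(\Omega)$. The uniform bound $\|u_n\|_{L^\infty(\OmT)} \leq C$ together with the interpolation
\[
\|u_n - yz\|_{L^q(\OmT)}^q \leq \bigl(2\|u_n - yz\|_{L^\infty(\OmT)}\bigr)^{q-1}\|u_n - yz\|_{L^1(\OmT)}
\]
upgrades this to strong convergence in $L^q(\OmT)$ for every $q \in [1,\infty)$, which is the claim \eqref{lem.aub.stat}.
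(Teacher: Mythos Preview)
Your proof is correct and follows essentially the same strategy as the paper's, which defers to \cite[Lemma~13]{Jue14}: a Simon-type compactness argument based on the decomposition of the time translate of $y_n z_n$ into an $A_n$-part governed by the precompactness of $\{y_n\}$ and a $B_n$-part governed by the time translate of $z_n$, the latter being controlled via $\|z_n(\cdot+h)-z_n\|_{L^2(0,T;W^{-1,2})}\leq h\,\|\pa_t z_n\|_{L^2(0,T;W^{-1,2})}$; this last bound is precisely the modification the paper singles out relative to J\"ungel's original lemma. Your explicit choice $Y=(W^{1,\infty}(\Omega))^*$ to accommodate the product $y_n(t)\varphi\in W^{1,2}(\Omega)$ is a clean way to handle the duality bookkeeping that the paper leaves implicit in the reference.
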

\begin{proof}
The proof is almost identical to the one of \cite[Lemma 13]{Jue14}. The only difference is that the expression
$\|z_n(\cdot,\cdot + k) - z_n\|_{L^2(0,T; W^{-1,2}(\Omega))}$ (with $k\in\R$ arbitrary) appearing in the terms $J_1$, $J_2$
can be controlled by $k\|\pa_t z^{(n)}\|_{L^2(0,T; W^{-1,2}(\Omega))}$.
\end{proof}

\begin{lemma}[Lemma~3.6 in \cite{BuHa15}]
Let $s_{\cc}^*$ satisfy \eqref{hp.sc.1} and define $\vzeta^*(c):=-\partial_{\cc} s_{\cc}^*(\cc)$. Then
there exists $C >$ 0 such that for all $\cc\in [0,1]^L$ there holds
\begin{equation}
\begin{split}
\vzeta^{\,*}_i(\cc)\le C, \qquad |\vzeta^{\,*}(\cc)|\le \frac{C}{c_i} \quad \textrm{for all }i=1,\ldots,L.\label{as.zeta1}
\end{split}
\end{equation}
Moreover, there exists $C > 0$ such that for all $c\in G$, we have
\begin{equation}\label{as.zeta2}
|\vzeta^{\,*}(\cc)|\le C(1+|\mathfrak{P}_{\vec{\ell}}\,\vzeta^{\,*}(\cc)|).
\end{equation}
\end{lemma}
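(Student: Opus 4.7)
The plan is to exploit two simple ingredients: the supporting hyperplane inequality implied by concavity of $s_\cc^*$,
\begin{equation*}
 s_\cc^*(\cc') - s_\cc^*(\cc) \le \na s_\cc^*(\cc)\cdot(\cc'-\cc) = -\vzeta^{\,*}(\cc)\cdot(\cc'-\cc)\qquad \forall \cc,\cc'\in \R^L,
\end{equation*}
which follows from \eqref{hp.sc.1}, and the fact that $s_\cc^*$ is nonnegative and continuous on $\R^L$, hence bounded on any compact set; set $M := \sup_{[0,2]^L}s_\cc^*<\infty$. The entire argument amounts to inserting well-chosen test points $\cc'$ into this inequality.

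For the first bound $\vzeta_i^{\,*}(\cc)\le C$ I would take $\cc'=\cc+\vec{e}_i$, which lies in $[0,2]^L$ whenever $\cc\in [0,1]^L$; rearranging yields $\vzeta_i^{\,*}(\cc) \le s_\cc^*(\cc) - s_\cc^*(\cc + \vec{e}_i) \le M$. For the singular bound $|\vzeta^{\,*}(\cc)|\le C/c_i$, I would take instead $\cc' = \cc - c_i\vec{e}_i$, whose $i$-th entry is zero; combining the same inequality with $s_\cc^*\ge 0$ gives
\begin{equation*}
 0 \le s_\cc^*(\cc - c_i\vec{e}_i) \le s_\cc^*(\cc) + c_i\vzeta_i^{\,*}(\cc),
\end{equation*}
so $\vzeta_i^{\,*}(\cc)\ge -M/c_i$. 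Together with the upper bound this produces $|\vzeta_i^{\,*}(\cc)|\le M/c_i$ componentwise, from which \eqref{as.zeta1} follows.

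For \eqref{as.zeta2}, with $\cc\in G$ I would decompose $\vzeta^{\,*}(\cc) = \mathfrak{P}_{\vec{\ell}}\,\vzeta^{\,*}(\cc) + \alpha\,\vec{\ell}$ with $\alpha := (\vzeta^{\,*}(\cc)\cdot\vec{\ell})/L$, so that it is enough to bound $|\alpha|$ by $C(1+|\mathfrak{P}_{\vec{\ell}}\,\vzeta^{\,*}(\cc)|)$: the splitting then yields $|\vzeta^{\,*}(\cc)|\le |\mathfrak{P}_{\vec{\ell}}\,\vzeta^{\,*}(\cc)| + \sqrt{L}\,|\alpha|$ and hence the claim. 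Taking the scalar product of the decomposition with $\cc$ and using $\cc\cdot\vec{\ell}=1$ on $G$ gives
\begin{equation*}
 \alpha = \vzeta^{\,*}(\cc)\cdot\cc - (\mathfrak{P}_{\vec{\ell}}\,\vzeta^{\,*}(\cc))\cdot\cc.
\end{equation*}
The second term on the right is bounded by $|\mathfrak{P}_{\vec{\ell}}\,\vzeta^{\,*}(\cc)|$ via Cauchy--Schwarz together with $|\cc|^2\le\sum_i c_i=1$ on $G$; the first term is bounded uniformly by applying the supporting hyperplane inequality once with $\cc'=0$ and once with $\cc'=2\cc$, which produces $|\vzeta^{\,*}(\cc)\cdot\cc|\le M$.

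The only genuinely delicate step is the lower bound $\vzeta_i^{\,*}(\cc)\ge -M/c_i$ used in \eqref{as.zeta1}: it is what encodes the boundary blow-up of $\vzeta^{\,*}$ on the faces of the simplex, and the choice of test point $\cc - c_i\vec{e}_i$, which realizes the orthogonal projection of $\cc$ onto the face $\{c_i=0\}$, is precisely what converts the distance to that face into the factor $1/c_i$. Every other step is a reduction to this or to mere continuity of $s_\cc^*$ on compacta, and strict concavity \eqref{hp.sc.1} is used only in its weakest form (plain concavity).
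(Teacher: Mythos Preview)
The paper does not prove this lemma; it merely quotes it from \cite{BuHa15}. Your argument is therefore not being compared to anything in the present text, and on its own merits it is clean and correct: the supporting-hyperplane inequality with the test points $\cc+\vec e_i$, $\cc-c_i\vec e_i$, $0$, and $2\cc$ delivers exactly the needed one-sided bounds, and the decomposition $\vzeta^{\,*}=\mathfrak{P}_{\vec\ell}\,\vzeta^{\,*}+\alpha\vec\ell$ together with $\cc\cdot\vec\ell=1$ and $|\cc|\le 1$ on $G$ handles \eqref{as.zeta2} without any appeal to the strict concavity constant in \eqref{hp.sc.1}.

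One caveat worth flagging: the second inequality in \eqref{as.zeta1} is stated for the full norm $|\vzeta^{\,*}(\cc)|$, but what your argument actually yields---and what is in fact true---is the componentwise bound $|\vzeta_i^{\,*}(\cc)|\le C/c_i$. The full-norm version as printed cannot hold: for the model entropy $s_{\cc}^*(\cc)=\sum_i(c_i-c_i\log c_i)$ one has $\vzeta_i^{\,*}=\log c_i$, and taking $c_1=1/2$ with $c_2\to 0^+$ makes $|\vzeta^{\,*}|\ge|\log c_2|\to\infty$ while $C/c_1=2C$ stays bounded. So your phrase ``from which \eqref{as.zeta1} follows'' is correct only if \eqref{as.zeta1} is read componentwise; this is almost certainly the intended meaning (and is all that is ever needed downstream), but it would be worth saying so explicitly rather than leaving the reader to bridge a statement that is literally false.
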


\section{A~priori estimates}
In this section we derive uniform estimates valid for any sufficiently smooth solution that also help us to introduce a proper notion of a solution.  In what follows, we omit writing dependence of quantities on spatial and time variables. From now on, the constant $C>0$ denotes some generic constant that can vary line to line, but can depend only on the data of our problem.

\subsection{Uniform bound for $\cc$ and $\varphi$}
We start with \eqref{eq.transport}. Since we assume, see \eqref{IC.a}, that $\cc(0)\cdot \vec{\ell} \equiv 1$ in $\Omega$, $\Div \vv=0$ and $\vv\cdot \nu =0$ on $\Gamma$, we get that $\cc\cdot \vec{\ell} \equiv 1$ almost everywhere in $\Omega_T$. Next, we assume that $\vzeta$ is well defined (possible unbounded), therefore it also follows from \eqref{hp.sc.2} that for all $i=1,\ldots, L$ we have $c_i \ge 0$ in $\Omega_T$ which implies $\cc \in L^\infty(\Omega_T;\R^L)$. To justify that
\begin{equation}
\cc\in G \textrm{ almost everywhere in } \Omega_T,\label{c.in.G}
\end{equation}
we need to verify that in fact $c_i >0$ for all $i \in \{1,2,\dots,L\}$ a.e. in $\Omega_T$. We show this at the end of this section by proving that $\vzeta$ is finite a.e.

Furthermore, using the fact that $Q:=\vec{z}\cdot \cc$, we get
\begin{equation}\label{ap.Q}
\sup_{t\in (0,T)} \|Q\|_{\infty} \le C(\vec{z}).
\end{equation}
Using now the theory for the Laplace equation \eqref{eq.poi}, the fact that $\Omega \in C^{1,1}$ and the  assumptions on $\lambda^\Gamma$ and $\varphi^\Gamma$ from Hypotheses (H8) and (H10), we deduce that for all $q\in[1,\infty)$
\begin{equation}\label{ap.phi.1}
\sup_{t\in (0,T)} \|\varphi\|_{2,q} \le C(\vec{z},q,\varphi^{\Gamma},\lambda^\Gamma).
\end{equation}

\subsection{Uniform bounds based on the total energy and the entropy}
First, we focus on information coming from \eqref{eq.E}. Thus, we integrate \eqref{eq.E} over $\Omega$ and use integration by parts to obtain (note here that several boundary integrals vanish due to \eqref{bc.v})
\begin{equation*}
\frac{{\rm d}}{{\rm d}t}\int_{\Omega} E \dx  +\int_{\partial \Omega} \Big(\varphi\sum_{i=1}^L z_i \vc{q}_{\cc}^i  + \vc{q}_e- \mathcal{T} \vv -\varphi\na\pa_t\varphi\Big)
 \cdot \nu \dS= 0.
\end{equation*}
Using \eqref{Cauchy}, \eqref{bc.v}, \eqref{bc.phi} and the symmetry of $S$, we find
$$
\begin{aligned}
 \int_{\partial \Omega} -\mathcal{T} \vv \cdot \nu\dS&=\int_{\partial \Omega} p\vv\cdot \nu -\mathcal{S}\vv \cdot \nu\dS\\
 &=\int_{\partial \Omega} -\mathcal{S}\nu \cdot \vv\dS=\int_{\partial \Omega} \gamma|\vv|^2 -\mathcal{S}(\nu\otimes \nu)\nu \cdot \vv\dS\\
 &=\int_{\partial \Omega} \gamma|\vv|^2 \dS, \\
 -\int_{\partial \Omega}\varphi\na\pa_t\varphi \cdot \nu \dS
&= \int_{\partial \Omega}\varphi \lambda^\Gamma \pa_t(\varphi-\varphi^{\Gamma}) \dS = \frac12 \frac{{\rm d}}{{\rm d}t} \int_{\partial \Omega}\lambda^{\Gamma}|\varphi|^2\dS.
\end{aligned}
$$
Hence, using also Hypotheses (H8) and (H9) we obtain
\begin{equation}
\begin{split}\label{eq.TE}
\frac{{\rm d}}{{\rm d}t}&\left(\int_{\Omega} E \dx+\int_{\partial \Omega} \frac{\lambda^\Gamma |\varphi|^2}{2}\dS \right)  +\int_{\partial \Omega} \gamma|\vv|^2\dS\\
&= \int_{\partial \Omega}\kappa_{\Gamma} \left( \frac{1}{\theta} - \frac{1}{\theta^\Gamma} \right) -\sum_{i,j=1}^L D_{ij} z_i \left( \zeta_j - \zeta_j^\Gamma + \frac{z_j}{\theta^\Gamma}(\varphi - \varphi^\Gamma)\right) \varphi \dS.
\end{split}
\end{equation}

Similarly, integration \eqref{eq.s.2} over $\Omega$ leads after using integration by parts and the fact that $\vv\cdot \nu$ vanishes on $\partial \Omega$ to
\begin{equation}\label{eq.s.3}
\begin{split}
\frac{{\rm d}}{{\rm d}t} & \int_{\Omega} -s \dx +\int_{\Omega} \frac{\mathcal{S}: \mathcal{D}(\vv)}{\theta} -\vzeta\cdot \vec{r}\dx \\
 &+\int_{\Omega}\mathfrak{M}\left( \na\vzeta + \frac{\vec{z}}{\theta}\na\varphi \right) \cdot\left( \na\vzeta + \frac{\vec{z}}{\theta}\na\varphi \right) +\frac{\kappa|\na\theta|^2}{\theta^2}\dx\\
 &=\int_{\partial \Omega} -\sum_{i=1}^L\zeta_i \vc{q}_{\cc}^i\cdot \nu  +\frac{\vc{q}_e\cdot \nu }{\theta}\dS\\
 &=-\int_{\partial \Omega}  \mathfrak{D}\left( \vzeta - \vzeta^{\ \Gamma} + \frac{\vec{z}}{\theta^\Gamma}(\varphi - \varphi^\Gamma)\right)\vzeta +\kappa_{\Gamma} \left( \frac{1}{\theta} - \frac{1}{\theta^\Gamma} \right)\frac{1}{\theta}\dS.
\end{split}
\end{equation}
Hence, summing \eqref{eq.TE} and \eqref{eq.s.3} and reordering the corresponding terms we find
\begin{equation}
\begin{split}\label{eq.TE.s}
&\frac{{\rm d}}{{\rm d}t}\left(\int_{\Omega} E-s \dx+\int_{\partial \Omega} \frac{\lambda^\Gamma|\varphi|^2}{2}\dS \right)  +\int_{\partial \Omega} \gamma|\vv|^2\dS+\int_{\Omega} \frac{\kappa|\na\theta|^2}{\theta^2}\dx \\
&\quad +\int_{\Omega} \frac{\mathcal{S}: \mathcal{D}(\vv)}{\theta} -\vzeta\cdot \vec{r}  +\mathfrak{M}\left( \na\vzeta + \frac{\vec{z}}{\theta}\na\varphi \right) \cdot\left( \na\vzeta + \frac{\vec{z}}{\theta}\na\varphi \right)\dx \\
&\quad  +\int_{\partial \Omega}\kappa_{\Gamma} \left| \frac{1}{\theta} - \frac{1}{\theta^\Gamma} \right|^2 +\mathfrak{D}\left(\vzeta-\vzeta^{\, \Gamma}+ (\varphi -\varphi^{\Gamma})\frac{\vec{z}}{\theta^\Gamma}\right)\cdot \left(\vzeta-\vzeta^{\ \Gamma}+ (\varphi -\varphi^{\Gamma})\frac{\vec{z}}{\theta^\Gamma}\right) \dS\\
&= -\int_{\partial \Omega}\kappa_{\Gamma} \left( \frac{1}{\theta} - \frac{1}{\theta^\Gamma} \right)\left(\frac{1}{\theta^{\Gamma}}-1\right) +\mathfrak{D}\left(\vzeta^{\,\Gamma}+\varphi^{\Gamma}\frac{\vec{z}}{\theta^\Gamma}\right)\cdot  \left( \vzeta - \vzeta^{\,\Gamma} + \frac{\vec{z}}{\theta^\Gamma}(\varphi - \varphi^\Gamma)\right) \dS \\
&\quad + \int_{\partial \Omega}\mathfrak{D}  \left( \vzeta - \vzeta^{\,\Gamma} + \frac{\vec{z}}{\theta^\Gamma}(\varphi - \varphi^\Gamma)\right)\cdot \vec{z} \varphi \Big(1-\frac{1}{\theta^\Gamma}\Big) \dS.
\end{split}
\end{equation}
This inequality is the starting point for getting a~priori estimates. First, due to Hypothesis (H1), 
the term with $\vec{r}$  on the left-hand side is nonnegative and therefore will be neglected.  Moreover, since the matrix $\mathfrak{D}$ is positive definite on the range of $\mathfrak{P}_{\vec{\ell}}$, we can use the Cauchy--Schwarz inequality to absorb the corresponding terms on the right-hand side by the left-hand side. To estimate the term with the heat conductivity, we use Hypothesis (H4), 
for the term involving the matrix $\mathfrak{M}$ we use Hypothesis (H3). 
Finally, for the boundary integral, we use the H\"{o}lder inequality and also Hypotheses (H5), (H8), (H9)
and the uniform bound \eqref{ap.phi.1} to obtain
\begin{equation}
\begin{split}\label{eq.TE.s2}
&\frac{{\rm d}}{{\rm d}t}\left(\int_{\Omega} E-s+C_2 \dx+\int_{\partial \Omega} \frac{\lambda^\Gamma|\varphi|^2}{2}\dS \right)  +\int_{\partial \Omega} \gamma|\vv|^2\dS \\
&\quad +\int_{\Omega} \frac{\mathcal{S}: \mathcal{D}(\vv)}{\theta}+ C_1M(\theta)\left|\mathfrak{P}_{\vec{\ell}}\left( \na\vzeta + \frac{\vec{z}}{\theta}\na\varphi \right)\right|^2+C_1\frac{(1+\theta^{-\beta})|\na\theta|^2}{\theta^2}\dx \\
&\quad  +C \int_{\partial \Omega}\overline{\kappa}\left| \frac{1}{\theta} - \frac{1}{\theta^\Gamma} \right|^2 +d\left|\mathfrak{P}_{\vec{\ell}}\left(\vzeta-\vzeta^{\,\Gamma}+ (\varphi -\varphi^{\Gamma})\frac{\vec{z}}{\theta^\Gamma}\right)\right|^2\dS\\
&\le C \Big( \int_{\partial \Omega}\overline{\kappa}\left|\frac{1}{\theta^{\Gamma}}-1\right|^2 +d\left|\mathfrak{P}_{\vec{\ell}}\left(\vzeta^{\,\Gamma}+\varphi^{\Gamma}\frac{\vec{z}}{\theta^\Gamma}\right)\right|^2 
\dS +1\Big).
\end{split}
\end{equation}
Finally, using Hypotheses (H8), (H9) and (H10) and the assumptions
on the initial data \eqref{IC.a}, the Young inequality and the estimate \eqref{ap.phi.1}, we have (notice that all terms appearing on the left-hand side are nonnegative)
\begin{equation}
\begin{split}\label{eq.TE.s3}
&\sup_{t\in (0,T)} \int_{\Omega} E(t)-s(t)+C_2 \dx +\int_{\Gamma} \gamma|\vv|^2+\overline{\kappa}\theta^{-2} +d\left|\mathfrak{P}_{\vec{\ell}}\vzeta\right|^2\dS \dt \\
&\quad +\int_{\Omega_T} \frac{\mathcal{S}: \mathcal{D}(\vv)}{\theta}+ M(\theta)\left|\mathfrak{P}_{\vec{\ell}}\left( \na\vzeta + \frac{\vec{z}}{\theta}\na\varphi \right)\right|^2+|\nabla \ln \theta|^2 + |\na \theta^{-\frac{\beta}{2}}|^2 \dx \dt \le C.
\end{split}
\end{equation}
Thus, employing Hypothesis (H5) 
and the definition of $E$, we see that
\begin{equation}
\label{ap.E}
\sup_{t\in (0,T)} \left( \|\vv(t)\|_2 + \|e(t)\|_1 + \|s(t)\|_1+\|\theta(t)\|_1\right) \le C.
\end{equation}
Next, using \eqref{ap.E}, Hypothesis (H9),
the fact that $\beta\le 2$ and the Poincar\'{e} inequality, we have
\begin{equation}
\int_{\Omega_T}\|\ln \theta\|_{1,2}^2+ \|\theta^{-\frac{\beta}{2}}\|^2_{1,2} \dt \le C. \label{ap.theta.1}
\end{equation}

\subsection{Uniform bounds based on the kinetic and the internal energy identity}

Integrating \eqref{eq.v2} over $\Omega$, using the fact that $\Div \vv=0$, assumptions \eqref{bc.vA} and \eqref{Cauchy}, we get
\begin{align}
 \frac{d}{dt}\int_{\Omega} \frac{|\vv|^2}{2}\dx  + \int_{\partial \Omega} \gamma|\vv|^2\dS + \int_{\Omega} \mathcal{S}: \mathcal{D}(\vv)\dx=-\int_{\Omega}Q \vv\cdot\na\varphi \dx.\label{eq.vv2}
\end{align}
Thus, applying Hypothesis (H2), 
the already obtained uniform bound \eqref{ap.Q} and \eqref{ap.phi.1}, and the standard interpolation inequality (see Lemma~\ref{lem.interp}), we deduce after integration over $(0,T)$
\begin{equation}
\int_0^T \|\sqrt{\gamma}\vv\|^2_{L^2(\partial \Omega)}+\|\vv\|_{\frac{5r}{3}}^{\frac{5r}{3}}+\|\vv\|_{1,r}^r + \|\mathcal{S}\|_{r'}^{r'}\dt \le C(T,\vv_0,\cc,\vec{z},\Omega). \label{ap.D.S}
\end{equation}
Consequently, following the procedure developed in \cite{BuMaRa09} and using the fact that $\Omega \in C^{1,1}$, we can decompose the pressure into four parts
$$
p=p_1+p_2+p_3+p_4,
$$
such that for all $\psi\in W^{2,\infty}(\Omega)$ fulfilling $\nabla \psi \cdot \nu =0$ on $\partial \Omega$ and for almost all $t\in (0,T)$
\begin{align}
\int_{\Omega} p_1 \Delta \psi \dx &= \int_{\Omega} \mathcal{S}: \nabla^2 \psi \dx,\label{def.p1}\\
\int_{\Omega} p_2 \Delta \psi \dx & = -\int_{\Omega} (\vv\otimes \vv) : \nabla^2 \psi\dx,\label{def.p2}\\
\int_{\Omega} p_3 \Delta \psi \dx &= \int_{\partial \Omega} \gamma \vv \cdot \nabla \psi \dS,\label{def.p3}\\
\int_{\Omega} p_4 \Delta \psi \dx & = \int_{\Omega} Q\nabla \varphi \cdot \nabla \psi\dx.\label{def.p4}
\end{align}
Consequently, it follows from \eqref{def.p1}--\eqref{def.p4} and the already stated a~priori estimates \eqref{ap.Q}, \eqref{ap.phi.1} and \eqref{ap.D.S} that (see \cite{BuMaRa09} for details) whenever $r>6/5$, we have\footnote{Indeed, we could (at least for $r>\frac 32$) improve the integrability of $p_3$, but we do not need it here.}
\begin{equation}
\sup_{t\in (0,T)}\|p_4(t)\|_{\infty} + \int_{\Omega_T} |p_1|^{r'}+ |p_2|^{\frac{5r}{6}} + |p_3|^{2} \dx\dt\le C.\label{ap.pres}
\end{equation}

Next, we introduce a nonnegative function $f(s) \in C^{\infty}(0,\infty)$, such that $|f(s)|\le 1$, which in addition will satisfy $f(s)=0$ for $s\in (0,1)$ and $f(s):= (1+s)^{-\lambda}$ for $s\ge 2$, where $\lambda \in (0,1)$ is arbitrary. Multiplying  \eqref{eq.e} by $f(e)$, integrating over $\Omega$ and using the fact that $\vv\cdot \nu=0$ on boundary and $\Div \vv=0$, we observe
\begin{equation*}
\begin{aligned}
&-\frac{{\rm d}}{{\rm d}t}\int_{\Omega}F(e)\dx  +\int_{\partial\Omega} f(e)\kappa^{\Gamma}\left( \frac{1}{\theta} - \frac{1}{\theta^\Gamma} \right)\dS+\int_{\Omega} f(e)\mathcal{S}: \mathcal{D}(\vv)\dx \\
 &-\int_{\Omega} f'(e)\left(\kappa\na\theta\cdot \nabla e + \sum_{i=1}^L m_i\left( \na\zeta_i + \frac{z_i}{\theta}\na\varphi \right)\cdot \nabla e\right)\dx \\
&+\int_{\Omega}  f(e)\sum_{i,j=1}^L  M_{ij}\left( \na\zeta_j + \frac{z_j}{\theta}\na\varphi \right)\cdot (z_i\nabla \varphi)  - \frac{f(e) \vec{m}\cdot \vec{z}}{\theta^2} \na\theta\cdot \na\varphi \dx =0,
\end{aligned}
\end{equation*}
where $F'=f$.
Next, using Hypotheses (H2), (H3) and (H9) 
together with the fact that $f(e)\le 1$  and the Cauchy--Schwarz and the Young inequalities, we find
\begin{equation}
\begin{aligned}
\label{eq.e1}
&-\frac{{\rm d}}{{\rm d}t}\int_{\Omega}F(e)\dx  +\lambda\int_{\{e\ge 2\}} \frac{\kappa\na\theta\cdot \nabla e }{(1+e)^{\lambda+1}}\dx -\int_{\{1\le e\le 2\}} f'(e)\kappa\na\theta\cdot \nabla e \dx\\
&\le C\int_{\partial\Omega} \overline{\kappa}\left| \frac{1}{\theta} - \frac{1}{\theta^\Gamma} \right|\dS+C\int_{\{e\ge 1\}} \frac{|\vec{m}|^2|\nabla e|^2}{M(\theta)(1+e)^{2\lambda+2}}\dx\\
&+C\int_{\{e\geq 1\}}  M(\theta)\left|\mathfrak{P}_{\vec{\ell}}\left( \na\vzeta + \frac{\vec{z}}{\theta}\na\varphi \right)\right|^2 +  M(\theta)|\mathfrak{P}_{\vec{\ell}}\,(\vec{z}\,\nabla \varphi)|^2  \dx \\
&+\int_{\{e\ge 1\}} \frac{\delta|\nabla \theta|^2}{(1+e)^{1+\lambda}}+  C(\delta^{-1})\frac{(1+e)^{1-\lambda}|\vec{m}\cdot \vec{z}|^2}{\theta^{4}} |\na\varphi|^2 \dx,
\end{aligned}
\end{equation}
where $\delta>0$ is arbitrary.
Then, the integration over $(0,T)$ and the use of Hypotheses (H4) and (H5)
and the already obtained estimates \eqref{ap.phi.1}, \eqref{eq.TE.s3} and \eqref{ap.E} together with the fact that $|F(e)|\le C(\lambda)(1+e)$ lead to (with a constant $C>0$ depending on the data and on $\lambda>0$)
\begin{equation*}
\begin{aligned}
&\int_{\{e\ge 2\}} \frac{|\na\theta|^2}{(1+\theta)^{\lambda+1}}\dx\dt \le C+C\int_{\{1\le e\le 2\}} |\nabla \theta|^2 \; \dx \dt\\
&+ C\int_{\{e\ge 2\}} \frac{|\vec{m}|^2|\nabla \theta|^2}{M(\theta)(1+\theta)^{2\lambda+2}}\dx\dt+C\int_{\{e\ge 1\}} M(\theta)\dx\dt \\
&+\int_{\{e\ge 2\}} \frac{C\delta|\nabla \theta|^2}{(1+\theta)^{1+\lambda}}+  \frac{C(\delta^{-1})|\vec{m}|^2}{(1+\theta)^{3+\lambda}} \dx\dt,
\end{aligned}
\end{equation*}
which, by using Hypothesis (H3),  can be simplified as
\begin{equation}
\begin{aligned}
\label{eq.e2}
&\int_{\{e\ge 2\}} \frac{|\na\theta|^2}{(1+\theta)^{\lambda+1}}\dx\dt \le C+C\int_{\{1\le e\le 2\}} |\nabla \theta|^2 \; \dx \dt\\
&+ C\int_{\{e\ge 2\}} \frac{|\nabla \theta|^2}{(1+\theta)^{1+\lambda}}\left(\delta+\frac{1}{(1+\theta)^{\lambda}}\right)+C(\delta^{-1})\int_{\{e\ge 1\}} M(\theta)\dx\dt\\
&\le C+C(\delta^{-1})\int_{\{e\ge 1\}} \frac{|\nabla \theta|^2}{\theta^2} + M(\theta)\dx \dt\\
&+ C\delta\int_{\{e\ge 2\}} \frac{|\nabla \theta|^2}{(1+\theta)^{1+\lambda}}\dx\dt.
\end{aligned}
\end{equation}
We see that for any fixed $\lambda\in (0,1)$ (so that the constant $C$ above, which may depend on $\lambda$ is now also fixed), we can find $\delta$ such that $C\delta\le 1/2$ and therefore the last integral in \eqref{eq.e2} can be absorbed by the left hand side. Thus, using in addition  Hypothesis (H3) and estimate \eqref{ap.theta.1}, we obtain
\begin{equation}
\begin{aligned}
\label{eq.e3}
&\int_{\Omega_T} \frac{|\na\theta|^2}{(1+\theta)^{\lambda+1}}\dx\dt \le C(\lambda)\left(1+\int_{\Omega_T} (1+\theta)^{\frac53-\varepsilon_0}\dx \dt\right).
\end{aligned}
\end{equation}
Finally, to estimate the right-hand side, we use  bound \eqref{ap.E}, the interpolation in Lebesgue spaces and the Sobolev embedding theorem. We have
$$
\begin{aligned}
&\int_{\Omega_T} (1+\theta)^{\frac53-\varepsilon_0}\dx \dt=\int_{0}^T \|(1+\theta)^{\frac{1-\lambda}{2}}\|^{\frac{2(5-3\varepsilon_0)}{3(1-\lambda)}}_{\frac{2(5-3\varepsilon_0)}{3(1-\lambda)}}\dt\\
&\quad \le C\int_{0}^T \|(1+\theta)^{\frac{1-\lambda}{2}}\|^{\frac{2(5-3\varepsilon_0)}{3(1-\lambda)}-\frac{2(2-3\varepsilon_0)}{2-3\lambda}}_{\frac{2}{1-\lambda}}
\|(1+\theta)^{\frac{1-\lambda}{2}}\|^{\frac{2(2-3\varepsilon_0)}{2-3\lambda}}_{1,2}\dt\\
&\quad \le C+C\int_{0}^T \left(\int_{\Omega}
\frac{|\nabla \theta|^2}{(1+\theta)^{1+\lambda}}\dx \right)^{\frac{2-3\varepsilon_0}{2-3\lambda}}\dt.
\end{aligned}
$$
Consequently, assuming that $\lambda<\min\{\varepsilon_0, \frac 23\}$ (note that we in fact use finally $\lambda$ close to zero, therefore the upper bounds do not play any important role), we can use this estimate in \eqref{eq.e3}, which after a direct application of the Young inequality leads to
\begin{equation}
\begin{aligned}
\label{ep.theta.1}
&\int_{\Omega_T} \frac{|\na\theta|^2}{(1+\theta)^{\lambda+1}}\dx\dt \le C(\lambda)\qquad \textrm{for all }\lambda>0.
\end{aligned}
\end{equation}
In addition, using the above computation, we get
\begin{equation}
\begin{aligned}
\label{ep.theta.2}
&\int_{\Omega_T} |\theta|^{\frac53 -\lambda}+|\nabla \theta|^{\frac{5}{4}-\lambda}+\frac{|\na\theta|^2}{(1+\theta)^{\lambda+1}}\dx\dt \le C(\lambda).
\end{aligned}
\end{equation}
Note that the estimate holds in fact for any $\lambda >0$, however, blows up when $\lambda \to 0^+$.

\subsection{A~priori estimates for fluxes} Take any $q\in (1,2)$. Using Hypothesis (H3), we see that by virtue of the Cauchy--Schwarz and the Young inequality the following estimate holds true
$$
\begin{aligned}
&\int_{\Omega_T} |\mathfrak{q}_{\cc}|^q \dx \dt \le C\int_{\Omega_T} \left|\mathfrak{M}
\mathfrak{P}_{\vec{\ell}}\left(\nabla\vzeta + \frac{\vec{z}}{\theta}\nabla \varphi\right)\right|^q + \frac{|\vec{m}|^q |\nabla \theta|^q}{\theta^{2q}} \dx \dt\\
&\le C\int_{\Omega_T} |M(\theta)|^{\frac{q}{2}}\left(M(\theta)\left|
\mathfrak{P}_{\vec{\ell}}\left(\nabla\vzeta + \frac{\vec{z}}{\theta}\nabla \varphi\right)\right|^2\right)^{\frac{q}{2}}\dx\dt\\
&\quad  + C\int_{\{\theta\ge 1\}}\frac{|\vec{m}|^q}{(1+\theta)^{2q-\frac{q(1+\lambda)}{2}}} \left(\frac{|\nabla \theta|^2}{(1+\theta)^{1+\lambda}}\right)^{\frac{q}{2}}\dx\dt+ C\int_{\{\theta < 1\}}\frac{|\vec{m}|^q}{\theta^{\frac{q(2-\beta)}{2}}} \left(\frac{|\nabla \theta|^2}{\theta^{\beta+2}}\right)^{\frac{q}{2}}\dx\dt\\
&\le C\int_{\Omega_T} |M(\theta)|^{\frac{q}{2-q}}+M(\theta)\left|
\mathfrak{P}_{\vec{\ell}}\left(\nabla \vzeta + \frac{\vec{z}}{\theta}\nabla \varphi\right)\right|^2\dx\dt\\
&\quad  + C\int_{\{\theta\ge 1\}}\frac{|\vec{m}|^\frac{2q}{2-q}}{(1+\theta)^{\frac{4q}{2-q}-\frac{q(1+\lambda)}{2-q}}} +\frac{|\nabla \theta|^2}{(1+\theta)^{1+\lambda}}\dx\dt+ \int_{\{\theta < 1\}}\frac{|\vec{m}|^\frac{2q}{2-q}}{\theta^{\frac{q(2-\beta)}{2-q}}} +\frac{|\nabla \theta|^2}{\theta^{\beta+2}}\dx\dt\\
&\le C(\lambda)+C\int_{\Omega_T} |M(\theta)|^{\frac{q}{2-q}}+ C\int_{\{\theta\ge 1\}}\frac{|\vec{m}|^\frac{2q}{2-q}}{(1+\theta)^{\frac{4q}{2-q}-\frac{q(1+\lambda)}{2-q}}}\dx\dt+ \int_{\{\theta < 1\}}\frac{|\vec{m}|^\frac{2q}{2-q}}{\theta^{\frac{q(2-\beta)}{2-q}}} \dx\dt,
\end{aligned}
$$
where for the last inequality we used \eqref{eq.TE.s3}. Employing also assumption \eqref{hp.m} and setting $\lambda$ sufficiently small, the above inequality reduces to
$$
\begin{aligned}
&\int_{\Omega_T} |\mathfrak{q}_{\cc}|^q \dx \dt \le C+C\int_{\Omega_T} \theta^{(\frac53-\varepsilon_0)\frac{q}{2-q}}\chi_{\{\theta\ge 1\}}+\theta^{-\frac{q(\beta-\varepsilon_0)}{2-q}}\chi_{\{\theta\le 1\}} \dx\dt.
\end{aligned}
$$
Thus, using the a~priori estimates \eqref{ap.theta.1} and \eqref{ep.theta.2}, we see that
\begin{equation}\label{ap.qc}
\int_{\Omega_T}|\mathfrak{q}_{\cc}|^q\dx\dt \le C,
\end{equation}
for $q$ chosen by
\begin{equation} \label{qq1}
\begin{aligned}
q&=2 \quad \textrm{ if } \varepsilon_0 \ge \max\left\{\beta, \frac{5}{3}\right\},\\
q&<\min\left\{\frac{10}{10-3\varepsilon_0},\frac{2\beta}{2\beta-\varepsilon_0}\right\}\quad \textrm{ otherwise.}
\end{aligned}
\end{equation}

Second, we focus on estimates for $\vc{q}_e$. Using \eqref{qe}--\eqref{hp.mM} and  the Young inequality, we get
$$
\begin{aligned}
\int_{\Omega_T}|\vc{q}_e|^q\dx \dt &\le C\int_{\Omega_T}|\kappa|^q |\na\theta|^q + |\vec{m}|^q \left|\mathfrak{P}_{\vec{\ell}}\left( \na\vzeta + \frac{\vec{z}}{\theta}\na\varphi \right)\right|^q\dx\dt\\
&\le C\int_{\Omega_T}|\na\theta|^q\chi_{\{\theta \ge 1\}}+\frac{|\nabla \theta|^q}{\theta^{\beta q}}\chi_{\{\theta\le 1\}}\dx\dt\\
 &\quad + \int_{\Omega_T}\frac{|\vec{m}|^{\frac{2q}{2-q}}}{|M(\theta)|^{\frac{q}{2-q}}}  +M(\theta)\left|\mathfrak{P}_{\vec{\ell}}\left( \na\vzeta + \frac{\vec{z}}{\theta}\na\varphi \right)\right|^2\dx\dt.
 \end{aligned}
$$
Due to the assumption \eqref{hp.m} and the a~priori estimates \eqref{eq.TE.s3} and \eqref{ep.theta.2}, the above inequality reduces for all $q\in (1,5/4)$ to
$$
\begin{aligned}
\int_{\Omega_T}|\vc{q}_e|^q\dx \dt
&\le C(q)+ \int_{\Omega_T}\left(\frac{|\nabla \theta|^2}{\theta^{\beta +2}}\right)^{\frac{q}{2}}\left(\frac{1}{\theta^{\beta -2}}\right)^{\frac{q}{2}}\chi_{\theta\le 1}\dx\dt\\
 &\quad + \int_{\Omega_T}\theta^{\frac{q}{2-q}}\chi_{\theta>1} + \theta^{-\frac{q(\beta-\varepsilon_0)}{2-q}}\chi_{\theta\leq 1} \dx\dt\\
 &\le C(q)+ \int_{\Omega_T}\frac{|\nabla \theta|^2}{\theta^{\beta +2}} +\theta^{\frac{q}{2-q}}\chi_{\theta >1} + \theta^{-\frac{q(\beta-\varepsilon_0)}{2-q}}\chi_{\theta \leq 1}+\theta^{-\frac{q(\beta-2)}{2-q}}\chi_{\theta \le 1} \dx\dt.
 \end{aligned}
$$
Hence, repeating the same procedure as above, we deduce that provided $\varepsilon_0<\beta$,\footnote{In what follows, we will need that $\beta \ge 1$, therefore the case $\varepsilon_0 \ge 2\beta$ is not interesting for us.} for all $q$ fulfilling
$$
\begin{aligned}
q&<\min\left\{\frac54,\frac{2\beta}{2\beta-\varepsilon_0}\right\}
\end{aligned}
$$
there holds
\begin{equation} \label{ap.qe}
\int_{\Omega_T}|\vc{q}_e|^q \dx\dt \le C(q).
\end{equation}

\subsection{Estimates on the chemical potential and concentration}
Let us consider $q\in (1,2]$. Then using the triangle inequality, the H\"{o}lder inequality, the structural assumption \eqref{hp.m} and a~priori bounds \eqref{ap.phi.1} and \eqref{eq.TE.s3}, we can find
$$
\begin{aligned}
\int_{\Omega_T} |\mathfrak{P}_{\vec{\ell}}\nabla\vzeta|^q \dx \dt &\le C\int_{\Omega_T}\left|\mathfrak{P}_{\vec{\ell}}\left(\nabla\vzeta + \frac{\vec{z}}{\theta}\nabla \varphi\right)\right|^q +\left|\frac{\vec{z}}{\theta}\nabla \varphi\right|^q\dx\dt\\
&\le C+ C\int_{\Omega_T} \frac{1}{(M(\theta))^{\frac{q}{2-q}}} +\frac{1}{\theta^q}\dx\dt\\
&\le C+ C\int_{\Omega_T} \frac{1}{\theta^{\frac{q(\beta-\varepsilon_0)}{2-q}}} +\frac{1}{\theta^q}\dx\dt.
\end{aligned}
$$
Therefore, setting
\begin{equation}\label{aaa}
q=a:=\min\left\{\beta,\frac{2\beta}{2\beta-\varepsilon_0}\right\}
\end{equation}
and recalling the a~priori bound \eqref{ap.theta.1}, we finally obtain
\begin{equation}\label{ap.nabla.zeta.1}
\begin{aligned}
\int_{\Omega_T} |\mathfrak{P}_{\vec{\ell}}\nabla\vzeta|^a \dx \dt \le C+ C\int_{\Omega_T} \frac{1}{\theta^{\beta}}\dx\dt\le C.
\end{aligned}
\end{equation}
Hence, using the fact that we control the trace of $\mathfrak{P}_{\vec{\ell}}\,\vzeta$, see \eqref{eq.TE.s3}, and assumption \eqref{ass.d}, the Poincar\'{e} inequality yields that
\begin{equation}\label{ap.zeta.2}
\begin{aligned}
\int_{0}^T \|\mathfrak{P}_{\vec{\ell}}\,\vzeta\|^{a}_{1,a} \dt\le C.
\end{aligned}
\end{equation}
Consequently, it follows directly from \eqref{as.zeta2} that
\begin{equation}\label{ap.zeta.3}
\begin{aligned}
\int_{0}^T \|\vzeta\|^{a}_{a} \dt\le C.
\end{aligned}
\end{equation}
It is worth noticing here that the above estimate together with assumption \eqref{hp.sc.2} directly imply that each concentration $c_i$ is strictly positive almost everywhere in $\Omega_T$.
Finally, a direct computation together with assumption \eqref{hp.sc.1} implies
$$
C_1|\pa_{x_k}\cc|^2 \le -\sum_{i,j=1}^L\pa^2_{c_i c_j} s_{\cc}^*(\cc) \pa_{x_k} c_i \pa_{x_k}c_j=\pa_{x_k} \vzeta \cdot \pa_{x_k}\cc=\pa_{x_k} (\mathfrak{P}_{\vec{\ell}}\,\vzeta) \cdot \pa_{x_k}\cc,
$$
where the last identity follows from \eqref{c.in.G}. Thus, we have
$$
C_1|\nabla \cc|\le |\nabla (\mathfrak{P}_{\vec{\ell}}\,\vzeta)|
$$
and the a~priori bound \eqref{ap.zeta.2} yields
\begin{equation}\label{ap.nabla.c}
\begin{aligned}
\int_{0}^T \|\cc\|^{a}_{1,a} \dt\le C.
\end{aligned}
\end{equation}
Finally we estimate the boundary fluxes. We easily see that
\begin{equation} \label{bnd_flx}
\int_{\Gamma} |\vec{q}_{\cc\ \Gamma}|^2 + |q_{e\Gamma}|^2 + |q_{\varphi \Gamma}|^q \dS \dt \leq C,
\end{equation}
where $1\leq q <\infty$ arbitrary, and $C$ depends on the data of the problem.

\section{Weak solution, main results}

\subsection{Different types of weak solutions}

In this paper, we will consider three different types of the solution. The natural definition is connected with the weak formulation of the system of equations \eqref{eq.c}--\eqref{eq.E} with initial and boundary conditions \eqref{IC}--\eqref{bc.phiA}. Taking into account Hypotheses (H1)--(H10), we have
\begin{definition} \label{d_weak_E}
We say that $(\cc,\vv,e,\varphi)$ is a weak total energy solution to problem \eqref{eq.c}--\eqref{bc.phiA} in $\Omega_T$, provided
\begin{itemize}
\item $\cc \in G$ for a.a. $t \in [0,T)$, $\vv \in L^r(0,T;W^{1,r}_{0,\nu}(\Omega)) \cap L^\infty(0,T;L^2(\Omega;\R^3))$, $\vv \in L^3(\Omega_T;\R^3)$, $\Div \vv = 0$ a.e. in $\Omega_T$, $\varphi \in L^\infty(0,T; W^{1,q}(\Omega)) \cap W^{1,p}(\Omega_T)$ for all $q\in [1,\infty)$ and some $p>1$, $e \in L^\infty(0,T;L^1(\Omega))\cap L^q(0,T;W^{1,q}(\Omega))$ for some $q>1$, $1/e \in L^s(0,T;W^{1,s}(\Omega))$ for some $s >1$

\item  $\cc \in C_{weak} ([0,T];L^q(\Omega;\R^L))$ for some $q>1$,  $\vv \in C_{weak}([0,T]; L^2(\Omega;\R^3))$,  $e \in C_{weak} ([0,T]; L^1(\Omega))$ and the initial conditions $(\cc^{\ 0},\vv^0,e^0)$ are fulfilled in the weak sense

\item the weak formulation of the species equations holds true
\begin{equation} \label{w_c}
\int_{\Omega_T} \cc \cdot \pa_t \vec{\psi}+ (\cc \otimes  \vv + \mathfrak{q}_{\cc}): \na \vec{\psi} + \vec{r}\cdot \vec{\psi} \dx \dt + \int_\Omega \vec{c}^{\ 0}\cdot \vec{\psi}(0) \dx = \int_{\Gamma} \vec{q}_{\cc\ \Gamma} \cdot\vec{\psi} \dS\dt
\end{equation}
for all $\vec{\psi} \in C^\infty(\Omega_T;\R^L)$, $\vec{\psi}(T)=\vec{0}$

\item the weak formulation of the momentum equation holds true
\begin{equation} \label{w_v}
\begin{aligned}
&\int_{\Omega_T} \vv \cdot \pa_t \vc{u}+ (\vv\otimes \vv - \mathcal{S}): \mathcal{D}(\vc{u}) - Q\nabla \varphi \cdot \vc{u} \dx \dt \\
+ & \int_\Omega \vv^0 \cdot \vc{u}(0) \dx = \int_{\Gamma} \gamma \vv\cdot \vc{u} \dS \dt
\end{aligned}
\end{equation}
for all $\vc{u} \in C^\infty(\Omega_T;\R^3)$, $\vc{u}(T) =\vc{0}$, $\vc{u} \cdot \nu =0$ on $(0,T)\times \pa \Omega$, $\Div \vc{u} = 0$ in $\Omega_T$

\item the equation for the electrostatic potential is fulfilled in the strong sense
\begin{equation} \label{s_poi}
-\Delta \varphi = Q
\end{equation}
a.e. in $Q_T$, including the time instant $t=0$, and $\na \varphi \cdot \nu  = q_{\varphi\Gamma}$ a.e. on $\Gamma$

\item the weak formulation of the total energy balance holds true
\begin{equation} \label{w_E}
\begin{aligned}
&\int_{\Omega_T} E \pa_t \psi  +\Big(\Big(\frac{|\vv|^2}{2} + e + Q\varphi\Big) \vv
+ \varphi \sum_{i=1}^L z_i \vc{q}_{\cc}^i + \vc{q}_e - {\mathcal T}\vv -\varphi \na \pa_t \varphi\Big)\cdot \na \psi \dx\dt \\
+& \int_\Omega E(0) \psi(0) \dx
= \int_{\Gamma} \Big(\gamma |\vv|^2  + \varphi \vec{z}\cdot \vec{q}_{\cc \ \Gamma} + q_{e\Gamma} -\varphi \pa_t q_{\varphi\Gamma}\Big)\psi \dS \dt
\end{aligned}
\end{equation}
for all $\psi \in C^\infty(\Omega_T)$ with $\psi(T) =0$, where $\varphi_t$ solves \eqref{s_poi} differentiated with respect to time (in the weak sense) and $E(0) = \frac{|\vv|^2(0)}{2} + e(0) + \frac{|\na \varphi|^2(0)}{2}$, where the initial value of $\varphi$ is a solution to \eqref{s_poi} at the time instant $t=0$
\end{itemize}
\end{definition}

Note that the main trouble maker is the convective term in the total energy balance as for some $r$ (the parameter in the power law model for the stress tensor) the velocity may be not integrable in the third power. To this aim, we introduce another definition of weak solution

\begin{definition} \label{d_var_e}
We say that $(\cc,\vv,e,\varphi)$ is a variational energy solution to problem \eqref{eq.c}--\eqref{bc.phiA} in $\Omega_T$, provided
the functions $(\cc,\vv,e,\varphi)$ fulfill the integrability and continuity assumptions from Definition \ref{d_weak_E} with $\vv \in L^3(\Omega_T;\R^3)$ replaced by $\vv e \in L^1(\Omega_T;\R^3)$, the species equation, the momentum equation and the equation for the electrostatic potential are fulfilled in the same sense as in Definition \ref{d_weak_E} (\ref{w_c}--\ref{s_poi}), and the weak formulation of the total energy balance is replaced by the inequality for the internal energy balance
\begin{equation} \label{w_ineq_e}
\begin{aligned}
&\int_{\Omega_T} e \pa_t \psi +  \mathcal{S}:\mathcal{D}(\vv) \psi \dx \dt + \int_\Omega e^0 \psi(0) \dx \\
\leq & \int_{\Omega_T}  \vec{z}\cdot (\mathfrak{q}_{\cc} \nabla \varphi) \psi - (e\vv+\vc{q}_e)\cdot \na \psi \dx \dt +  \int_{\Gamma} q_{e\Gamma} \psi \dS \dt
\end{aligned}
\end{equation}
for all nonnegative $\psi \in C^\infty(\Omega_T)$, $\psi(T) =0$, and by the global total energy balance (i.e., the total energy balance integrated over $\Omega_T$)
\begin{equation} \label{tot_glo_E}
\int_\Omega E(t) \dx +  \int_{\Gamma} \gamma |\vv|^2 \psi + \varphi \vec{z}\cdot (\mathfrak{q}_{\cc\ \Gamma}) + q_{e\Gamma} -\varphi  \pa_t q_{\varphi\Gamma} \dS \dt \leq \int_\Omega E(0) \dx
 \end{equation}
for a.a. $t \in (0,T)$.
\end{definition}

Finally, in some cases we can even verify the weak formulation of the internal energy balance.

\begin{definition} \label{d_weak_e}
We say that $(\cc,\vv,e,\varphi)$ is a weak internal energy solution to problem \eqref{eq.c}--\eqref{bc.phiA} in $\Omega_T$, provided
the functions $(\cc,\vv,e,\varphi)$ fulfill the integrability and continuity assumptions from Definition \ref{d_weak_E} with $\vv \in L^3(\Omega_T;\R^3)$ replaced by $\vv e \in L^1(\Omega_T;\R^3)$, the species equation, the momentum equation and the equation for the electrostatic potential are fulfilled in the same sense as in Definition \ref{d_weak_E} (\ref{w_c}--\ref{s_poi}), and the weak formulation of the total energy balance is replaced by the weak formulation for the internal energy balance, i.e.
\begin{equation} \label{w_eq_e}
\begin{aligned}
&\int_{\Omega_T} e \pa_t \psi +  \mathcal{S}:\mathcal{D}(\vv) \psi \dx \dt + \int_\Omega e^0 \psi(0) \dx \\
&= \int_{\Omega_T} \vec{z} \cdot (\mathfrak{q}_{\cc}\nabla \varphi) \psi - (e\vv+\vc{q}_e)\cdot \na \psi \dx \dt + \int_{\Gamma} q_{e\Gamma} \psi \dS \dt
\end{aligned}
\end{equation}
for all $\psi \in C^\infty(\Omega_T)$, $\psi(T) =0$.
\end{definition}

\subsection{Main results}

In the subsequent sections we will prove the following results.

\begin{theorem} \label{T1}
Let $r>\frac 32$ and $1<\beta\leq 2$. Under the assumptions stated in Hypotheses (H1)--(H10)  there exists a variational energy solution to our problem in the sense of Definition \ref{d_var_e}.
\end{theorem}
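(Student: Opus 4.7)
The plan is to build the solution by a multi-level approximation and then pass to the limit using the a~priori estimates from Section~3. At the outermost level I would introduce a regularization parameter $\varepsilon>0$ that (i) bounds the temperature away from zero (replacing $\theta$ by $\theta+\varepsilon$ in the flux coefficients), (ii) mollifies/truncates the dependence of $\mathfrak{M}^*$, $\vec{m}^*$, $\vec{r}^{\,*}$, $\mathcal{S}^*$ on their arguments so that the coefficients become bounded and uniformly elliptic, and (iii) adds $\varepsilon$-Laplace-type regularizations where needed (for instance in the equation for $e$). At the inner level I would use a Galerkin scheme for the velocity on a smooth basis of $W^{1,r}_{0,\nu}(\Omega)$-valued divergence-free functions, coupled with classical existence results for the parabolic species/energy equations and for the Poisson equation. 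Existence at the inner level follows from Brouwer's theorem combined with a Schauder fixed point argument exploiting the boundedness of the regularized coefficients; the concentration constraint $\cc\in G$ is preserved thanks to Hypothesis (H5) via the entropy inequality, cf.\ \eqref{ap.zeta.3} and the discussion after \eqref{hp.sc.2}.

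All the a~priori estimates of Section~3 transfer to the approximations (the entropy identity \eqref{eq.s.2} becoming an inequality under the regularization). From them one extracts, for each approximate sequence, weak limits $\vv$, $\vzeta$, $\theta$, $\cc$, $e$, $\varphi$ in the appropriate spaces. The Poisson equation \eqref{s_poi} gives strong convergence of $\varphi$ and $\na\varphi$ via elliptic regularity once $\cc$ converges. Strong convergence of $\cc$ follows from the Generalized Aubin lemma (Lemma~3 of the excerpt), applied with $y_n$ the approximate $\cc_n$ (bounded in $L^\infty\cap L^a(W^{1,a})$ by \eqref{ap.nabla.c}) and the time derivative controlled via equation \eqref{eq.c}; a parallel argument yields strong convergence of $e$ in the appropriate Lebesgue space using \eqref{ep.theta.2} and the equation for $e$, and hence strong convergence of $\theta$.

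The delicate step is the identification of the weak limit of $\mathcal{S}^*(\cc,\theta,\mathcal{D}(\vv))$ in the range $r\in(3/2,11/5]$, since there $\vv$ is not available as a test function in the momentum equation. I would employ the \emph{Lipschitz truncation} method in the spirit of Diening--Málek--Steinhauer, in the version adapted to Navier slip boundary conditions and with the pressure decomposition \eqref{def.p1}--\eqref{def.p4} of Bulíček--Málek--Rajagopal. The point is that the four pressure components have the integrability \eqref{ap.pres}, which when combined with the uniform bounds \eqref{ap.D.S} enables one to construct admissible Lipschitz test functions $\vv_{n,k}$ equal to $\vv_n-\vv$ on a large set and to pass to the limit in the approximate momentum equation tested by $\vv_{n,k}$. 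Minty's trick, using the monotonicity \eqref{hp.S.3}, then yields $\mathcal{S}_n:\mathcal{D}(\vv_n)\to\mathcal{S}:\mathcal{D}(\vv)$ a.e.\ on the truncation sets and, after letting the truncation parameter go to infinity, the desired identification.

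Once the stress is identified, the species equation \eqref{w_c} and the momentum equation \eqref{w_v} pass to the limit directly, as does the Poisson equation. The internal energy balance can only be obtained as the inequality \eqref{w_ineq_e} because the dissipation $\mathcal{S}:\mathcal{D}(\vv)$ and the quadratic flux terms are only weakly lower semicontinuous; this is exactly the reason for working with variational rather than weak total energy solutions. The global total energy balance \eqref{tot_glo_E} follows by summing the kinetic energy identity (obtained by testing \eqref{w_v} with $\vv$ after Lipschitz truncation), the electrostatic energy identity derived from \eqref{s_poi} differentiated in time and tested by $\varphi$, and the internal energy inequality, exactly as in the formal derivation \eqref{eq.TE}. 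The main obstacle throughout is the simultaneous control of the low-integrability fluxes (estimates \eqref{ap.qc}, \eqref{ap.qe}) and the concentration gradient \eqref{ap.nabla.c} through the coupling with the temperature, all of which must be threaded through the Lipschitz truncation argument; the restriction $1<\beta\le 2$ together with $r>3/2$ is precisely what makes this interplay closable.
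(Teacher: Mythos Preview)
Your overall architecture---multi-level approximation, Galerkin for the velocity, entropy-based a~priori bounds, Aubin-type compactness, and Lipschitz truncation to identify $\mathcal{S}$---matches the paper's strategy. However, your derivation of the global total energy inequality \eqref{tot_glo_E} contains a genuine gap.

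You propose to obtain \eqref{tot_glo_E} by ``summing the kinetic energy identity (obtained by testing \eqref{w_v} with $\vv$ after Lipschitz truncation) \ldots\ and the internal energy inequality.'' This fails for two reasons. First, for $r<\tfrac{11}{5}$ the limit velocity $\vv$ is \emph{not} an admissible test function in \eqref{w_v}: the term $(\vv\otimes\vv):\nabla\vv$ is not integrable, and the Lipschitz truncation only identifies the weak limit of $\mathcal{S}^*$, it does not restore admissibility of $\vv$ as a test function. Second, even if you had a kinetic energy identity, summing it with the internal energy \emph{inequality} \eqref{w_ineq_e} produces the wrong sign: the dissipation $\mathcal{S}:\mathcal{D}(\vv)$ appears with opposite signs in the two relations and would cancel only if both were equalities; with \eqref{w_ineq_e} as an inequality you end up with $\int_\Omega E(t)\ge\int_\Omega E(0)-\ldots$, not the required $\le$.

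The paper resolves this by introducing an additional approximation level: a cut-off $\xi^k(|\vv|^2)$ multiplying the convective term $\vv\otimes\vv$ in the momentum equation. With this cut-off in place, $\vv^k$ \emph{is} admissible in the approximate momentum equation for every $r>1$, so one can test by $\vv^k\psi$ and derive the total energy balance as an \emph{equality} at level $k$ (the dissipation cancels exactly against the internal energy balance, which is still an equality at this stage). Only then does one pass $k\to\infty$: with spatially constant $\psi$ the cut-off convective flux integrates to zero on $\partial\Omega$, and the inequality in \eqref{tot_glo_E} arises solely from the weak lower semicontinuity of $\int_\Omega|\vv^k(t)|^2$ and $\int_\Gamma\gamma^k|\vv^k|^2$. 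Your scheme is missing this cut-off (or an equivalent device), and without it the global energy inequality cannot be closed in the range $\tfrac{3}{2}<r<\tfrac{11}{5}$.
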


\begin{theorem} \label{T2}
Let $r>\frac 95$ and $1<\beta\leq 2$. Under the assumptions stated in Hypotheses (H1)--(H10) there exists a weak total energy solution to our problem in the sense of Definition \ref{d_weak_E}.
\end{theorem}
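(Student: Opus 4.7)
The proof proceeds by taking the approximate solutions constructed in the proof of Theorem~\ref{T1} and passing to the limit in the full local total energy balance, which was not possible at the lower regularity. The key structural gain from $r>9/5$ is that the a~priori bound $\vv_n\in L^{5r/3}(\Omega_T)$ from \eqref{ap.D.S} satisfies $5r/3>3$. Combined with strong convergence $\vv_n\to \vv$ in $L^2(\Omega_T)$ provided by Aubin--Lions applied to the momentum equation, interpolation yields $\vv_n\to \vv$ strongly in $L^3(\Omega_T)$, and hence $|\vv_n|^2\vv_n\to |\vv|^2\vv$ in $L^{1+\varepsilon}(\Omega_T)$. This is exactly what is needed to treat the cubic convective flux appearing in \eqref{w_E}.

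The passage to the limit in the weak formulations of the species, momentum and Poisson equations and the identifications $\mathcal{S}=\mathcal{S}^*(\cc,\theta,\mathcal{D}(\vv))$, $\vc{q}_\cc^i=\vc{q}_\cc^{*,i}(\ldots)$ and $\vc{q}_e=\vc{q}_e^*(\ldots)$ follow exactly as in Theorem~\ref{T1}, using the pressure decomposition \eqref{def.p1}--\eqref{def.p4}, the monotonicity \eqref{hp.S.3}, the Minty trick, and strong compactness for $\cc_n,\theta_n,\varphi_n$. At the approximate level the full local total energy identity holds, tested against $\psi\in C^{\infty}(\Omega_T)$ with $\psi(T)=0$, and it remains to pass to the limit in each flux on the right-hand side of \eqref{w_E}: the enthalpy term $e_n\vv_n\cdot\nabla\psi$ is handled by strong convergence of $e_n$ in $L^{5/3-\delta}(\Omega_T)$ (from \eqref{ep.theta.2} via $e\sim\theta$ for large $\theta$) together with the strong $L^3$ convergence of $\vv_n$; the Lorentz-type term $Q_n\varphi_n\vv_n\cdot\nabla\psi$ is handled using \eqref{ap.Q}, \eqref{ap.phi.1} and the strong convergence of $\vv_n$; the fluxes $\varphi_n\vec{z}\cdot(\mathfrak{q}_\cc)_n$ and $(\vc{q}_e)_n$ converge by strong convergence of $\varphi_n$ in $C^{0,\alpha}$ together with weak convergence from \eqref{ap.qc}--\eqref{ap.qe}; the mechanical term $\mathcal{T}_n\vv_n$ is treated via the pressure decomposition and the identified stress; and the displacement flux $\varphi_n\nabla\pa_t\varphi_n$ is reduced to strong convergence of $\cc_n$ through the weak formulation of $-\Delta\pa_t\varphi=\pa_t Q$ obtained by differentiating \eqref{s_poi} in time. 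The boundary contributions pass to the limit by \eqref{bnd_flx} and strong convergence of traces.

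The main obstacle is precisely the cubic convective flux $\int|\vv_n|^2\vv_n\cdot\nabla\psi\dx\dt$; this is the term that forces the weaker notion of variational solution in Theorem~\ref{T1} in the regime $3/2<r\le 9/5$. A secondary delicate point, shared with Theorem~\ref{T1}, will be the identification of the stress limit without requiring $r\ge 11/5$: the plan is to combine monotonicity \eqref{hp.S.3} with a Lipschitz truncation of $\vv_n-\vv$ in the momentum equation to obtain $\limsup\int\mathcal{S}_n:\mathcal{D}(\vv_n)\le\int\mathcal{S}:\mathcal{D}(\vv)$, which together with \eqref{hp.S.3} yields almost everywhere convergence of $\mathcal{D}(\vv_n)$ and hence of $\mathcal{S}_n$. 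Once the strong $L^3$ convergence of velocity and the identification of the stress are in hand, the passage to the limit in the total energy balance is straightforward and yields a solution in the sense of Definition~\ref{d_weak_E}.
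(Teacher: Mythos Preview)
Your proposal is correct and follows essentially the same route as the paper: you derive the local total energy identity at the approximate level (where the $\xi^k$ cutoff in the convective term is still present), use that $r>9/5$ yields $5r/3>3$ so the cubic velocity flux converges, apply the solenoidal Lipschitz truncation of \cite{BrDiSc} to identify $\mathcal{S}=\mathcal{S}^*(\cc,\theta,\mathcal{D}(\vv))$, and pass to the limit term by term in \eqref{w_E} exactly as the paper does in its treatment of the range $9/5<r<11/5$. The only cosmetic discrepancy is your phrasing ``approximate solutions constructed in the proof of Theorem~\ref{T1}'': in the paper Theorems~\ref{T1}--\ref{T3} share a single approximation scheme and differ only in the final $k\to\infty$ argument, so one does not build T2 on top of T1 but rather branches at the last step.
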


\begin{theorem} \label{T3}
Let $r\ge \frac{11}{5}$ and $1<\beta\leq 2$. Under the assumptions stated in Hypotheses (H1)--(H10)  there exists a weak internal energy solution to our problem  in the sense if Definition \ref{d_weak_e}.
\end{theorem}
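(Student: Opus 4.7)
The plan is to strengthen the variational energy solution produced by Theorem \ref{T1} by establishing the kinetic energy identity in weak form, which when combined with the total energy equality \eqref{tot_glo_E} and a weak electrostatic energy identity forces the internal energy inequality \eqref{w_ineq_e} to become the equality \eqref{w_eq_e}. In other words, the three conservation laws (kinetic, electrostatic and internal) fit together additively into the total energy balance, so once the first two are derived as equalities, the third one follows automatically.

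The key quantitative input is that for $r \geq 11/5$ the velocity satisfies $\vv \in L^{5r/3}(\Omega_T)$ by \eqref{ap.D.S} and the Sobolev interpolation of Lemma \ref{lem.interp}, hence $\vv \otimes \vv \in L^{5r/6}(\Omega_T)$, and the triple product $\vv \otimes \vv : \nabla \vv$ lies in $L^{5r/11}(\Omega_T) \subseteq L^1(\Omega_T)$. This makes $\vv$ itself (suitably mollified in time and Lipschitz-truncated in space) an admissible test function in the weak momentum equation \eqref{w_v}, producing the kinetic energy identity. The electrostatic energy identity is obtained by testing the time-differentiated version of \eqref{s_poi} with $\varphi$, which is legitimate thanks to the Calder\'on--Zygmund regularity \eqref{ap.phi.1} for $\varphi$ and the regularity of $\partial_t \cc$ from \eqref{w_c}.

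The main analytic obstacle, inherited from the proof of Theorems \ref{T1} and \ref{T2}, is the identification of the weak limit of the dissipation along the approximating sequence $(\cc_n,\vv_n,e_n,\varphi_n)$, namely the strong convergence
\begin{equation*}
\mathcal{S}_n : \mathcal{D}(\vv_n) \longrightarrow \mathcal{S}^*(\cc,\theta,\mathcal{D}(\vv)) : \mathcal{D}(\vv) \quad \text{in } L^1(\Omega_T).
\end{equation*}
I would use the Lipschitz truncation technique applied to $\vv_n - \vv$, combined with the monotonicity \eqref{hp.S.3} and Minty's trick, to obtain this convergence. The threshold $r \geq 11/5$ enters exactly at the point where the truncated function is tested against the momentum equation: one must control $\int (\vv_n\otimes \vv_n):\nabla(\vv_n-\vv)^{\mathrm{Lip}}$ uniformly, and the computation above shows $r \geq 11/5$ is the minimal exponent for which this term is controlled. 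The additional coupling terms absent from the classical Navier--Stokes setting (Lorentz force $Q\nabla\varphi$, boundary friction $\gamma\vv$, and the temperature and concentration dependence of $\mathcal{S}^*$) must be shown to converge to zero when tested against the Lipschitz truncation; this is where Hypotheses (H2), (H4), (H7) together with the uniform bounds \eqref{ap.Q}, \eqref{ap.phi.1} and the strong compactness of $\cc_n$ and $\theta_n$ (via Lemma \ref{lem.aubin}) are used.

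With the dissipation convergence in hand, passing to the limit in the remaining terms of \eqref{w_eq_e} is standard: $e_n \vv_n \to e\vv$ in $L^1(\Omega_T;\R^3)$ follows from strong compactness of $\vv_n$ in $L^q$ for $q<5r/3$ and the uniform bound on $e_n$ via \eqref{ep.theta.2}; the heat flux $\vc{q}_{e,n}$ converges weakly by \eqref{ap.qe}; the Joule-heating coupling $\vec{z}\cdot(\mathfrak{q}_{\cc,n}\nabla\varphi_n)$ converges by \eqref{ap.qc} together with strong convergence of $\nabla\varphi_n$ coming from the elliptic equation \eqref{s_poi}; and the boundary flux $q_{e\Gamma,n}$ passes to the limit thanks to \eqref{bnd_flx} and Hypothesis (H9). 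Collecting everything yields \eqref{w_eq_e} and completes the proof.
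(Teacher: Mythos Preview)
Your overall strategy has a structural gap. You propose to force the internal energy \emph{equality} \eqref{w_eq_e} by subtracting a weak kinetic identity and a weak electrostatic identity from what you call ``the total energy equality \eqref{tot_glo_E}''. But \eqref{tot_glo_E} is the \emph{global} total energy \emph{inequality} of Definition~\ref{d_var_e}: it is stated only for the constant test function and with $\leq$, not $=$. Subtracting two local identities from one global inequality yields at best the integrated internal energy balance in the opposite direction, which combined with \eqref{w_ineq_e} for $\psi\equiv 1$ gives the global internal energy equality --- not \eqref{w_eq_e} for general $\psi$. To make a subtraction argument work you would need the \emph{weak} total energy balance \eqref{w_E} with general test functions, i.e., you would have to prove Theorem~\ref{T2} first and start from there, not from Theorem~\ref{T1}.

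Second, your invocation of Lipschitz truncation and your explanation of where $r\ge 11/5$ enters are both off. The whole point of the threshold $r\ge 11/5$ is that it is precisely the exponent at which $\vv$ \emph{itself} becomes an admissible test function in \eqref{w_v} (because $(\vv\otimes\vv):\nabla\vv\in L^{1}$), so no truncation is required: the Minty trick applies directly. Lipschitz truncation is the tool one uses \emph{below} $11/5$, and in that method the term $\int(\vv_n\otimes\vv_n):\nabla(\vv_n-\vv)^{\mathrm{Lip}}$ is controlled trivially because the truncated gradient is in $L^\infty$; the threshold does not arise there.

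The paper's route is more direct and avoids both issues: it stays at the last approximation level $k\to\infty$, uses $\vv^k$ as test function in the momentum equation to obtain \eqref{e14}, runs the Minty argument exactly as in \eqref{e15}--\eqref{e20} to get weak $L^1$ convergence of $\mathcal{S}^k:\mathcal{D}(\vv^k)$, and then passes to the limit \emph{directly} in the internal energy equation \eqref{e11} as an equality. No subtraction of energies, no truncation.
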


Let us mention that the results of Theorem \ref{T1} and Theorem \ref{T3} remain true also if we replace the slip boundary condition for the velocity by the homogeneous Dirichlet conditions. Under slight modifications of the definition of the solutions, the proof is almost identical. On the other hand, the result of Theorem \ref{T2} is based on the use of the Navier boundary conditions; we need to have an integrable pressure. Since  the weak total energy solution is physically the most relevant, we prefer to perform the proof with the Navier boundary conditions. Furthermore, if $r \geq \frac{11}{5}$, the total energy balance holds as well (provided we use the Navier boundary conditions for the velocity).

The bounds for $r$ are connected with the following observations. If $r\geq \frac{3}{2}$,  the term $\vv e$ belongs to $L^1(\Omega_T;\R^3)$. In order to prove the existence of the solution, we need compactness of this term which results in the strict inequality. Similarly, for $r\geq \frac 95$, the term $|\vv|^3$ is integrable and the strict inequality is needed to verify the compactness of the sequence of the approximate solutions. The limit $r\geq \frac{11}{5}$ ensures that the velocity can be used as a test function in the weak formulation of the momentum equation \eqref{w_v} which guarantees the information that $\mathcal{S}:\mathcal{D}(\vv)$ converges weakly in $L^1(\Omega_T)$.

Note that the natural limit coming form the integrability of the convective term for the shear-thinning fluid is $r>\frac 65$ which yields the compactness of the convective term. Replacing condition \eqref{hp.kappa} by
$$
C_1 \leq \frac{\kappa^*(\cc,\theta)}{(1+\theta^\alpha)(1+\theta^{-\beta})} \leq C_2
$$
with $\alpha >0$, and modifying accordingly assumptions \eqref{hp.m}, we could reach for a suitable choice of $\alpha$ that $e\sim \theta \in L^q(\Omega_T)$ for $q>2$ for any $r>\frac 65$. Then the term $e\vv \in L^1(\Omega_T;\R^3)$ for any $r>\frac 65$ and we would prove existence of a variational energy solution for any $r>\frac 65$.

Last but not least, let us underline that if the variational energy solution is sufficiently smooth, then it is actually a classical solution to our original problem. This follows exactly as in the case of compressible Navier--Stokes--Fourier system (see \cite{Fe_Book1}).

\subsection{Known results}
Modeling of ionized mixtures is very important for the design of devices, such as fuel cells
(see eg. \cite{NemecMarsiMican:2009,WeberNewma:2004}),
 and in biology (see eg. \cite{Eisen:1979,GuLaiMow:1998,KedemKatch:1961}).
This subject has been studied from the thermodynamical point of view for a long period.
The  approach used here relies on the concept of  the barycentric velocity and has been invented by Eckart
and Prigogine, see \cite{Eckar:1940,Prigo:1967}.
We use the linear treatment of the chemical reactions, as detailed in \cite{deGroMazur:1984}.
A similar treatment of chemical reactions appears already
in \cite{NunziWalsh:1979}.
The gradient structure of chemical reactions is emphasized in \cite{Mielk:2010}, where,
in contrast to our approach, the free energy approach is employed.
The diffusion matrix has been derived in \cite{deGen:1980}.
The thermodynamical treatment is based on \cite{Oetti:2009},
which presents the mixture model in light of the GENERIC framework; for more informations
see \cite{Grmel:2010, Oetti:2005}.

The mixture theory implicitly assumes that we can find every component of the mixture in every
macroscopic point of the domain.
This assumption was used in its full strength in rational thermodynamics, see for example
\cite{MiranSchim:2005,RajagTao:1995,Samoh:2007}.
The assumption might be relaxed in the phenomenological thermodynamics framework to the assumption that chemical
potential of each constituent is defined almost everywhere in the domain.
Another relaxation of the assumption was done in \cite{Magna:2009}.
A different approach was used to obtain results for mono-atomic gases. In this case macroscopic, equations are obtained as
a limit from the system of Boltzmann's equations.
This approach was used in \cite{FerziKaper:1972} on gases without ionization or
chemical reactions. Chemical reactions were added in \cite{Giova:1999}.

In \cite{Roubi:2006,Roubi:2007} models are presented and analysed, which combine the incompressible Navier--Stokes equations with
both Newtonian and non-Newtonian stress tensor, the Nernst--Planck equation for chemically reacting, electrically charged mixtures, a
balance equation for the temperature and the Poisson equation for the electrostatic potential.
Althought the models presented in the papers are quite general, no cross-diffusion coupling is considered in the equations
for concentrations and temperature. An isothermal version of such models can be found in \cite{BulicMalekRajag:2009}.
The compressible case have been analyzed in \cite{AbelsFeire:2007,FeirePetzeTrivi:2008,MuPoZa2015}.
Alternative models for similar physical situations can be found in \cite{EPalff:1997,KimLowen:2005}.

\section{Proof of the main theorems}

In what follows we consider the approximation of our problem. We will introduce six parameters and by passing subsequently to the limits as  parameters tend either to zero or infinity we finally prove existence of a solution to our original problem. We follow the ideas developed in \cite{BuHa15}; however, certain modifications are needed due to the presence of the electrostatic field and due to the fact that we extend the results for more general class of power-law fluids, including also the shear-thinning models.

\subsection{Approximate problem}

First, we denote for $\varepsilon$ and $\delta>0$
\begin{equation} \label{a1}
s_{\vec{c}}^{*,\varepsilon}(\vec{c}) := s^*_{\cc}(\cc) + \varepsilon \sum_{i=1}^L \ln c_i,
\end{equation}
and
\begin{equation} \label{a2}
s_{\cc}^{*,\varepsilon,\delta}(\cc):= \left\{
\begin{array}{rl}
s_{\cc}^{*,\varepsilon}(\cc) & \mbox{ for } i=1,2,\dots, L: \delta \leq c_i\leq \frac{2}{\delta} \\
\mbox{concave } & \mbox{ otherwise,}
\end{array}
\right.
\end{equation}
so that $s_{\cc}^{*,\varepsilon,\delta}\in C^2(\R^L)$ and for any $\cc \in \R^L$
\begin{equation}\label{a3}
\begin{array}{c}
\displaystyle -C_1(\delta) (|\cc|^2 +1) \leq s_{\cc}^{*,\varepsilon,\delta}(\cc) \leq -C_2(\delta) |\cc|^2 + C_3, \\
\displaystyle |\{\pa_{c_ic_j}^2 s_{\cc}^{*,\varepsilon,\delta}(\cc)\}_{i,j=1}^L| (1+|\cc|) + |\pa_{\cc} s_{\cc}^{*,\varepsilon,\delta}(\cc)| \leq C(\delta) (1+|\cc|), \\
\displaystyle \sum_{i,j=1}^L\pa^2_{c_i c_j} s_{\cc}^{*,\varepsilon,\delta}(\cc) x_i x_j \leq -C(\delta) |\vec{x}|^2, \vec{x}\in \R^L.
\end{array}
\end{equation}

We further introduce the approximate chemical potential as
\begin{equation} \label{a4}
\vzeta^{*,\varepsilon}(\cc) := - \pa_{\cc} s_{\cc}^{*,\varepsilon}(\cc), \quad  \vzeta^{*,\varepsilon,\delta}(\cc) := - \pa_{\cc} s_{\cc}^{*,\varepsilon,\delta}(\cc).
\end{equation}

Note that as in \cite{BuHa15} we may show that there exists $C>0$ independent of $\varepsilon$ such that
\begin{equation} \label{a5}
|\vzeta^{*,\varepsilon}(\cc)|^2 \leq C(1+ |\mathfrak{P}_{\vec{\ell}}\,\vzeta^{*,\varepsilon}(\cc)|^2),
\end{equation}
and
\begin{equation} \label{a6}
|\vzeta^{*}(\cc)| \leq C(1+ |\vzeta^{*,\varepsilon}(\cc)|)
\end{equation}
for all $\cc\in G$. Next, we set
\begin{equation} \label{a7}
s_{e}^{*,\delta}(e):= \left\{
\begin{array}{rl}
s_{e}^{*}(e) & \mbox{ for }\delta \leq  e \leq \frac{2}{\delta}, \\
\mbox{concave, increasing } & \mbox{ otherwise,}
\end{array}
\right.
\end{equation}
where $s_e^{*,\delta}\in C^2(\R)$,
\begin{equation} \label{a8}
|\pa_e s_e^{*,\delta}(e)|+ |\pa^2_{e e} s_e^{*,\delta}(e)|\leq C(\delta).
\end{equation}
The approximate temperature is defined as
\begin{equation} \label{a9}
\theta^{*,\delta}(e):= \frac{1}{\pa_e s^{*,\delta}(e)}.
\end{equation}
Next, for $0<\delta<\frac 12$ we introduce a cut-off function
$$
T_\delta(y) := \left\{
\begin{array}{rl}
0 & y \leq \delta \\
1 & 2\delta \leq y\leq \frac 1\delta \\
0 & \frac{2}{\delta} \leq y \\
\mbox{linear} & \mbox{otherwise},
\end{array}
\right.
$$
and
$${\mathfrak T}_\delta(\cc) = \prod_{i=1}^LT_\delta(c_i)
$$
for $\cc \in \R^L$. We are now prepared to define the fluxes and the reaction term
\begin{equation} \label{a10}
\begin{array}{l}
\displaystyle {\mathfrak q}^{*,\delta}_{\cc}(e,\cc,\theta,\nabla \vzeta,\nabla \theta,\nabla \varphi) := {\mathfrak T}_\delta(\cc) T_\delta(e) {\mathfrak q}_{\cc}^*(\cc,\theta,\nabla \vzeta, \nabla \theta,\nabla \varphi), \\
\displaystyle \vc{q}^{*,\delta}_{e}(e,\cc,\theta,\nabla \vzeta,\nabla \theta,\nabla \varphi) := {\mathfrak T}_\delta(\cc) T_\delta(e) \vc{q}_e^*(\cc,\theta,\nabla \vzeta, \nabla \theta,\nabla \varphi), \\
\displaystyle \vc{r}^{*,\delta} (e,\cc,\theta,\vzeta):= {\mathfrak T}_\delta(\cc) T_\delta(e) r^*(\cc,\theta,\vzeta),\\
\displaystyle Q^{*,\delta}(\cc) := {\mathfrak T}_\delta(\cc)  \vec{z} \cdot\vec{c},
\end{array}
\end{equation}
and the boundary fluxes
\begin{equation} \label{a11}
\begin{array}{l}
\vec{q}^{\ *,\delta}_{\cc\ \Gamma}(x,e,\cc,\theta,\vzeta,\varphi) := {\mathfrak T}_\delta(\cc) T_\delta(e) \vec{q}_{\cc\ \Gamma}^{\ *}(x,\cc,\theta,\vzeta,\varphi), \\
q^{*,\delta}_{e\Gamma}(x,e,\cc,\theta,\vzeta) := {\mathfrak T}_\delta(\cc) T_\delta(e) q_{e\Gamma}^*(x,\cc,\theta), \\
q^{*,\delta}_{\varphi\Gamma}(x,\varphi) :=  q_{\varphi\Gamma}^*(x,\varphi),\\
\gamma^{*,\delta}(e,\cc,\theta) :=  {\mathfrak T}_\delta(\cc) T_\delta(e) \gamma^*(\cc,\theta).
\end{array}
\end{equation}

Next, we introduce a basis of $W^{3,2}(\Omega) \cap W^{1,r}_{\nu,\Div}(\Omega)$ consisting of functions $\{\vc{w}_i\}_{i=1}^\infty$. Notice that by density this is also a basis in $W^{1,r}_{\nu,\Div}(\Omega)$.  We assume that this basis is moreover orthonormal in $L^2(\Omega)$ and denote by $\vc{W}^n$ the linear span of $\{\vc{w}_i\}_{i=1}^n$ and  by $\vc{P}_1^n$ the orthogonal projection from $W^{3,2}_{\Div}(\Omega;\R^3) \cap W^{1,r}_{0,\nu}(\Omega)$ to $\vc{W}^n$. Similarly, we introduce a basis in $W^{1,2}(\Omega)$, orthonormal in $L^2(\Omega)$, consisting of functions $\{u_i\}_{i=1}^\infty$ and denote by $U^m$ the linear span of $\{u_i\}_{i=1}^m$ and by $P_2^m$ the orthogonal projection from $W^{1,2}(\Omega)$ to $U^m$. Finally, we introduce a nonincreasing smooth function $\xi$: $[0,\infty) \to \R_{+,0}$ such that $\xi \equiv 1$ in $[0,1]$, $\xi \equiv 0$ in $[2,\infty)$ and $0\geq \xi'\geq -2$ in $\R_+$. For $k \in \N$ we define
$$
\xi^k(y) := \xi(y/k), \quad y \in [0,\infty).
$$

We can now formulate our approximate problem:  for fixed $k,l,m,n \in \N$ and $0<\delta<\varepsilon <1$ we look for functions
$\vv^{k,n,\varepsilon, \delta, m,l}$, $\cc^{\ k,n,\varepsilon, \delta, m,l}$, $e^{k,n,\varepsilon, \delta, m,l}$, $\varphi^{k,n,\varepsilon, \delta, m,l}$ such that
$$
\begin{array}{rl}
\displaystyle \vv^{k,n,\varepsilon, \delta, m,l}(t,x) & := \sum_{i=1}^n \alpha_i^{k,n,\varepsilon, \delta, m,l}(t) \vw_i(x), \\
\displaystyle \cc^{\ k,n,\varepsilon, \delta, m,l}(t,x) & := \sum_{i=1}^m \vec{\beta}_i^{\ k,n,\varepsilon, \delta, m,l} (t) u_i(x), \\
\displaystyle e^{k,n,\varepsilon, \delta, m,l}(t,x) & := \sum_{i=1}^l \sigma_i^{k,n,\varepsilon, \delta, m,l}(t) u_i(x),
\end{array}
$$
$\alpha_i^{k,n,\varepsilon, \delta, m,l}$: $[0,T] \to \R$, $\vec{\beta}^{\ k,n,\varepsilon, \delta, m,l}_i$: $[0,T]\to \R^L$ and $\sigma_i^{k,n,\varepsilon, \delta, m,l}$: $[0,T] \to \R$ are absolutely continuous functions, where
$$
\vv^{k,n,\varepsilon, \delta, m,l}(0) := \vc{P}^n_1(\vv^0), \ \vec{c}^{\ k,n,\varepsilon, \delta, m,l}(0) := (P^m_2(c^{\delta}_0)_1, \dots, P^m_2(c^{\delta}_0)_L), \ e^{k,n,\varepsilon, \delta, m,l}(0) = P^l_2(e_0^{\varepsilon, \delta})
$$
with
$$
\vec{c}_0^{\ \delta} := \frac{\cc^{\ 0}}{1+\delta L} + \frac{\delta \vec{\ell}}{1+\delta L}, \quad e_0^{\varepsilon,\delta}:= \min \{\varepsilon^{-1},e^0\} + \delta.
$$
Furthermore, we require on $(0,T)$
\begin{equation} \label{a12}
\begin{aligned}
& \int_\Omega \pa_t \cc^{\ k,n,\varepsilon, \delta, m,l}\cdot \vec{\psi} + \varepsilon \nabla \cc^{\ k,n,\varepsilon, \delta, m,l} : \nabla \vec{\psi} - \big(\cc^{\ k,n,\varepsilon, \delta, m,l}\otimes \vv^{k,n,\varepsilon, \delta, m,l} + {\mathfrak q}^{k,n,\varepsilon, \delta,m,l}_{\cc}\big) : \nabla \vec{\psi} \dx \\
& = -  \int_{\partial \Omega} \vec{q}^{\ k,n,\varepsilon, \delta,m,l}_{\cc\,\Gamma} \cdot \vec{\psi} \dS + \int_{\Omega} \vec{r}^{\ k,n,\varepsilon, \delta,m,l} \cdot \vec{\psi} \dx
\end{aligned}
\end{equation}
for all $\vec{\psi} \in \vec{U}^m$,
\begin{equation} \label{a13}
\begin{aligned}
& \int_\Omega \pa_t \vv^{k,n,\varepsilon, \delta, m,l}\cdot \vc{u} + {\mathcal S}^{k,n,\varepsilon, \delta, m,l} : \mathcal{D} (\vc{u}) - \xi^k(|\vv^{k,n,\varepsilon, \delta, m,l}|^2)(\vc{v}^{k,n,\varepsilon, \delta, m,l}\otimes \vv^{k,n,\varepsilon, \delta, m,l}) : \nabla \vc{u} \dx \\
& = -  \int_{\partial \Omega} \gamma^{k,n,\varepsilon, \delta,m,l} \vv^{k,n,\varepsilon, \delta, m,l}\cdot \vc{u} \dS - \int_{\Omega} Q^{k,n,\varepsilon, \delta, m,l} \nabla \varphi^{k,n,\varepsilon, \delta,m,l} \cdot \vc{u} \dx
\end{aligned}
\end{equation}
for all $\vc{u} \in \vc{W}^n$,
\begin{equation} \label{a14}
\begin{aligned}
& \int_\Omega \pa_t e^{k,n,\varepsilon, \delta, m,l} \psi + \varepsilon \nabla e^{k,n,\varepsilon, \delta, m,l} \cdot \nabla \psi - \big(e^{k,n,\varepsilon, \delta, m,l}\vv^{k,n,\varepsilon, \delta, m,l} + \vc{q}_e^{k,n,\varepsilon, \delta, m,l}\big) \cdot \nabla \psi  \dx \\
= - & \int_{\partial \Omega} q_{e\Gamma}^{k,n,\varepsilon, \delta, m,l} \psi  \dS + \int_{\Omega} \big(\mathcal{S}^{k,n,\varepsilon, \delta, m,l}: \mathcal{D}(\vv^{k,n,\varepsilon, \delta,m,l}) -\vec{z} \cdot (\mathfrak{q}^{k,n,\varepsilon, \delta, m,l}_{\cc} \nabla \varphi^{k,n,\varepsilon, \delta, m,l})\big) \psi \dx
\end{aligned}
\end{equation}
for all $\psi \in {U}^l$,
\begin{equation} \label{a15}
\int_\Omega \big(\nabla \varphi^{k,n,\varepsilon, \delta, m,l}\cdot\nabla \psi - Q\psi\big)\dx = \int_{\partial \Omega} q_{\varphi \Gamma}^{k,n,\varepsilon, \delta, m,l} \psi \dS
\end{equation}
for all $\psi \in W^{1,2}(\Omega)$, where
\begin{equation} \label{a16}
{\mathfrak q}_{\cc}^{k,n,\varepsilon, \delta, m,l} := {\mathfrak q}_{\cc}^{*,\delta} (e^{k,n,\varepsilon, \delta, m,l}, \cc^{\ k,n,\varepsilon, \delta, m,l}, \theta^{k,n,\varepsilon, \delta, m,l},\nabla \vzeta^{\ k,n,\varepsilon, \delta, m,l},\nabla \theta^{k,n,\varepsilon, \delta, m,l},\nabla \varphi^{k,n,\varepsilon, \delta, m,l}),
\end{equation}
\begin{equation} \label{a17}
{\vc{q}}_{e}^{k,n,\varepsilon, \delta, m,l} := \vc{q}_{e}^{*,\delta} (e^{k,n,\varepsilon, \delta, m,l}, \cc^{\ k,n,\varepsilon, \delta, m,l}, \theta^{k,n,\varepsilon, \delta, m,l}, \nabla \vzeta^{\ k,n,\varepsilon, \delta, m,l},\nabla \theta^{k,n,\varepsilon, \delta, m,l}, \nabla \varphi^{k,n,\varepsilon, \delta, m,l}),
\end{equation}
\begin{equation} \label{a18}
{\mathcal S}^{k,n,\varepsilon, \delta, m,l} := {\mathcal S}^* ( \cc^{\ k,n,\varepsilon, \delta, m,l}, \theta^{k,n,\varepsilon, \delta, m,l}, {\mathcal D}(\vv^{k,n,\varepsilon, \delta, m,l})),
\end{equation}
\begin{equation} \label{a19}
\vec{r}^{\ k,n,\varepsilon, \delta, m,l} := \vec{r}^{\ *,\delta} (e^{k,n,\varepsilon, \delta, m,l},\cc^{\ k,n,\varepsilon, \delta, m,l},\theta^{k,n,\varepsilon, \delta, m,l},\vzeta^{\ k,n,\varepsilon, \delta, m,l}),
\end{equation}
\begin{equation} \label{a20}
Q^{k,n,\varepsilon, \delta, m,l} = Q^{*,\delta} (\cc^{\ k,n,\varepsilon, \delta, m,l}),
\end{equation}
the boundary fluxes
\begin{equation} \label{a20a}
\begin{array}{l}
\displaystyle \vec{q}_{\cc\,\Gamma}^{\ k,n,\varepsilon, \delta, m,l} := \vec{q}^{\ *,\delta}_{\cc\,\Gamma}(x,e^{k,n,\varepsilon, \delta, m,l},\cc^{\ k,n,\varepsilon, \delta, m,l},\theta^{k,n,\varepsilon, \delta, m,l},\vzeta^{\ k,n,\varepsilon, \delta, m,l},\varphi^{k,n,\varepsilon, \delta, m,l}), \\
\displaystyle q_{e\Gamma}^{k,n,\varepsilon, \delta, m,l}:= q^{*,\delta}_{e\Gamma}(x,e^{k,n,\varepsilon, \delta, m,l},\cc^{\ k,n,\varepsilon, \delta, m,l},\theta^{k,n,\varepsilon, \delta, m,l},\vzeta^{\ k,n,\varepsilon, \delta, m,l}),  \\
\displaystyle q_{\varphi\Gamma}^{k,n,\varepsilon, \delta, m,l}:= q^{*,\delta}_{\varphi\Gamma}(x,\varphi^{k,n,\varepsilon, \delta, m,l}), \\
\displaystyle \gamma^{k,n,\varepsilon, \delta, m,l} := \gamma^{*,\delta} (\cc^{\ k,n,\varepsilon, \delta, m,l},e^{k,n,\varepsilon, \delta, m,l},\theta^{k,n,\varepsilon, \delta, m,l}),
\end{array}
\end{equation}
 and, in addition,
\begin{equation} \label{a21}
\theta^{k,n,\varepsilon, \delta, m,l} :=\max\{\delta^2,\theta^{*,\delta}(e^{k,n,\varepsilon, \delta, m,l})\},
\end{equation}
\begin{equation} \label{a22}
\vzeta^{\ k,n,\varepsilon, \delta, m,l}:= \Big(P^m_2 (\zeta^{*,\varepsilon,\delta}_1(\cc^{\ k,n,\varepsilon, \delta, m,l})), \dots, P^m_2 (\zeta^{*,\varepsilon,\delta}_L(\cc^{\ k,n,\varepsilon, \delta, m,l}))\Big).
\end{equation}

We proceed as follows. In the first step we show that for a fixed electrostatic potential problem \eqref{a12}--\eqref{a14} is uniquely solvable globally in time. Then using a suitable fixed-point argument, we verify that indeed, also problem \eqref{a12}--\eqref{a15} is solvable globally in time. In both cases, the most important information used in the proof are the a priori estimates. Having solved \eqref{a12}--\eqref{a15}, we let subsequently $l \to \infty$, $m\to \infty$, $\delta \to 0^+$, $\varepsilon \to 0^+$ and $n \to \infty$. Then, we rewrite the internal energy balance into the total energy balance (for $r\geq \frac 95$) and let finally $k \to \infty$ to prove Theorems \ref{T1}--\ref{T3}.

\subsection{Solvability of the approximate problem}

We start with the first step. We fix $\varphi_0 \in L^2(0,T;W^{1,2}(\Omega))$ and consider problem \eqref{a12}--\eqref{a14} with $\varphi^{k,n,\varepsilon, \delta, m,l}$ replaced by $\varphi_0$. In what follows, we skip the indices $k,n,\varepsilon, \delta, m,l$. We also suitably mollify all functions which are only continuous in $\cc$, $\vv$ and $e$ in such a way that the main properties required in Hypotheses (H1)--(H10) needed below remain valid, and denote the mollified functions by the index $\eta$.

Note that system \eqref{a12}--\eqref{a14} is locally in time solvable due to the classical theory of systems of ODE's (the system can be rewritten as system of the first order with the right-hand side Lipschitz continuous in $\cc$, $\vv$ and $e$). In order to verify that the solution is in fact global in time, we prove estimates of solutions which will exclude the possibility of the blow-up before the time instant $t=T$. Moreover, these estimates will also provide us with important piece of information in order to verify solvability of \eqref{a12}--\eqref{a15}.

We first take as test functions in \eqref{a12} the function $\cc$, in \eqref{a13} the function $\vv$ and in \eqref{a14} the function $e$. First, for the approximate velocity we have
\begin{equation} \label{b1}
\frac 12 \frac{\mbox{d}}{\mbox{d}t} \|\vv\|_{2}^2 + \|\mathcal{S}_\eta:\mathcal{D}(\vv)\|_1 + \int_{\pa \Omega} \gamma_\eta |\vv|^2 \dS = \int_{\Omega} Q \vv \cdot \na \varphi_0 \dx.
\end{equation}
Due to the definition of $Q$ we immediately have
\begin{equation} \label{b2}
\|\vv\|^2_{L^\infty(0,T;L^2(\Omega;\R^3))} + \|\nabla \vv\|^r_{L^r(\Omega_T;\R^9)} \leq C(\|\na \varphi_0\|_{L^2(\Omega_T;\R^3)}^2 + 1).
\end{equation}

Next, \eqref{a12} and \eqref{a14} yield
\begin{equation} \label{b3}
\begin{aligned}
\frac 12 \frac{\mbox{d}}{\mbox{d}t} \|\cc\|_{2}^2 + \eps \|\na \cc\|_2^2 &\leq \Big|\int_{\Omega}  \mathfrak{q}_{\vec{c},\eta} : \na \cc + \vec{r}_\eta \cdot \vec{c} \dx -\int_{\pa \Omega} \vec{q}_{\vec{c}\, \Gamma, \eta}\cdot \cc \dS \\
\frac 12 \frac{\mbox{d}}{\mbox{d}t} \|e\|_{2}^2 + \eps \|\na e\|_2^2&\leq \Big|\int_{\Omega} \vc{q}_{e,\eta} \cdot \na e + (\mathcal{S}_\eta:\mathcal{D}(\vv))e -\vec{z}\cdot (\mathfrak{q}_{\cc,\eta} \na \varphi_0)e \dx - \int_{\pa \Omega} q_{e\Gamma,\eta} e \dS.
\end{aligned}
\end{equation}

Summing up these two identities, using the fact that $e$, $\cc$ and $\vv$ are from a finite dimensional space, we end up with
\begin{equation} \label{b4}
\|(\cc;e)\|^2_{L^\infty(0,T;L^2(\Omega;\R^{L+1}))} + \|\nabla (\cc;e)\|^2_{L^2(\Omega_T;\R^{3(L+1)})} \leq C(1 + \|\varphi_0 \|^2_{L^2(0,T;W^{1,2}(\Omega;\R^3))}).
\end{equation}
This estimate together with \eqref{b2} implies that the solution exists on the whole time interval $(0,T)$. Furthermore, it is easy to see that the operator $L$ assigning
$$
\varphi_0 \mapsto (\vv,\cc,e) \mapsto \varphi_1,
$$
where $\varphi_1$ solves \eqref{a15} with $Q = Q^{*,\delta}(\cc(\varphi_0))$, $q_{\varphi\Gamma} = q_{\varphi\Gamma}^{*,\delta}(x,\varphi_1)$, is continuous from the space $L^2(0,T;W^{1,2}(\Omega))$ to itself. Moreover, as $Q$ is bounded and $\pa_t\cc$ can be estimated in $L^2(\Omega_T;\R^L)$, the operator is compact. In order to apply a version of the Schauder fixed point theorem (sometimes called also Schaefer's theorem), it is enough to verify that the possible fixed points
$$ s L(\varphi) =\varphi$$
are bounded independently of $s \in [0,1]$. Due to the form of equation \eqref{a15} it is an easy matter to verify also this step. Then we pass to the limit with the mollifying parameter $\eta \to 0^+$. Whence, we proved existence of a solution to our system \eqref{a12}--\eqref{a15}.

\subsection{Limit passage $l \to \infty$}

Next, we aim at letting $l \to \infty$. We denote all unknown functions with the upper index $l$. We first prove estimates independent of this parameter. Note that equation \eqref{a15} provides the estimate
\begin{equation} \label{b5}
\|\varphi^l\|_{L^\infty(0,T;W^{2,q}(\Omega))} \leq C(\delta).
\end{equation}
Next, similar reasoning as in \eqref{b1}--\eqref{b2} yields
\begin{equation} \label{b6}
\|\vv^l\|_{L^\infty(0,T;L^2(\Omega;\R^3))} + \|\na \vv^l\|_{L^r(\Omega_T;\R^9)} + \|\mathcal{S}^l\|_{L^{r'}(\Omega_T;\R^9)} \leq C(\delta).
\end{equation}
Using the fact that for $\vv^l$ and $\cc^{\ l}$ all norms are equivalent, it is not difficult to see that
\begin{equation} \label{b6a}
\|(\cc^{\ l};e^l)\|^2_{L^\infty(0,T;L^2(\Omega;\R^{L+1}))} + \|\nabla (\cc^{\ l};e^l)\|^2_{L^2(\Omega_T;\R^{3(L+1)})} \leq C(m, \delta).
\end{equation}
Moreover, we also have
\begin{equation} \label{b7}
\int_0^T \|\pa_t \cc^{\ l}\|_2^2 + \|\pa_t \vv^l\|_2^2 + \|\pa_t e^l\|_{-1,2}^2 \dt \leq C(m,\delta),
\end{equation}
where the lower index $_{-1,2}$ denotes the norm in $(W^{1,2}(\Omega))'$, the dual space to  $W^{1,2}(\Omega)$.
Therefore also
\begin{equation} \label{b7a}
\int_0^T \|\nabla \pa_t \varphi\|_{L^2(\Omega;\R^3)}^2 \dt \leq C.
\end{equation}
Next, estimates above imply
\begin{equation} \label{b8}
\begin{aligned}
\int_0^T & \|\mathfrak{q}_{\cc}^l\|_2^2 + \|\vc{q}_e^l\|_2^2 + \|\vec{q}^{\ l}_{\cc\ \Gamma}\|_{L^2(\pa \Omega;\R^L)}^2 + \|q_{e\Gamma}^l\|_{L^2(\pa \Omega)}^2 \\
+& \|q_{\varphi \Gamma}^l\|_{L^\infty (\pa \Omega)}^2  + \|\vzeta^{\ l}\|_{1,2}^2 + \|\mathcal{S}^l\|_{r'}^{r'} \dt \leq C(m).
\end{aligned}
\end{equation}
Combining estimates above with the fact that the velocity and the concentrations remain in finite dimensional spaces we easily deduce that (possibly for subsequences)
\begin{equation} \label{b9}
\begin{aligned}
\vv^l \to \vv \qquad & \mbox{strongly in }C([0,T];\vc{W}^n), \\
\pa_t \vv^l \to \pa_t \vv \qquad & \mbox{weakly in } L^2(\Omega_T;\R^3), \\
\cc^{\ l} \to \cc \qquad & \mbox{strongly in } C([0,T]; (U^m)^L), \\
\pa_t \cc^{\ l} \to \pa_t \cc \qquad & \mbox{weakly in } L^2(\Omega_T;\R^3), \\
e^l \to e  \qquad & \mbox{strongly in } L^2(\Omega_T), \\
e^l \to e \qquad & \mbox{weakly in } L^2(0,T;W^{1,2}(\Omega)), \\
\pa_t e^l \to \pa_t e \qquad & \mbox{weakly in } L^2(0,T;W^{-1,2}(\Omega)), \\
\varphi^l \to \varphi \qquad & \mbox{strongly in } L^q(0,T;W^{1,q}(\Omega)), 1\leq q <\infty, \\
\varphi^l \to \varphi \qquad & \mbox{weakly in } L^q(0,T;W^{2,q}(\Omega)), 1\leq q<\infty, \\
\mathfrak{q}_{\cc}^l \to \mathfrak{q}_{\cc} \qquad & \mbox{weakly in } L^2(0,T;L^2(\Omega;\R^{3})), \\
\vc{q}_e^l \to \vc{q}_e \qquad & \mbox{weakly in } L^2(0,T;L^2(\Omega;\R^{3L})), \\
\vec{q}_{\cc\ \Gamma}^{\ l} \to \vec{q}_{\cc\ \Gamma} \qquad & \mbox{weakly in } L^2(0,T;L^2(\pa \Omega;\R^{L})), \\
{q}_{e\Gamma}^{l} \to {q}_{e\Gamma} \qquad & \mbox{weakly in } L^2(0,T;L^2(\pa \Omega)), \\
{q}_{\varphi\Gamma}^{ l} \to {q}_{\varphi\Gamma} \qquad & \mbox{weakly}*\mbox{ in } L^2(0,T;L^\infty(\pa \Omega)), \\
\vzeta^{\, l} \to \vzeta \qquad & \mbox{weakly in } L^2(0,T;W^{1,2}(\Omega;\R^L)), \\
\mathcal{S}^l \to \mathcal{S} \qquad & \mbox{weakly in } L^{r'}(\Omega_T;\R^{9}),
\end{aligned}
\end{equation}
and the limit functions fulfill for a.e. $t \in (0,T)$
\begin{equation} \label{b10}
\begin{aligned}
 \int_\Omega \pa_t \cc\cdot \vec{\psi} + \varepsilon \nabla \cc : \nabla \vec{\psi} - \big(\cc\otimes \vv + {\mathfrak q}_{\cc}\big) : \nabla \vec{\psi} \dx
= -  \int_{\partial \Omega} \vec{q}_{\cc\,\Gamma} \cdot \vec{\psi} \dS + \int_{\Omega} \vec{r} \cdot \vec{\psi} \dx
\end{aligned}
\end{equation}
for all $\vec{\psi} \in \vec{U}^m$,
\begin{equation} \label{b11}
\begin{aligned}
 \int_\Omega \pa_t \vv\cdot \vc{u} + {\mathcal S} : \mathcal{D} (\vc{u}) - \xi^k(|\vv|^2)(\vc{v}\otimes \vv) : \nabla \vc{u}  \dx
= -  \int_{\partial \Omega} \gamma\vv\cdot \vc{u} \dS - \int_{\Omega} Q\nabla \varphi \cdot \vc{u} \dx
\end{aligned}
\end{equation}
for all $\vc{u} \in \vc{W}^n$,
\begin{equation} \label{b12}
\begin{aligned}
& \langle \pa_t e, \psi\rangle_{W^{-1,2}(\Omega);W^{1,2}(\Omega)} + \int_{\Omega}\varepsilon \nabla e\cdot \nabla \psi - \big(e\vv + \vc{q}_e\big) \cdot \nabla \psi   \dx  \\
&= - \int_{\partial \Omega} q_{e\Gamma} \psi  \dS + \int_{\Omega} \big(\mathcal{S}: \mathcal{D}( \vv) - \vec{z} \cdot (\mathfrak{q}_{\cc} \nabla \varphi)\big) \psi \dx
\end{aligned}
\end{equation}
for all $\psi \in W^{1,2}(\Omega)$, $\lim_{t\to 0^+} e(t) = e_{0}^{\eps,\delta}$ in $L^2(\Omega)$ while for $\cc$ and $\vv$ the initial conditions are fulfilled in the same sense as above,
\begin{equation} \label{b13}
\int_\Omega \nabla \varphi\cdot\nabla \psi - Q\psi\dx = \int_{\partial \Omega} q_{\varphi \Gamma} \psi \dS
\end{equation}
for all $\psi \in W^{1,2}(\Omega)$. It is an easy matter to verify at this moment that
\begin{equation} \label{b14}
{\mathfrak q}_{\cc} = {\mathfrak q}_{\cc}^{*,\delta} (e, \cc,\theta,\nabla \vzeta,\nabla \theta,\nabla \varphi),
\end{equation}
\begin{equation} \label{b15}
{\vc{q}}_{e} = \vc{q}_{e}^{*,\delta} (e, \cc,\theta,\nabla \vzeta,\nabla \theta, \nabla \varphi),
\end{equation}
\begin{equation} \label{b16}
{\mathcal S} = {\mathcal S}^* (\cc,\theta,{\mathcal D}(\vv)),
\end{equation}
\begin{equation} \label{b17}
\vec{r} = \vec{r}^{\ *,\delta} (e,\cc,\theta,\vzeta),
\end{equation}
\begin{equation} \label{b18}
Q = Q^{*,\delta} (\cc),
\end{equation}
the boundary fluxes
\begin{equation} \label{b19}
\begin{array}{l}
\displaystyle \vec{q}_{\cc\,\Gamma} = \vec{q}^{\ *,\delta}_{\cc\,\Gamma}(x,e,\cc,\theta,\vzeta,\varphi), \\
\displaystyle q_{e\Gamma}= q^{*,\delta}_{e\Gamma}(x,e,\cc,\theta,\vzeta),  \\
\displaystyle q_{\varphi\Gamma}= q^{*,\delta}_{\varphi\Gamma}(x,\varphi), \\
\displaystyle \gamma = \gamma^{*,\delta} (e,\cc,\theta),
\end{array}
\end{equation}
 and, in addition,
\begin{equation} \label{b20}
\theta =\max\{\delta^2,\theta^{*,\delta}(e)\},
\end{equation}
\begin{equation} \label{b21}
\vzeta= \Big(P^m_2 (\zeta^{*,\varepsilon,\delta}_1(\cc)), \dots, P^m_2 (\zeta^{*,\varepsilon,\delta}_L(\cc))\Big).
\end{equation}

\subsection{Limit passage $m \to \infty$}

First, note that we can show the minimum principle for $e^m$. Using as test function in \eqref{b12} the function $\psi = \min\{0,e^m-\delta\}$ and recalling that for $e^m \leq \delta$ the fluxes are zero due to the cut-off function $T_\delta(e)$, we get, thanks to the form of the initial condition, that
$$
\frac{{\rm d}}{{\rm d}t} \|\min\{0,e^m-\delta\}\|_2^2 \leq 0,
$$
hence $e^m \geq \delta$ a.e. in $\Omega_T$. Therefore, for $\delta$ sufficiently small, we have
$$
\theta^m = \theta^{*,\delta}(e^m).
$$
Note that estimates \eqref{b5}--\eqref{b6} remain valid (with $l$ replaced by $m$). Next we  would like to obtain similar estimates as above in order to let $m \to \infty$. Even though inspired by \cite{BuHa15}, we have to modify the procedure as we are not able to estimate the internal energy independently of $m$ and the estimate of the total energy is  not yet available at this moment. Anyway, we first compute an estimate of the internal energy and later on, we will estimate the term which will remain on the right-hand side.

Let us choose $\psi = \chi_{[0,t]\times\Omega}$ (the characteristic function of $[0,t]\times \Omega$) in \eqref{b12}. Indeed, we are not allowed to do so directly, but after smoothing this function in time we get
\begin{align}\label{int.e}
\int_{\Omega}e^{m}(t)\dx &= \int_{\Omega}e_{0}^{\eps,\delta}\dx -\int_0^T\int_{\Gamma}q_{e\Gamma}^m\dS\dtau
+\int_{\Omega_T}\mathcal{S}^m : \mathcal{D}(\vv^{m}) - \vec{z}\cdot(\mathfrak{q}_{\cc}^m\na\varphi^{m})\dx\dtau.
\end{align}
Since $\vc{q}_{\cc}^{i,m} = -T_{\delta}(e^m)\mathfrak{T}_{\delta}(\vec{c}^{\ m})\big( \sum_{j=1}^{L}M_{ij}(\na \zeta^m_{j} + (\theta^m)^{-1}z_{j}\na\varphi^m )
-m_{i}\na((\theta^m)^{-1}) \big)$ and $\theta^{m}$ is strictly positive,
we can write $|\mathfrak{q}_{\cc}^m|\leq C_{1} + C_{2}T_{\delta}(e^m)\mathfrak{T}_{\delta}(\vec{c}^{\ m})(|\na\vzeta^{\ m}| + |\na \theta^{m}|)$. Thus
\begin{align}\label{int.e.2}
& \int_{\Omega}e^{m}(t)\dx \leq C\left( 1 + \int_{\Omega_T}T_{\delta}(e^m)\mathfrak{T}_{\delta}(\vec{c}^{\ m})(|\na (\vzeta^{\ m})| + |\na \theta^{m}|)\dx\dtau \right).
\end{align}
Now, choose $\psi = \frac{\pa s_{e}^{*,\delta}}{\pa e}(e^{m})$ in \eqref{b12}. It follows
\begin{align*}
& \frac{d}{dt}\int_{\Omega}s_{e}^{*,\delta}(e^{m})\dx = \left\langle\frac{\pa e^{m}}{\pa t}, \frac{\pa s_{e}^{*}}{\pa e}(e^{m})\right\rangle
= -\eps\int_{\Omega}\na e^{m}\cdot\na\frac{1}{\theta^{m}}\dx + \int_{\Omega}(e^{m}\vv^{m} + \vc{q}_{e}^m)\cdot\na\frac{1}{\theta^{m}}\dx\\
&\qquad -\int_{\pa\Omega}q_{e\Gamma}^m\frac{1}{\theta^{m}}\dS
+ \int_{\Omega} \frac{1}{\theta^{m}}\mathcal{S}^m:\mathcal{D} (\vv^{m}) - \frac{1}{\theta^{m}}\vec{z}\cdot(\mathfrak{q}_{\cc}^m\na\varphi^{m})  \dx .
\end{align*}
It holds
$$ -\eps\int_{\Omega}\na e^{m}\cdot\na\frac{1}{\theta^{m}}\dx = -\eps\int_{\Omega}\frac{\pa^{2} s_{e}^{*,\delta}}{\pa e^{2}}|\na e^{m}|^{2}dx\geq 0 $$
because $s_{e}^{*,\delta}$ is concave. Moreover, as $\Div \vv^m =0$,
\begin{align*}
\int_{\Omega}e^{m}\vv^{m}\cdot\na\frac{1}{\theta^{m}}\dx = 0.
\end{align*}
Furthermore,
\begin{align*}
\int_{\Omega}\vc{q}_{e}^m\cdot\na\frac{1}{\theta^{m}}\dx
&= \int_{\Omega}T_{\delta}(e^m)\mathfrak{T}_{\delta}(\vec{c}^{\ m})\frac{\kappa|\na\theta^{m}|^{2}}{|\theta^{m}|^{2}}\dx\\
&\qquad -\int_{\Omega}T_{\delta}(e^m)\mathfrak{T}_{\delta}(\vec{c}^{\ m})\sum_{i=1}^{L}m_{i}\left( \na\zeta_{i}^m + \frac{z_{i}}{\theta^{m}}\na\varphi^{m} \right)\cdot
\na\frac{1}{\theta^{m}}\dx.
\end{align*}
What's more, since $a(a-b)\geq -b^{2}/2$ for any $a,b\in\R$,
\begin{align}\label{qeG.1}
-\int_{\pa\Omega}q_{e\Gamma}^m\frac{1}{\theta^{m}}\dS &=
\int_{\pa\Omega}\kappa_{\Gamma}\frac{1}{\theta^{m}}\left( \frac{1}{\theta^{m}} - \frac{1}{\theta^{\Gamma}} \right)\dS
\geq -\frac{1}{2}\int_{\pa\Omega}\frac{\kappa_{\Gamma}}{(\theta^{\Gamma})^{2}}\dS \geq -C.
\end{align}
Then, observe that
\begin{align}\label{Snav.1}
& \int_{\Omega}\frac{1}{\theta^{m}}\mathcal{S}^m:\mathcal{D}(\vv^{m})\dx \geq 0,
\end{align}
and
\begin{align*}
 -\int_{\Omega}\frac{1}{\theta^{m}}(\vec{z}\cdot\mathfrak{q}_{\cc}^m\na\varphi^{m}) \dx
 =&\int_{\Omega}T_{\delta}(e^m)\mathfrak{T}_{\delta}(\vec{c}^{\ m}) \mathfrak{M}\Big( \na\vzeta^{\ m}  +\frac{\vec{z}}{\theta^{m}}\na \varphi^{m} \Big):
\frac{\vec{z}}{\theta^{m}}\,\na\varphi^{m} \\ + &\vec{m}\cdot \vec{z}\, \na\frac{1}{\theta^{m}}\cdot\na\varphi^{m}\frac{1}{\theta^{m}}\dx .
\end{align*}
Putting together the previous relations yields
\begin{align}\label{int.se}
& \frac{{\rm d}}{{\rm d}t}\int_{\Omega}s_{e}^{*,\delta}(e^{m})\dx \geq -C
+ \int_{\Omega}T_{\delta}(e^m)\mathfrak{T}_{\delta}(\vec{c}^{\ m})\left(
\frac{\kappa|\na\theta^{m}|^{2}}{|\theta^{m}|^{2}} + \sum_{i,j=1}^{L}\mathfrak{M}:(\vec{z}\otimes\vec{z})\left|\frac{\na\varphi^m}{\theta^{m}}\right|^{2}
\right)\dx\\
& +\int_{\Omega}T_{\delta}(e^m)\mathfrak{T}_{\delta}(\vec{c}^{\ m})\left(
-\sum_{i=1}^L m_i  \na(\zeta_{i}^{m})\cdot\na\frac{1}{\theta^{m}}
+ \sum_{i,j=1}^{L}M_{ij} \na\zeta_{j}^{m}\cdot\frac{z_{i}}{\theta^{m}}\na\varphi^{m} \right)\dx .\nonumber
\end{align}
Next, take $\vec\psi =-\vzeta^{\ m} =  -P_{2}^m(\vzeta^{\ *}(\vec c^{\ m}))$ in \eqref{b10}. As it is an appropriate test function (therefore the projection was needed), it follows
\begin{align*}
& \frac{{\rm d}}{{\rm d}t}\int_{\Omega}s_{\cc}^{*,\eps,\delta}(\vec {c}^{\ m})\dx =
-\int_{\Omega}\pa_{t}\vec{c}^{\ m}\cdot \vzeta^{\ *}(\vec {c}^{\ m})\dx = -\int_{\Omega}\pa_{t}\vec{c}^{\ m}\cdot P_{2}^m(\vzeta^{\ *}(\vec {c}^{\ m}))\dx=\\
&\int_{\Omega}\eps \na\vec{c}^{\ m} : \na (P_{2}^m(\vzeta^{\ *}(\vec {c}^{\ m})))\
-(\vec{c}^{\ m}\otimes \vv^m):\na(P_{2}^m(\vzeta^{\ *}(\vec {c}^{\ m}))) - \mathfrak{q}_{\cc}:\na(P_{2}^m(\vzeta^{\ *}(\vec {c}^{\ m})))\dx\\
&\qquad +\int_{\pa\Omega}\vec{q}_{\vec{c}\ \Gamma}^{\ m}\cdot P_{2}^m(\vzeta^{\ *}(\vec {c}^{\ m}))\dS
- \int_{\Omega}\vec{r}^{\ m}\cdot P_{2}^m(\vzeta^{\ *}(\vec {c}^{\ m}))\dx .
\end{align*}
The orthogonality of the basis functions $(u_{i})_{i\in\N}$ in $L^{2}(\Omega)$ and $W^{1,2}(\Omega)$ as well as the strict concavity of $s_{\cc}^{*,\eps,\delta}$ imply
\begin{align*}
& \eps\int_{\Omega}\na\vec c^{\ m} : \na (P_{2}^m(\vzeta^{\ *}(\vec c^{\ m})))\dx \\
= &\eps\int_{\Omega}\na\vec c^{\ m} : \na (\vzeta^{\ *}(\vec c^{\ m}))\dx
= -\eps\int_{\Omega}\na\vec c^{\ m} : \frac{\pa^{2}s_{\cc}^{*,\eps,\delta}}{\pa \cc^{\ 2}}\na \cc^{\ m}\dx
\geq \eps C\|\na \cc^{\ m}\|_{2}^{2}.
\end{align*}
Moreover, the boundedness of $P_{2}^m$ in $L^{2}(\Omega)$ and the fact that $|\cc^{\ m}|^{2}\leq C(1 + |s_{\cc}^{*,\eps,\delta}(\cc^{\ m})| )$ lead to
\begin{align*}
& -\int_{\Omega}(\vec{c}^{\ m}\otimes\vc{v}^m):\na(P_{2}^m(\vzeta^{\ *}(\vec c^{\ m})))\dx
\geq -\|\vv^{m}\|_{\infty}\|\cc^{\ m}\|_{2}\|\na P_{2}^m(\vzeta^{\ *}(\cc^{\ m}))\|_{2}\\
&\qquad\geq -C\|\vv^{m}\|_{\infty}\|\cc^{\ m}\|_{2}\|\na\vzeta^{\ *}(\cc^{\ m})\|_{2}
\geq -C\|\vv^{m}\|_{\infty}(1+\|s_{\cc}^{*,\eps,\delta}\|_{1}^{1/2})(1+\|\na \cc^{\ m}\|_{2}).
\end{align*}
Furthermore
\begin{align*}
& - \int_{\Omega}\mathfrak{q}^m_{\cc}:\na(P_{2}^m(\vzeta^{\ *}(\vec c^{\ m})))\dx \\
&= \int_{\Omega}T_{\delta}(e^m)\mathfrak{T}_{\delta}(\cc^{\ m})\Big(
\sum_{i,j=1}^{L}M_{ij}\Big( \na\big(P_{2}^m(\zeta_{j}^{*}(\cc^{\ m}))\big)
+ \frac{z_{j}}{\theta^{m}}\na\varphi^m \Big)\cdot\na\big(P_{2}^m(\zeta_{i}^{*}(\cc^{\ m}))\big)\\
&+\sum_{i=1}^{L}m_{i}\na\frac{1}{\theta^{m}}\cdot\na\big(P_{2}^m(\zeta_{i}^{*}(\cc^{\ m}))\big)\Big)\dx\\
&= \int_{\Omega}T_{\delta}(e^m)\mathfrak{T}_{\delta}(\cc^{\ m})\sum_{i,j=1}^{L}M_{ij}\na\big(P_{2}^m(\zeta_{i}^{*}(\cc^{\ m}))\big)\cdot\na\big(P_{2}^m(\zeta_{j}^{*}(\cc^{\ m}))\big)\dx\\
& +\int_{\Omega}T_{\delta}(e^m)\mathfrak{T}_{\delta}(\cc^{\ m})\Big(
\sum_{i,j=1}^{L}M_{ij}\frac{z_{j}}{\theta^{m}}\na\varphi^m\cdot\na\big(P_{2}^m(\zeta_{i}^{*}(\cc^{\ m}))\big)
\\
&+\sum_{i=1}^{L}m_{i}\na\frac{1}{\theta^{m}}\cdot\na\big(P_{2}^m(\zeta_{i}^{*}(\cc^{\ m}))\big)\Big)\dx .
\end{align*}
What's more, again due to the elementary fact that $a(a-b)\geq -b^{2}/2$ for any $a,b\in\R$,
\begin{align}\label{qcG.1}
& \int_{\pa\Omega}\vec{q}_{\vec{c}\ \Gamma}^{\ m}\cdot P_{2}^m(\vzeta^{\ *}(\cc^{\ m}))\dS = \int_{\pa\Omega}\vec{q}_{\vec{c}\ \Gamma}^{\ m}\cdot \vzeta^{\ m}\dS \\
\nonumber & \qquad = \sum_{j=1}^{L}\int_{\pa\Omega}T_{\delta}(e^m)\mathfrak{T}_{\delta}(\cc^{\ m})D_{ij}\left(\zeta_{j}^{m} - \zeta_{j}^{\Gamma} + \frac{z_{j}}{\theta^{\Gamma}}(\varphi^m - \varphi^{\Gamma})\right)
\zeta_{j}^{m}\dx\\
\nonumber
&\qquad\geq -\frac{1}{2}\sum_{j=1}^{L}\int_{\pa\Omega}T_{\delta}(e^m)\mathfrak{T}_{\delta}(\cc^{\ m})\left(
\zeta_{j}^{\Gamma} - \frac{z_{j}}{\theta^{\Gamma}}(\varphi^m - \varphi^{\Gamma})\right)^{2}\dx\geq -C.
\end{align}
Moreover, by assumption,
\begin{align*}
& - \int_{\Omega}\vec{r}^{\ m}\cdot \vzeta^{\ m}\dx\geq 0.
\end{align*}
So, collecting all these inequalities yields
\begin{align}\label{int.sc}
& \frac{{\rm d}}{{\rm d}t} \int_{\Omega}s_{\cc}^{*,\eps,\delta}(\vec c^{\ m})\dx\geq -C
+\eps C\|\na \cc^{\ m}\|_{2}^{2}
-C\|\vv^{m}\|_{\infty}(1+\|s_{\cc}^{*,\eps,\delta}\|_{1}^{1/2})(1+\|\na \cc^{\ m}\|_{2})\\
\nonumber
&\qquad +\int_{\Omega}T_{\delta}(e^m)\mathfrak{T}_{\delta}(\cc^{\ m})\sum_{i,j=1}^{L}M_{ij}\na\zeta_{i}^{m}\cdot\na\zeta_{j}^{m}\dx\\
\nonumber
&\qquad +\int_{\Omega}T_{\delta}(e^m)\mathfrak{T}_{\delta}(\cc^{\ m})\Big(
\sum_{i,j=1}^{L}M_{ij}\frac{z_{j}}{\theta^{m}}\na\varphi^m\cdot\na\zeta_{i}^{m}
+\sum_{i=1}^{L}m_i\na\frac{1}{\theta^{m}}\cdot\na\zeta_{i}^{m}\Big)\dx  .
\end{align}
We get, summing \eqref{int.se} and \eqref{int.sc}
\begin{align*}
 \frac{{\rm d}}{{\rm d}t} &\int_{\Omega}s_{e}^{*,\delta}(e^{m}) + s_{\cc}^{*,\eps,\delta}(\cc^{\ m})\dx \\
&\geq -C +\eps C\|\na \cc^{\ m}\|_{2}^{2} -C\|\vv^{m}\|_{\infty}(1+\|s_{\cc}^{*,\eps,\delta}\|_{1}^{1/2})(1+\|\na \cc^{\ m}\|_{2})\\
& + \int_{\Omega}T_{\delta}(e^m)\mathfrak{T}_{\delta}(\vec{c}^{\ m})\Big(
\frac{\kappa|\na\theta^{m}|^{2}}{|\theta^{m}|^{2}} \\
& +\sum_{i,j=1}^{L}M_{ij}\left( \na\zeta_{i}^{\ m} + \frac{z_{i}}{\theta^{m}}\na\varphi^{m} \right)\cdot
\left( \na\zeta_{j}^{\ m} + \frac{z_{j}}{\theta^{m}}\na\varphi^{m} \right)
\Big)\dx.
\end{align*}
Young's inequality and the fact that of $s_{e}^{*}(e)$ is nonnegative
(see \eqref{prop.se2}) imply
\begin{align*}
& -C + \eps C\|\na \cc^{\ m}\|_{2}^{2} -C\|\vv^{m}\|_{\infty}(1+\|s_{\cc}^{*,\eps,\delta}(\cc^{\ m})\|_{1}^{1/2})(1+\|\na \cc^{\ m}\|_{2})\\
&\qquad\geq\eps \|\na \cc^{\ m}\|_{2}^{2} - \eps^{-1}C\left( 1 + \int_{\Omega}s_{\cc}^{*,\eps,\delta}(\cc^{\ m})\dx \right)\\
&\qquad\geq \eps \|\na \cc^{\ m}\|_{2}^{2} - \eps^{-1}C_{1}\left( 1 + \int_{\Omega}s_{\cc}^{*,\eps,\delta}(\cc^{\ m}) + s_{e}^{*,\delta}(e^{m})\dx \right).
\end{align*}
So we conclude
\begin{align}\label{eq.F}
& \frac{{\rm d}}{{\rm d}t} F + \eps^{-1}C_{1}F\geq
\eps \|\na \cc^{\ m}\|_{2}^{2}
+ \int_{\Omega}T_{\delta}(e^m)\mathfrak{T}_{\delta}(\vec{c}^{\ m})\frac{\kappa|\na\theta^{m}|^{2}}{|\theta^{m}|^{2}}\dx\\
\nonumber
& + \int_{\Omega}T_{\delta}(e^m)\mathfrak{T}_{\delta}(\vec{c}^{\ m})
\sum_{i,j=1}^{L}M_{ij}\left( \na\zeta_{i}^{m} + \frac{z_{i}}{\theta^{m}}\na\varphi^{m} \right)\cdot
\left( \na\zeta_{j}^{m} + \frac{z_{j}}{\theta^{m}}\na\varphi^{m} \right)\dx,
\end{align}
where
$$
F(t) \equiv 1 + \int_{\Omega}s_{e}^{*,\delta}(e^{m}(t)) + s_{\cc}^{*,\eps,\delta}(\cc^{\ m}(t))\dx .
$$
We get the following upper bound
\begin{align*}
&\eps \int_{0}^{t}\int_{\Omega}|\na \cc^{\ m}|^{2}\dx\dtau
+ \int_{0}^{t}\int_{\Omega}T_{\delta}(e^m)\mathfrak{T}_{\delta}(\vec{c}^{\ m})\frac{\kappa|\na\theta^{m}|^{2}}{|\theta^{m}|^{2}}\dx\dtau\\
\nonumber
& + \int_{0}^{t}\int_{\Omega}T_{\delta}(e^m)\mathfrak{T}_{\delta}(\vec{c}^{\ m})
\sum_{i,j=1}^{L}M_{ij}\left( \na\zeta_{i}^{m} + \frac{z_{i}}{\theta^{m}}\na\varphi^{m} \right)\cdot
\left( \na\zeta_{j}^{m} + \frac{z_{j}}{\theta^{m}}\na\varphi^{m} \right)\dx\dtau\\
\nonumber
&\qquad\leq e^{C t/\eps}\left(1 + \int_{\Omega}(s_{e}^{*,\delta}(e^{m}(t)) + s_{\cc}^{*,\eps,\delta}(\cc^{\ m}(t)))\dx\right),\qquad t>0.
\end{align*}
We know that $s_{e}^{*,\delta}(e)\leq K(1+e)$. Let $\widehat {\cc} \in\R^{L}_{+}$ be generic but fixed. Since $s_{\cc}^{*,\eps,\delta}$ is concave,
$$
s_{\cc}^{*}(\cc)\leq s_{\cc}^{*}(\widehat {\cc}) + \pa_{\cc} s_{\cc}^{*}(\widehat {\cc})\cdot (\cc - \widehat{\cc} ) \leq K(1 + |\cc|),\qquad \cc\in\R^{L}_{+},
$$
for some constant $K>0$ depending on $\widehat {\cc}$. As a consequence, from the above bounds and Young's inequality, we obtain
\begin{align}\label{up}
&\eps \int_{0}^{t}\int_{\Omega}|\na \cc^{\ m}|^{2}\dx\dtau
+ \int_{0}^{t}\int_{\Omega}T_{\delta}(e^m)\mathfrak{T}_{\delta}(\vec{c}^{\ m})\frac{\kappa|\na\theta^{m}|^{2}}{|\theta^{m}|^{2}}\dx\dtau\\
\nonumber
& + \int_{0}^{t}\int_{\Omega}T_{\delta}(e^m)\mathfrak{T}_{\delta}(\vec{c}^{\ m})
\sum_{i,j=1}^{L}M_{ij}\left( \na\zeta_{i}^{m} + \frac{z_{i}}{\theta^{m}}\na\varphi^{m} \right)\cdot
\left( \na\zeta_{j}^{m} + \frac{z_{j}}{\theta^{m}}\na\varphi^{m} \right)\dx\dtau\\
\nonumber
&\qquad\leq e^{C t/\eps}\left(\lambda^{-1} + \lambda\int_{\Omega}(|e^{m}(t)|^{2} + |\cc^{\ m}(t)|^{2})\dx\right),\qquad t>0,
\end{align}
for any $\lambda\in (0,1)$.

Let us choose $\vec\psi = \vec c^{\ m}$ in \eqref{b10} and $\psi=e^{m}$ in \eqref{b12}. It follows
\begin{align}\label{tmp.ce}
& \frac{{\rm d}}{{\rm d}t}\int_{\Omega}|e^{m}|^{2} + |\cc^{\ m}|^{2}\dx + \eps\int_{\Omega}|\na \cc^{\ m}|^{2} + |\na e^{m}|^{2}\dx\leq
\int_{\Omega}\vc{q}_{e}^m\cdot\na e^{m}\dx\\
\nonumber
&+\int_{\pa\Omega}q_{e\Gamma}^m e^m\dS + \int_{\Omega}e^{m}\mathcal{S}^m:\mathcal{D} (\vv^{m})\dx
-\int_{\Omega}e^{m}\vec{z}\cdot(\mathfrak{q}_{\cc}^m\na\varphi^{m})\dx \\
\nonumber
&+ \int_{\Omega}\mathfrak{q}_{\cc}^m:\na \cc^{\ m}\dx-\int_{\pa\Omega}\vec{q}_{\cc \ \Gamma}^{\ m}\cdot\vec{c}^{\ m}\dS + \int_{\Omega}\vec{r}^{\ m}\cdot\vec{c}^{\ m}\dx .
\end{align}
Let us estimate the terms on the right-hand side of \eqref{tmp.ce}. The lower bound for $\theta^{m}$ and Young's inequality imply
\begin{align*}
& \int_{\Omega}\vc{q}_{e}^m\cdot\na e^{m}\dx
= -\int_{\Omega}T_{\delta}(e^m)\mathfrak{T}_{\delta}(\cc^{\ m})\kappa\na\theta^{m}\cdot\na e^{m}\dx\\
&\qquad -\int_{\Omega}T_{\delta}(e^m)\mathfrak{T}_{\delta}(\cc^{\ m})\sum_{i=1}^{L} m_{i}\left( \na\zeta_{i}^m + \frac{z_{i}}{\theta^{m}}\na\varphi^{m} \right)\cdot\na e^{m}\dx\\
&\leq K\int_{\Omega}T_{\delta}(e^m)\mathfrak{T}_{\delta}(\cc^{\ m})\left| \na\vzeta^{\ m} + \frac{\vec{z}}{\theta^{m}}\na\varphi^{m} \right|^{2}\dx .
\end{align*}
Again, Young's inequality implies ($\eta\in(0,1)$ will be specified later)
\begin{align*}
&\int_{\pa\Omega}q_{e\Gamma}^m e^m\dS = -\int_{\Omega}\kappa\left(\frac{1}{\theta^{m}} - \frac{1}{\theta^{\Gamma}} \right)e^m\dS\leq
K\left(\eta^{-1}+\eta\int_{\Omega}|\na e^{m}|^{2}\dx\right),\\
&\int_{\Omega}e^{m}\mathcal{S}^m:\mathcal{D} (\vv^{m})\dx \leq K\left(\eta^{-1}+\eta\int_{\Omega}|\na e^{m}|^{2}\dx\right).
\end{align*}
Furthermore, since $e/\theta(e)$ is bounded as $e\to\infty$ and $\theta^{m}$ is strictly positive, we deduce
\begin{align*}
& -\int_{\Omega}e^{m}\vec{z}\cdot(\mathfrak{q}_{\cc}^m\na\varphi^{m})\dx =
\int_{\Omega}T_{\delta}(e^m)\mathfrak{T}_{\delta}(\cc^{\ m})e^{m}\sum_{i,j=1}^{L}z_{i}M_{ij}\left( \na\zeta_{j}^m + \frac{z_{j}}{\theta^{m}}\na\varphi^{m} \right)\na\varphi^{m}\dx\\
&\qquad -\int_{\Omega}T_{\delta}(e^m)\mathfrak{T}_{\delta}(\cc^{\ m})\sum_{i=1}^{L}m_{i}z_{i}\frac{e^{m}}{(\theta^{m})^{2}}\na\theta^{m}\cdot \na\varphi^{m}\dx\\
&\leq K\left(\eta^{-1} + \eta\int_{\Omega} |\na e^{m}|^{2}\dx
+\int_{\Omega}T_{\delta}(e^m)\mathfrak{T}_{\delta}(\cc^{\ m})\left| \na\vzeta^{\ m} + \frac{\vec{z}}{\theta^{m}}\na\varphi^{m} \right|^{2}\dx \right).
\end{align*}
Moreover,
\begin{align*}
& \int_{\Omega}\mathfrak{q}_{\cc}^m:\na \cc^{\ m}\dx
= -\int_{\Omega}T_{\delta}(e^m)\mathfrak{T}_{\delta}(\cc^{\ m})\sum_{i,j=1}^{L}M_{ij}\left( \na\zeta_{j}^m + \frac{z_{j}}{\theta^{m}}\na\varphi^{m} \right)\cdot\na c_{i}^{m}\dx\\
&-\int_{\Omega}T_{\delta}(e^m)\mathfrak{T}_{\delta}(\cc^{\ m})\sum_{i=1}^{L}m_{i}\na\frac{1}{\theta^{m}}\cdot\na c^{m}_{i}\dx \\
&\leq K\left( \eta^{-1} + \eta^{-1}\int_{\Omega}|\na \cc^{\ m}|^{2}\dx + \eta\int_{\Omega}|\na e^{m}|^{2}\dx \right).
\end{align*}
Furthermore,
\begin{align*}
& -\int_{\pa\Omega}\vec{q}_{\cc\ \Gamma}^{\ m}\cdot\vec{c}^{\ m}\dS = \int_{\pa\Omega}\mathfrak{D}\left( \vzeta^{\ m}-\vzeta^{\ \Gamma} + \frac{\vec{z}}{\theta^{m}}(\varphi^{m}-\varphi^{\Gamma}) \right)\cdot \cc^{\ m}\dS\\
&\leq K\left( 1 + \int_{\Omega}|\na \cc^{\ m}|^{2} + |\cc^{\ m}|^2 \dx \right),
\end{align*}
and
\begin{align*}
&  \int_{\Omega}\vec{r}^{\ m}\cdot\vec{c}^{\ m}\dx\leq K\left( 1 + \int_{\Omega}|\cc^{\ m}|^{2}\dx \right).
\end{align*}
Putting all the previous inequalities together yields, for any $\eta\in (0,1)$
\begin{align*}
& \frac{{\rm d}}{{\rm d}t} \int_{\Omega}|e^{m}|^{2} + |\cc^{\ m}|^{2}\dx + \eps\int_{\Omega}|\na \cc^{\ m}|^{2} + |\na e^{m}|^{2}\dx\\
& \leq K_{1}\int_{\Omega}|e^{m}|^{2} + |\cc^{\ m}|^{2}\dx
+K_{2}\left(\eta^{-1} + \eta^{-1}\int_{\Omega}|\na \cc^{\ m}|^{2}\dx + \eta\int_{\Omega}|\na e^{m}|^{2} \dx\right)\\
&\qquad + K_{3}\int_{\Omega}T_{\delta}(e^m)\mathfrak{T}_{\delta}(\cc^{ m})\left| \na\vzeta^{\ m} + \frac{\vec{z}}{\theta^{m}}\na\varphi^{m} \right|^{2}\dx .
\end{align*}
Choosing $\eta<\eps/K_{2}$ leads to
\begin{align*}
& \frac{{\rm d}}{{\rm d}t} \int_{\Omega}|e^{m}|^{2} + |\cc^{\ m}|^{2}\dx
\leq K_{1}\int_{\Omega}|e^{m}|^{2} + |\cc^{\ m}|^{2}\dx + K_{2}'\int_{\Omega}|\na \cc^{\ m}|^{2}\dx \\
&\qquad + K_{3}\int_{\Omega}T_{\delta}(e^m)\mathfrak{T}_{\delta}(\cc^{\ m})\left| \na\vzeta^{\ m} + \frac{\vec{z}}{\theta^{m}}\na\varphi^{m} \right|^{2}\dx .
\end{align*}
Applying Gronwall's inequality we deduce
\begin{align}\label{bound.ec2}
& \int_{\Omega}|e^{m}(t)|^{2} + |\cc^{\ m}(t)|^{2}\dx \leq K e^{K_{1}t} \left(1 + \int_{0}^{t}\int_{\Omega}|\na \cc^{\ m}|^{2}\dx\dtau\right.\\
\nonumber
&\left.\qquad + \int_{0}^{t}\int_{\Omega}T_{\delta}(e^m)\mathfrak{T}_{\delta}(\cc^{\ m})\left| \vzeta^{\ m} + \frac{\vec{z}}{\theta^{m}}\na\varphi^{m} \right|^{2}\dx\dtau \right),
\qquad t>0.
\end{align}
Putting \eqref{up} and \eqref{bound.ec2} together and choosing $\lambda>0$ small enough, we conclude
\begin{align}\label{up.2}
&\sup_{t \in (0,T)} \int_\Omega |e^m(t)|^2 + |\cc^{\ m}|^2 \dx +  \int_{\Omega_T}\eps |\na \cc^{\ m}|^{2}
+ T_{\delta}(e^m)\mathfrak{T}_{\delta}(\vec{c}^{\ m})\frac{\kappa|\na\theta^{m}|^{2}}{|\theta^{m}|^{2}}\dx\dtau\\
\nonumber
& + \int_{\Omega_T}T_{\delta}(e^m)\mathfrak{T}_{\delta}(\vec{c}^{\ m} )
\left| \na\vzeta^{\ m} + \frac{\vec{z}}{\theta^{m}}\na\varphi^{m} \right|\dx\dtau \leq K.
\end{align}
Returning back to \eqref{tmp.ce} we also have
\begin{equation} \label{up22}
\int_{\Omega_T} |\na e^m|^2 \dx \dt \leq K.
\end{equation}

Next, as above, we may show the following estimates of the time derivative
\begin{equation} \label{c1}
\int_0^T \|\pa_t \cc^{\ m} \|_{-1,2}^2 + \|\pa_t e^m\|_{-1,2}^2 + \|\pa_t \vv^m\|_{2}^2 +
\|\nabla \varphi_t\|_{2}^2 \dt \leq C
\end{equation}
and consequently
\begin{equation} \label{c2}
\begin{aligned}
\vv^m \to \vv \qquad & \mbox{strongly in }C([0,T];\vc{W}^n), \\
\pa_t \vv^m \to \pa_t \vv \qquad & \mbox{weakly in } L^2(\Omega_T;\R^3), \\
\cc^{\ m} \to \cc \qquad & \mbox{strongly in } L^2(\Omega_T;\R^L), \\
\cc^{\ m} \to \cc \qquad & \mbox{weakly in } L^2(0,T; W^{1,2}(\Omega;\R^L)), \\
\pa_t \cc^{\ m} \to \pa_t \cc \qquad & \mbox{weakly in } L^2(0,T; W^{-1,2}(\Omega;\R^L), \\
e^m \to e  \qquad & \mbox{strongly in } L^2(\Omega_T), \\
e^m \to e \qquad & \mbox{weakly in } L^2(0,T;W^{1,2}(\Omega)), \\
\pa_t e^m \to \pa_t e \qquad & \mbox{weakly in } L^2(0,T;W^{-1,2}(\Omega)), \\
\varphi^m \to \varphi \qquad & \mbox{strongly in } L^q(0,T;W^{1,q}(\Omega)), 1\leq q <\infty, \\
\varphi^m \to \varphi \qquad & \mbox{weakly in } L^q(0,T;W^{2,q}(\Omega)), 1\leq q<\infty, \\
\mathfrak{q}_{\cc}^m \to \mathfrak{q}_{\cc} \qquad & \mbox{weakly in } L^2(0,T;L^2(\Omega;\R^{3})), \\
\vc{q}_e^m \to \vc{q}_e \qquad & \mbox{weakly in } L^2(0,T;L^2(\Omega;\R^{3L})), \\
\vec{q}_{\cc\ \Gamma}^{\ m} \to \vec{q}_{\cc\ \Gamma} \qquad & \mbox{weakly in } L^2(0,T;L^2(\pa \Omega;\R^{L})), \\
{q}_{e\Gamma}^{m} \to {q}_{e\Gamma} \qquad & \mbox{weakly in } L^2(0,T;L^2(\pa \Omega)), \\
{q}_{\varphi\Gamma}^{m} \to {q}_{\varphi\Gamma} \qquad & \mbox{weakly}*\mbox{ in } L^2(0,T;L^\infty(\pa \Omega)), \\
\vzeta^{\, m} \to \vzeta \qquad & \mbox{weakly in } L^2(0,T;W^{1,2}(\Omega;\R^L)), \\
\mathcal{S}^m \to \mathcal{S} \qquad & \mbox{weakly in } L^{r'}(\Omega_T;\R^{9}).
\end{aligned}
\end{equation}
The limit functions fulfill
\begin{equation} \label{c3}
\begin{aligned}
& \langle\pa_t \cc,\vec{\psi}\rangle_{W^{-1,2}(\Omega);W^{1,2}(\Omega)}  + \int_{\Omega} \varepsilon \nabla \cc : \nabla \vec{\psi} - \big(\cc\otimes \vv + {\mathfrak q}_{\cc}\big) : \nabla \vec{\psi} \dx \\
&= -  \int_{\partial \Omega} \vec{q}_{\cc\,\Gamma} \cdot \vec{\psi} \dS + \int_{\Omega} \vec{r} \cdot \vec{\psi} \dx
\end{aligned}
\end{equation}
a.e. in $(0,T)$ and for all $\vec{\psi} \in W^{1,2}(\Omega;\R^L)$, $\lim_{t\to 0^+} \cc(t) = \cc_{0}^{\ \delta}$ in $L^2(\Omega;\R^L)$,
\begin{equation} \label{c4}
\begin{aligned}
& \int_\Omega \pa_t \vv\cdot \vc{u} + {\mathcal S} : \mathcal{D}(\vc{u}) - \xi^k(|\vv|^2)(\vc{v}\otimes \vv) : \nabla \vc{u}  \dx \\
&= -  \int_{\partial \Omega} \gamma\vv\cdot \vc{u} \dS - \int_{\Omega} Q\nabla \varphi \cdot \vc{u} \dx
\end{aligned}
\end{equation}
for all $\vc{u} \in \vc{W}^n$,
\begin{equation} \label{c5}
\begin{aligned}
& \langle\pa_t e,\psi\rangle_{W^{-1,2}(\Omega);W^{1,2}(\Omega)} + \int_{\Omega} \varepsilon \nabla e\cdot \nabla \psi - \big(e\vv + \vc{q}_e\big) \cdot \nabla \psi  \dx  \\
&= - \int_{\partial \Omega} q_{e\Gamma} \psi  \dS  + \int_0^T\int_{\Omega} \big(\mathcal{S}: \nabla \vv - \vec{z} \cdot (\mathfrak{q}_{\cc} \nabla \varphi)\big) \psi \dx
\end{aligned}
\end{equation}
a.e. in $(0,T)$ and for all $\psi \in W^{1,2}(\Omega)$, $\lim_{t\to 0^+} e(t) = e_{0}^{\eps,\delta}$ in $L^2(\Omega)$. For $\vv$, the initial condition is fulfilled in the same sense as in the previous step. Furthermore
\begin{equation} \label{c6}
\int_\Omega \nabla \varphi\cdot\nabla \psi - Q\psi\dx = \int_{\partial \Omega} q_{\varphi \Gamma} \psi \dS
\end{equation}
for all $\psi \in W^{1,2}(\Omega)$. As in the previous step, it is not difficult to verify that
\begin{equation} \label{c7}
{\mathfrak q}_{\cc} = {\mathfrak q}_{\cc}^{*,\delta} (e,\cc,\theta, \nabla \vzeta,\nabla \theta,\nabla \varphi),
\end{equation}
\begin{equation} \label{c8}
{\vc{q}}_{e} = \vc{q}_{e}^{*,\delta} (e, \cc,\theta,\nabla \vzeta,\nabla \theta, \nabla \varphi),
\end{equation}
\begin{equation} \label{c9}
{\mathcal S} = {\mathcal S}^* (\cc, \theta,{\mathcal D}(\vv)),
\end{equation}
\begin{equation} \label{c10}
\vec{r} = \vec{r}^{\ *,\delta} (e,\theta,\cc,\vzeta),
\end{equation}
\begin{equation} \label{c11}
Q = Q^{*,\delta} (\cc),
\end{equation}
the boundary fluxes
\begin{equation} \label{c12}
\begin{array}{l}
\displaystyle \vec{q}_{\cc\,\Gamma} = \vec{q}^{\ *,\delta}_{\cc\,\Gamma}(x,e,\cc,\theta,\vzeta,\varphi), \\
\displaystyle q_{e\Gamma}= q^{*,\delta}_{e\Gamma}(x,e,\cc,\theta,\vzeta),  \\
\displaystyle q_{\varphi\Gamma}= q^{*,\delta}_{\varphi\Gamma}(x,\varphi), \\
\displaystyle \gamma = \gamma^{*,\delta} (e,\cc,\theta),
\end{array}
\end{equation}
 and, in addition,
\begin{equation} \label{c13}
\theta =\theta^{*,\delta}(e),
\end{equation}
\begin{equation} \label{c14}
\vzeta= \vzeta^{\ *,\varepsilon,\delta}(\cc).
\end{equation}
Since the proof of the attainment of the initial conditions is rather standard we omit the details here.

\subsection{Limit $\delta\to 0$}
First, recall that exactly as in the previous section, we keep the minimum principle for the internal energy (and for the temperature). Hence, $e^\delta \geq \delta$ and $\theta^\delta=\theta^{*,\delta}(e^\delta)\geq C\delta$. Similarly, we can now establish the minimum principle for the concentrations.

To this aim, we use in \eqref{c3} as test function $\psi_i = \delta_{ij} \min\{0,c_j-\delta\}$ for $i=1,2,\dots,L$; recalling the cut-off in the fluxes for $c_i\leq \delta$ and the fact that $\Div \vv^\delta=0$, we deduce
$$
c_j \geq \delta, \qquad j=1,2,\dots, L.
$$
Furthermore, fixing an arbitrary function $u \in W^{1,2}(\Omega)$ and using as test function in \eqref{c3} $\vec{\psi} = u\vec{\ell}$, we get
\begin{equation} \label{d0}
\langle \pa_t (\cc^{\ \delta}\cdot \vec{\ell}),u\rangle_{W^{-1,2}(\Omega),W^{1,2}(\Omega)} + \eps \int_{\Omega} \na (\cc^{\ \delta}\cdot \vec{\ell})\cdot \na u - (\cc^{\ \delta}\cdot \vec{\ell}) \vv^\delta\cdot\na u \dx = 0
\end{equation}
a.e. in $(0,T)$. As $u\in W^{1,2}(\Omega)$ is arbitrary and $\cc_0^{\ \delta}\cdot \vec{\ell} \equiv 1$, we get
$$
\cc^{\ \delta}\cdot \vec{\ell} = 1
$$
a.e. in $\Omega_T$. Therefore $\cc^{\ \delta} \in G$ a.e. in $\Omega_T$ uniformly with respect to $\delta$, which means that for $\delta$ sufficiently small the "cut-off" $\mathfrak{T}_\delta (\cc) \equiv 1$ a.e. in $\Omega_T$. Moreover, we have
$$
\sup_{t \in (0,T)} \|\cc^{\ \delta}\|_{L^\infty(\Omega;\R^L)} \leq 1
$$
which implies that the definitions of $\vzeta^{\ \delta}$ and $\vzeta^{\ \eps,\delta}$ coincide.
We also keep in mind that estimates \eqref{b5}--\eqref{b6} (with $l$ replaced by $\delta$) still remain valid. Note, however, that due to the $L^\infty$-bound of $\cc^{\ \delta}$ the constants on the right-hand side of these estimates are actually independent of any parameter.

Due to the fact that we have already passed to the limit in the Galerkin approximations for the internal energy balance and the balance of species, we can now deduce an analogue of the entropy inequality (on the approximate level). Next, we also deduce the total energy balance and the sum of these two identities will allow us to obtain the required estimates independent of $\delta$. Moreover, we will be able to pass to the limit in these identities (getting, however, rather inequalities than equalities) and use them in order to get estimates in the following limit passages.

First, let us use as test function in \eqref{c5} $\psi: = \frac{1}{\theta^\delta} = \frac{1}{\theta^{*,\delta}(e^\delta)}$. Recall that this function belongs to $W^{1,2}(\Omega) \cap L^\infty(\Omega)$ and therefore it is an appropriate test function. We have
\begin{equation} \label{d1}
\begin{aligned}
&\frac{{\rm d}}{{\rm d}t} \int_\Omega s^{*,\delta}_e (e^\delta)\dx  + \int_\Omega \eps \pa^2_{e^2} s^{*,\eps}_e(e^\delta) |\na e^\delta|^2 - (e^\delta\vv^\delta + \vc{q}^\delta_e)\cdot \na \frac{1}{\theta^\delta} \dx \\
&=- \int _{\pa \Omega} q_{e\Gamma}^\delta\frac{1}{\theta^\delta} \dS + \int_\Omega \frac{\mathcal{S}^\delta:\mathcal{D}(\vv^\delta)}{\theta^\delta} - \frac{\vec{z}}{\theta^\delta}\cdot (\mathfrak{q}^{\ \delta}_{\cc} \na \varphi^\delta) \dx.
\end{aligned}
\end{equation}
Next, we use as test function in \eqref{c3} the function $\vec{\psi}:= -\vzeta^\delta = \pa_{\cc}s_{\cc}^{*,\delta}(\cc^{\ \delta})$. It yields
\begin{equation} \label{d2}
\begin{aligned}
&\frac{{\rm d}}{{\rm d}t} \int_\Omega s^{*,\delta}_{\cc}(\cc^{\ \delta})\dx + \int_\Omega \eps \nabla \cc^{\ \delta}: \na \vzeta^{\ \delta} + (\cc^{\ \delta} \otimes \vv^{\delta}+ \mathfrak{q}^{\ \delta}_{\cc}): \na \vzeta^{\ \delta} \dx \\
&= \int_{\pa \Omega} \vec{q}^{\ \delta}_{\cc\ \Gamma} \cdot \vzeta^{\ \delta} \dS - \int_\Omega \vec{r}^{\ \delta} \cdot  \vzeta^{\ \delta} \dx.
\end{aligned}
\end{equation}
Summing up \eqref{d1} and \eqref{d2} and using the specific form of the fluxes we end up with
\begin{equation}\label{d3}
\begin{aligned}
&\frac{{\rm d}}{{\rm d}t} \int_\Omega s^{*,\delta}_e (e^\delta) +s^{*,\delta}_{\cc}(\cc^{\ \delta})\dx + \eps \int_{\Omega} \pa^2_{e^2} s^{*,\eps}_e(e^\delta) |\na e^\delta|^2 - \nabla \cc^{\ \delta}: \na \vzeta^{\ \delta} \dx \\
& = \int_\Omega \frac{\mathcal{S}^\delta:\mathcal{D}(\vv^\delta)}{\theta^\delta} - \vec{r}^{\ \delta} \cdot \vzeta^{\ \delta} + T_\delta(e^\delta) \frac{\kappa |\na \theta^\delta|^2}{(\theta^\delta)^2} \dx \\
& + \int_\Omega T_\delta(e^\delta) \mathfrak{M}\Big(\na \vzeta^{\ \delta} + \frac{\vec{z}}{\theta^\delta} \na \varphi^\delta\Big)\Big(\na \vzeta^{\ \delta} + \frac{\vec{z}}{\theta^\delta} \na \varphi^\delta\Big) \dx \\
& + \int_{\pa \Omega} T_\delta(e^\delta) \kappa_\Gamma \Big(\frac {1} {\theta^\delta} - \frac {1} {\theta^\Gamma}\Big)\frac {1} {\theta^{\delta}} + T_\delta(e^\delta) \mathfrak{D} \Big(\vzeta^{\ \delta} -\vzeta^{\ \Gamma} + \frac{\vec{z}}{\theta^\Gamma} (\varphi^\delta - \varphi^\Gamma)\Big)\vzeta^\delta \dS.
\end{aligned}
\end{equation}
Next we deal with the total energy balance. We test the equation for the internal energy \eqref{c5} by $\chi_{[0,t]\times \Omega}$ (indeed, after suitable regularization in time), the momentum equation \eqref{c4} by $\vv^\delta$ and the "balance of the total charge"  by $\varphi$. The internal energy balance reads
\begin{equation} \label{d4}
\begin{aligned}
\frac{{\rm d}}{{\rm d}t} \|e^\delta\|_1 + \int_{\pa \Omega} q_{e\Gamma}^\delta \dS = \int_\Omega \mathcal{S}^\delta :\mathcal{D}(\vv^\delta) - \vec{z}\cdot (\mathfrak{q}^{\delta}_{\cc} \nabla \varphi^\delta) \dx,
\end{aligned}
\end{equation}
the kinetic energy balance is
\begin{equation} \label{d5}
\begin{aligned}
\frac 12 \frac{{\rm d}}{{\rm d}t} \|\vv^\delta\|_2^2 + \int_\Omega \mathcal{S}^\delta :\mathcal{D}(\vv^\delta)\dx  + \int_{\pa \Omega} \gamma|\vv^\delta|^2 \dS = -\int_\Omega Q^\delta \nabla \varphi^\delta \cdot \vv^\delta \dx.
\end{aligned}
\end{equation}
The total charge balance with the test function $\varphi^\delta$ has the form
\begin{equation} \label{d6}
\begin{aligned}
&\langle \pa_t Q^\delta,\varphi\rangle_{W^{-1,2}(\Omega);W^{1,2}(\Omega)}+ \int_\Omega \eps \na Q^\delta \cdot \na \varphi - Q^\delta \na \varphi^\delta \cdot \vv^\delta - \vec{z} \cdot (\mathfrak{q}^{\delta}_{\cc} \na \varphi^\delta) \dx  \\
= & \int_{\pa \Omega} \vec{q}^{\ \delta}_{\cc\ \Gamma} \cdot \vec{z} \varphi \dS + \int_\Omega \vec{r}^{\ \delta} \cdot \vec{z} \varphi \dx.
\end{aligned}
\end{equation}
Using \eqref{c6}, Hypothesis (H1) and the form of $Q^\delta=\vec{z} \cdot \vec{c}^{\ \delta}$, we get after summing \eqref{d4}--\eqref{d6}
\begin{equation} \label{d7}
\begin{aligned}
& \frac{{\rm d}}{{\rm d}t} \Big(\|e^\delta\|_1 + \frac 12 \|\vv^\delta\|_2^2 + \frac 12 \|\na \varphi^\delta\|_2^2 + \frac 12 \int_{\pa \Omega} \lambda^\Gamma |\varphi^\delta|^2 \dS \Big)  + \int_{\pa \Omega} \gamma|\vv^\delta|^2 \dS + \int_\Omega \eps |Q^\delta|^2 \dx = \\
&  \int_{\pa \Omega} T_\delta(e^\delta) \kappa^\Gamma \left( \frac{1}{\theta^{\delta}} - \frac{1}{\theta^{\Gamma}} \right) + \eps Q^\delta \lambda^\Gamma (\varphi^\delta-\varphi^\Gamma) + T_\delta(e^\delta)\mathfrak{D} \Big(\vzeta^{\ \delta} -\vzeta^{\ \Gamma} + \frac{\vec{z}}{\theta^\Gamma}(\varphi^\delta -\varphi^\Gamma)\vec{z} \varphi^\delta \dS.
\end{aligned}
\end{equation}
Subtracting \eqref{d3} from \eqref{d7} we end up with
\begin{equation} \label{d8}
\begin{aligned}
& \frac{{\rm d}}{{\rm d}t} \Big(\|e^\delta\|_1 + \frac 12 \|\vv^\delta\|_2^2 + \frac 12 \|\na \varphi^\delta\|_2^2 -\int_\Omega s^{*,\delta}_e (e^\delta) +s^{*,\delta}_{\cc}(\cc^{\ \delta})\dx  + \frac 12 \int_{\pa \Omega} \lambda^\Gamma |\varphi^\delta|^2 \dS \Big) \\
& +\int_\Omega \eps |Q^\delta|^2 -  \eps\pa^2_{e^2} s^{*,\eps}_e(e^\delta) |\na e^\delta|^2 + \eps\nabla \cc^{\ \delta}: \na \vzeta^{\ \delta} +\frac{\mathcal{S}^\delta:\mathcal{D}(\vv^\delta)}{\theta^\delta}  + T_\delta(e^\delta) \frac{\kappa |\na \theta^\delta|^2}{(\theta^\delta)^2}   \dx \\
& + \int_\Omega T_\delta(e^\delta) \mathfrak{M}\Big(\na \vzeta^{\ \delta} + \frac{\vec{z}}{\theta^\delta} \na \varphi^\delta\Big)\Big(\na \vzeta^{\ \delta} + \frac{\vec{z}}{\theta^\delta} \na \varphi^\delta\Big) \dx \\
& + \int_{\pa \Omega} \gamma|\vv^\delta|^2  + T_\delta(e^\delta) \kappa_\Gamma \Big(\frac {1} {\theta^\delta} - \frac {1} {\theta^\Gamma}\Big)^2 \dS  \\
& + \int_{\pa \Omega} T_\delta(e^\delta) \mathfrak{D} \Big(\vzeta^{\ \delta} -\vzeta^{\ \Gamma} + \frac{\vec{z}}{\theta^\Gamma} (\varphi^\delta - \varphi^\Gamma)\Big) \cdot \Big(\vzeta^{\ \delta} -\vzeta^{\ \Gamma} + \frac{\vec{z}}{\theta^\Gamma} (\varphi^\delta - \varphi^\Gamma)\Big) \dS \\
& = \int_{\Omega} \vec{r}^{\ \delta} \cdot \vzeta^{\ \delta} \dx -\int_{\pa \Omega} T_\delta(e^\delta) \kappa^\Gamma \left( \frac{1}{\theta^{\delta}} - \frac{1}{\theta^{\Gamma}} \right)\left(\frac{1}{\theta^{\Gamma}}-1\right) + \eps Q^\delta \lambda^\Gamma (\varphi^\delta-\varphi^\Gamma) \dS \\
& + \int_{\pa \Omega} T_\delta(e^\delta)\mathfrak{D} \Big(\vzeta^{\ \delta} -\vzeta^{\ \Gamma} + \frac{\vec{z}}{\theta^\Gamma}(\varphi^\delta -\varphi^\Gamma)\Big)\cdot \Big(\vzeta^{\ \Gamma} +\frac{\vec{z}}{\theta^\Gamma} \varphi^\delta\Big) \dS \\
& + \int_{\pa \Omega} T_\delta(e^\delta)\mathfrak{D} \Big(\vzeta^{\ \delta} -\vzeta^{\ \Gamma} + \frac{\vec{z}}{\theta^\Gamma}(\varphi^\delta -\varphi^\Gamma)\Big)\cdot \frac{\vec{z}} \varphi^\delta\Big(1 -\frac{1}{\theta^\Gamma} \Big) \dS.
\end{aligned}
\end{equation}
Using our Hypotheses we read from \eqref{d8} the following estimates uniform in $\delta$ (but also in $n$, $\eps$ and $k$)
\begin{equation} \label{d9}
\begin{aligned}
& \|e^\delta\|_{L^\infty(0,T;L^1(\Omega))} + \|\vv^\delta\|_{L^\infty(0,T;L^2(\Omega;\R^3))} + \|\varphi^\delta\|_{L^\infty(0,T;W^{1,2}(\Omega))} + \eps \Big\|\frac{\na e^\delta}{e^\delta}\Big\|_{L^2(\Omega_T;\R^3)}  \\
& + \eps \|\nabla \cc^{\ \delta}\|_{L^2(\Omega_T;\R^{3L})} + \int_{\Omega_T} T_\delta(e_\delta) M(\theta) \Big|\mathfrak{P}_{\vec{\ell}}\Big(\na \vzeta^{\ \delta} + \frac{\vec{z}}{\theta^\delta} \na \varphi^\delta\Big)\Big|^2 + T_\delta(e^\delta) \frac{\kappa|\na \theta^\delta|^2}{(\theta^\delta)^2} \dx \dt
\\
& +\int_\Gamma  T_\delta(e^\delta) \Big|\mathfrak{P}_{\vec{\ell}} \Big(\vzeta^{\ \delta} -\vzeta^{\ \Gamma} + \frac{\vec{z}}{\theta^\Gamma}(\varphi^\delta -\varphi^\Gamma) \Big)\Big|^2 + \frac{\overline{\kappa}}{(\theta^\delta)^2} \dS  \dt \leq C.
\end{aligned}
\end{equation}
Furthermore, we may use $e^\delta$ as test function in \eqref{c5} the function  and get
\begin{equation} \label{d10}
\|e^\delta\|_{L^\infty(0,T;L^2(\Omega))} + \eps \|\na e^\delta\|_{L^2(0,T;L^2(\Omega))} \leq C.
\end{equation}
Finally, for the time derivatives we have
\begin{equation} \label{d11}
\begin{aligned}
&\|\pa_t e^\delta\|_{L^2(0,T;W^{-1,2}(\Omega))} + \|\pa_t \vv^\delta \|_{L^2(Q_T;\R^3)}  + \|\pa_t \cc^{\ \delta} \|_{(L^2(0,T;W^{1,2}(\Omega;\R^L)) \cap L^{a'}(0,T;W^{1,a'}(\Omega;\R^L)))'} \\
& + \|\na \varphi^\delta\|_{(L^2(0,T;W^{1,2}(\Omega;\R^L)) \cap L^{a'}(0,T;W^{1,a'}(\Omega;\R^L)))'} \leq C,
\end{aligned}
\end{equation}
where $a = \min\{\beta, \frac{2\beta}{2\beta-\eps_0}\}$, see \eqref{aaa}, and $a' = \frac{a}{a-1}$. Note that the reason for worse estimate of the time derivative of $\cc$ (and, consequently, of $\na \varphi$) is connected with the integrability of $\mathfrak{q}_{\cc}^\delta$. More precisely, the most restrictive term is $m \na \frac{1}{\theta^\delta}$.
We may now repeat the computations from Subsections 3.4 and 3.5 to get the same estimates on the  integrability of the fluxes now denoted by $\delta$. We have shown that for $\delta \to 0^+$ we have (possibly for subsequences)
\begin{equation} \label{d12}
\begin{aligned}
\vv^\delta \to \vv \qquad & \mbox{strongly in }C([0,T];\vc{W}^n), \\
\pa_t \vv^\delta \to \pa_t \vv \qquad & \mbox{weakly in } L^2(\Omega_T;\R^3), \\
\cc^{\ \delta} \to \cc \qquad & \mbox{strongly in } L^q(\Omega_T;\R^L), 1\leq q<\infty\\
\cc^{\ \delta} \to \cc \qquad & \mbox{weakly in } L^2(0,T; W^{1,2}(\Omega;\R^L)), \\
\pa_t \cc^{\ \delta} \to \pa_t \cc \qquad & \mbox{weakly in } (L^2(0,T;W^{1,2}(\Omega;\R^L)) \cap L^{a'}(0,T;W^{1,a'}(\Omega;\R^L)))', \\
e^\delta \to e  \qquad & \mbox{strongly in } L^2(\Omega_T), \\
e^\delta \to e \qquad & \mbox{weakly in } L^2(0,T;W^{1,2}(\Omega)), \\
\pa_t e^\delta \to \pa_t e \qquad & \mbox{weakly in } L^2(0,T;W^{-1,2}(\Omega)), \\
\varphi^\delta \to \varphi \qquad & \mbox{strongly in } L^q(0,T;W^{1,q}(\Omega)), 1\leq q <\infty, \\
\varphi^\delta \to \varphi \qquad & \mbox{weakly in } L^q(0,T;W^{2,q}(\Omega)), 1\leq q<\infty, \\
\mathfrak{q}_{\cc}^{\ \delta} \to  \mathfrak{q}_{\cc} \qquad & \mbox{weakly in } L^q(Q_T; \R^{3L}), q \mbox{ from \ (\ref{qq1})}, \\
\vc{q}_e^\delta \to \vc{q}_e \qquad & \mbox{weakly in } L^a(Q_T; \R^{3}), a \mbox{ from (\ref{aaa})}, \\
\vec{q}_{\cc \ \Gamma}^{\ \delta} \to \vec{q}_{\cc \ \Gamma} \qquad & \mbox{weakly in } L^2(\Gamma; \R^{L}), \\
q_{e\Gamma}^\delta \to q_{e\Gamma} \qquad & \mbox{weakly in } L^2(\Gamma), \\
q_{\varphi\Gamma}^\delta \to q_{\varphi\Gamma} \qquad & \mbox{weakly in } L^2(0,T; L^\infty(\pa\Omega), \\
\vzeta^{\ \delta} \to \vzeta \qquad & \mbox{weakly in } L^a(0,T;W^{1,a}(\Omega;\R^L)), \\
\mathcal{S}^\delta \to \mathcal{S} \qquad & \mbox{weakly in } L^{r'}(\Omega_T),
\end{aligned}
\end{equation}
and the limit functions fulfill
\begin{equation} \label{d13}
\begin{aligned}
& \langle\pa_t \cc,\vec{\psi}\rangle_{W^{-1,a}(\Omega);W^{1,a'}(\Omega)}  + \int_{\Omega} \varepsilon \nabla \cc : \nabla \vec{\psi} - \big(\cc\otimes \vv + {\mathfrak q}_{\cc}\big) : \nabla \vec{\psi} \dx \\
&= -  \int_{\partial \Omega} \vec{q}_{\cc\,\Gamma} \cdot \vec{\psi} \dS + \int_{\Omega} \vec{r} \cdot \vec{\psi} \dx
\end{aligned}
\end{equation}
a.e. in $(0,T)$ and for all $\vec{\psi} \in W^{1,a'}(\Omega;\R^L)$, $\lim_{t\to 0^+} \cc\,(t) = \cc^{\ 0}$ in $L^2(\Omega;\R^L)$,
\begin{equation} \label{d14}
\begin{aligned}
& \int_\Omega \pa_t \vv\cdot \vc{u} + {\mathcal S} : \mathcal{D}(\vc{u}) - \xi^k(|\vv|^2)(\vc{v}\otimes \vv) : \nabla \vc{u}  \dx \\
&= -  \int_{\partial \Omega} \gamma\vv\cdot \vc{u} \dS - \int_{\Omega} Q\nabla \varphi \cdot \vc{u} \dx
\end{aligned}
\end{equation}
 for all $\vc{u} \in \vc{W}^n$, the initial condition for $\vv$ is fulfilled in the same sense as above,
\begin{equation} \label{d15}
\begin{aligned}
& \langle\pa_t e,\psi\rangle_{W^{-1,2}(\Omega);W^{1,2}(\Omega)} + \int_{\Omega} \varepsilon \nabla e\cdot \nabla \psi - \big(e\vv + \vc{q}_e\big) \cdot \nabla \psi  \dx  \\
&= - \int_{\partial \Omega} q_{e\Gamma} \psi  \dS  + \int_0^T\int_{\Omega} \big(\mathcal{S}: \nabla \vv - \vec{z} \cdot (\mathfrak{q}_{\cc} \nabla \varphi)\big) \psi \dx
\end{aligned}
\end{equation}
a.e. in $(0,T)$ and for all $\psi \in W^{1,2}(\Omega)$, $\lim_{t\to 0^+} e(t) = e^{0}$ in $L^2(\Omega)$,
\begin{equation} \label{d16}
\int_\Omega \nabla \varphi\cdot\nabla \psi - Q\psi\dx = \int_{\partial \Omega} q_{\varphi \Gamma} \psi \dS
\end{equation}
for all $\psi \in W^{1,2}(\Omega)$, where
\begin{equation} \label{d17}
{\mathfrak q}_{\cc} = {\mathfrak q}_{\cc}^{*} (e, \cc,\theta,\nabla \vzeta,\nabla \theta,\nabla \varphi),
\end{equation}
\begin{equation} \label{d18}
{\vc{q}}_{e} = \vc{q}_{e}^{*} (e,\cc,\theta,\nabla \vzeta,\nabla \theta, \nabla \varphi),
\end{equation}
\begin{equation} \label{d19}
{\mathcal S} = {\mathcal S}^* (\cc,\theta,{\mathcal D}(\vv)),
\end{equation}
\begin{equation} \label{d20}
\vec{r} = \vec{r}^{\ *} (e,\cc,\theta,\vzeta),
\end{equation}
\begin{equation} \label{d21}
Q = Q^{*} (\cc),
\end{equation}
the boundary fluxes
\begin{equation} \label{d22}
\begin{array}{l}
\displaystyle \vec{q}_{\cc\,\Gamma} = \vec{q}^{\ *}_{\cc\,\Gamma}(x,e,\cc,\theta,\vzeta,\varphi), \\
\displaystyle q_{e\Gamma}= q^{*}_{e\Gamma}(x,e,\cc,\theta,\vzeta),  \\
\displaystyle q_{\varphi\Gamma}= q^{*}_{\varphi\Gamma}(x,\varphi), \\
\displaystyle \gamma = \gamma^{*} (e,\cc,\theta).
\end{array}
\end{equation}
Furthermore,
\begin{equation} \label{d22a}
\theta = \theta^*(e),
\end{equation}
\begin{equation} \label{d22b}
\vzeta = \vzeta^{\ *}(\cc).
\end{equation}

Note, however, that statements \eqref{d13}--\eqref{d22} require more careful proof than in the previous limit passages, where the situation was much simpler. On the other hand, we may follow directly the approach from \cite{BuHa15}. The main problem is connected with possible degeneracy of the temperature and concentrations and we have to control the size of the subset of $\Omega_T$, where some of these quantities are less than say $\delta_0$. Due to the estimates above, the size of this singular set can be controlled by $C/\ln \delta_0$ and this allows to show the limit passages in the fluxes.

Note also that we may pass to the limit in the entropy identity to get the entropy inequality

\begin{equation}\label{d23}
\begin{aligned}
&\int_\Omega (s^{*}_e (e) +s^{*}_{\cc}(\cc))(t)\dx  \geq  \int_\Omega (s^{*}_e (e) +s^{*}_{\cc}(\cc))(0)\dx \\
&-\eps \int_{\Omega_T} \pa^2_{e^2} s^{*,\eps}_e(e) |\na e|^2 - \nabla \cc: \na \vzeta \dx\dt \\
& + \int_{\Omega_T} \frac{\mathcal{S}:\mathcal{D}(\vv)}{\theta} - \vec{r} \cdot \vzeta + \frac{\kappa |\na \theta|^2}{(\theta)^2}  + \mathfrak{M}\Big(\na \vzeta + \frac{\vec{z}}{\theta} \na \varphi\Big)\Big(\na \vzeta + \frac{\vec{z}}{\theta} \na \varphi\Big) \dx \dt\\
& + \int_{\Gamma}  \kappa_\Gamma \Big(\frac {1} {\theta} - \frac {1} {\theta^\Gamma}\Big)\frac {1} {\theta} + \mathfrak{D} \Big(\vzeta -\vzeta^{\ \Gamma} + \frac{\vec{z}}{\theta^\Gamma} (\varphi - \varphi^\Gamma)\Big)\vzeta \dS \dt.
\end{aligned}
\end{equation}

\subsection{Limit passage $n \to \infty$ and $\eps \to 0^+$} We take e.g. $\eps_n = \frac{1}{n}$ and consider the limit passage $n \to \infty$. Let us first collect estimates independent of $n$. Recalling that
\begin{equation} \label{e1}
\|\cc^{\ n}\|_{L^\infty(\Omega_T;\R^L)} \leq 1
\end{equation}
we can read from \eqref{d16}
\begin{equation} \label{e2}
\|\varphi^n\|_{L^\infty (0,T;W^{2,q}(\Omega))} \leq C(q)
\end{equation}
for any $1\leq q<\infty$ as well from \eqref{d14} with test function $\vv^n$
\begin{equation} \label{e3}
\|\vv^n\|_{L^\infty(0,T;L^2(\Omega;\R^3))} + \|\na \vv^n \|_{L^r(\Omega_T;\R^9)} + \|\mathcal{S}^n \|_{L^{r'}(\Omega_T;\R^9)}  \leq C.
\end{equation}
Next, we read from \eqref{d9}
\begin{equation} \label{e4}
\begin{aligned}
& \sup_{t\in (0,T)} \|e^n(t)\|_{L^1(\Omega_T)}+\int_{Q_T} M(\theta) \Big|\mathfrak{P}_{\vec{\ell}} \Big(\na \vzeta^{\ n} + \frac{\vec{z}}{\theta^n} \na \varphi^n\Big)\Big|^2  + \frac{\kappa |\nabla \theta^n|^2}{|\theta^n|^2} \dx \dt \\
& + \int_\Gamma  \Big|\mathfrak{P}_{\vec{\ell}} \Big(\vzeta^{\ n} -\vzeta^{\ \Gamma} + \frac{\vec{z}}{\theta^\Gamma}(\varphi^n -\varphi^\Gamma) \Big)\Big|^2 + \frac{\overline{\kappa}}{|\theta^n|^2} \dS \dt
\leq C.
\end{aligned}
\end{equation}
Further, we may repeat computations between \eqref{eq.e1}--\eqref{ep.theta.2} to conclude that for any $\lambda >0$
\begin{equation} \label{e5}
\int_{Q_T} |\theta^n|^{\frac 53 -\lambda} + |\na \theta^n|^{\frac 54 -\lambda} + \frac{|\na \theta^n|^2}{(1+\na\theta^n)^{1+\lambda}} \dx \dt \leq C(\lambda)
\end{equation}
and
\begin{equation} \label{e6}
\|\mathfrak{q}_{\cc}^n \|_{L^{q_1}(\Omega_T;\R^{3L})} + \|\vc{q}_e^n\|_{L^{q_2}(\Omega_T;\R^3)} + \|\vzeta^{\ n}\|_{L^{q_3}(0,T;W^{1,q_3}(\Omega;\R^L))} + \|\cc^{\ n}\|_{L^{q_3}(0,T;W^{1,q_3}(\Omega;\R^L))} \leq C,
\end{equation}
where
$$
q_1 < \min\Big\{\frac{10}{10-3\eps_0}, \frac{2\beta}{2\beta-\eps_0}\Big\}, \quad q_2 = \min\Big\{\frac 54, \frac{2\beta}{2\beta-\eps_0}\Big\}, \quad q_3 = \min\Big\{\beta, \frac{2\beta}{2\beta-\eps_0}\Big\}.
$$
Next, we compute the estimates of the time derivatives. We can show
\begin{equation} \label{e7}
\begin{aligned}
&\|\pa_t \vv^n\|_{(L^{r}(0,T;W^{1,r}_{0,\nu}(\Omega) \cap W^{1,r}_{\Div}(\Omega)))'} + \|\pa_t \cc^{\ n}\|_{(L^2(0,T;W^{1,2}(\Omega)) \cap L^{a'}(0,T;W^{1,a'}(\Omega)))'} \\
&+ \|\pa_t e^n\|_{L^1(0,T; (W^{2,3}(\Omega))^*)} + \|\pa_t \nabla \varphi^n\|_{(L^2(0,T;W^{1,2}(\Omega)) \cap L^{a'}(0,T;W^{1,a'}(\Omega)))'}\leq C.
\end{aligned}
\end{equation}

Therefore, we have (with a possible choice of a subsequence)
\begin{equation} \label{e8}
\begin{aligned}
\vv^n \to \vv \qquad & \mbox{weakly in }L^r(0,T;W^{1,r}(\Omega;\R^3)), \\
\vv^n \to \vv \qquad & \mbox{weakly}^* \mbox{ in }L^\infty(0,T;L^2(\Omega;\R^3)), \\
\pa_t \vv^n \to \pa_t \vv \qquad & \mbox{weakly in } (L^{r}(0,T;W^{1,r}_{0,\nu}(\Omega) \cap W^{1,r}_{\Div}(\Omega)))', \\
\cc^{\ n} \to \cc \qquad & \mbox{strongly in } L^q(\Omega_T;\R^L), 1\leq q<\infty,\\
\cc^{\ n} \to \cc \qquad & \mbox{weakly in } L^{q_3}(0,T; W^{1,{q_3}}(\Omega;\R^L)), \\
\pa_t \cc^{\ n} \to \pa_t \cc \qquad & \mbox{weakly in } (L^2(0,T;W^{1,2}(\Omega;\R^L)) \\
& \qquad  \cap L^{a'}(0,T;W^{1,a'}(\Omega;\R^L)))', \\
e^n \to e  \qquad & \mbox{strongly in } L^q(\Omega_T),  q<\frac 53,\\
e^n \to e \qquad & \mbox{weakly in } L^q(0,T;W^{1,q}(\Omega)), q< \frac 54,\\
\pa_t e^n \to \pa_t e \qquad & \mbox{weakly in } \mathcal{M}(0,T;(W^{3,2}(\Omega))^*), \\
\varphi^n \to \varphi \qquad & \mbox{strongly in } L^q(0,T;W^{1,q}(\Omega)), 1\leq q <\infty, \\
\varphi^n \to \varphi \qquad & \mbox{weakly in } L^q(0,T;W^{2,q}(\Omega)), 1\leq q<\infty, \\
\mathfrak{q}_{\cc}^{\ n} \to  \mathfrak{q}_{\cc} \qquad & \mbox{weakly in } L^{q_1}(Q_T; \R^{3L}), \\
\vc{q}_e^n \to \vc{q}_e \qquad & \mbox{weakly in } L^{q_2}(Q_T; \R^{3}),  \\
\vec{q}_{\cc \ \Gamma}^{\ n} \to \vec{q}_{\cc \ \Gamma} \qquad & \mbox{weakly in } L^2(\Gamma; \R^{L}), \\
q_{e\Gamma}^n \to q_{e\Gamma} \qquad & \mbox{weakly in } L^2(\Gamma), \\
q_{\varphi\Gamma}^n \to q_{\varphi\Gamma} \qquad & \mbox{weakly in } L^2(0,T; L^\infty(\pa\Omega), \\
\vzeta^{\ n} \to \vzeta \qquad & \mbox{weakly in } L^a(0,T;W^{1,a}(\Omega;\R^L)), \\
\mathcal{S}^n \to \mathcal{S} \qquad & \mbox{weakly in } L^{r'}(\Omega_T).
\end{aligned}
\end{equation}
The limit functions satisfy
\begin{equation} \label{e9}
\begin{aligned}
& \langle\pa_t \cc,\vec{\psi}\rangle_{W^{-1,a}(\Omega);W^{1,a'}(\Omega)}  - \int_{\Omega}  \big(\cc\otimes \vv + {\mathfrak q}_{\cc}\big) : \nabla \vec{\psi} + \vec{r} \cdot \vec{\psi} \dx
= -  \int_{\partial \Omega} \vec{q}_{\cc\,\Gamma} \cdot \vec{\psi} \dS
\end{aligned}
\end{equation}
a.e. in $(0,T)$ and for all $\vec{\psi} \in W^{1,a'}(\Omega;\R^L)$; $\lim_{t\to 0^+} \cc(t) = \cc^{0}$ in $L^q(\Omega;\R^L)$, $1\leq q<\infty$ arbitrary,
\begin{equation} \label{e10}
\begin{aligned}
& \langle\pa_t \vv,\vc{u}\rangle_{W^{-1,{r'}}(\Omega);W^{1,r}(\Omega)}  + \int_\Omega   {\mathcal S} : \mathcal{D}(\vc{u}) - \xi^k(|\vv|^2)(\vc{v}\otimes \vv) : \nabla \vc{u}  \dx \\
&= -  \int_{\partial \Omega} \gamma\vv\cdot \vc{u} \dS - \int_{\Omega} Q\nabla \varphi \cdot \vc{u} \dx
\end{aligned}
\end{equation}
a.e. in $(0,T)$, for all $\vc{u} \in W^{1,r}_{0,\nu}(\Omega) \cap W^{1,r}_{\Div}(\Omega)$, $\lim_{t\to 0^+} \vv(t) = \vv^{0}$ in $L^2(\Omega;\R^3)$,
\begin{equation} \label{e11}
\begin{aligned}
& \langle\pa_t e,\psi\rangle_{W^{-1,2}(\Omega);W^{1,2}(\Omega)} - \int_{\Omega} \big(e\vv + \vc{q}_e\big) \cdot \nabla \psi -\big(\mathcal{S}: \mathcal{D}(\vv) - \vec{z} \cdot (\mathfrak{q}_{\cc} \nabla \varphi)\big) \psi \dx  \\
&= - \int_{\partial \Omega} q_{e\Gamma} \psi  \dS
\end{aligned}
\end{equation}
a.e. in $(0,T)$ and for all $\psi \in W^{1,2}(\Omega)$, $\lim_{t\to 0^+} e(t) = e^{0}$ in $L^1(\Omega)$. However, this must be shown more carefully. Next,
\begin{equation} \label{e12}
\int_\Omega \nabla \varphi\cdot\nabla \psi - Q\psi\dx = \int_{\partial \Omega} q_{\varphi \Gamma} \psi \dS.
\end{equation}
Further, the main difficulty is to show that
\begin{equation} \label{e13}
\mathcal{S} = \mathcal{S}^*(\cc,\theta,\mathcal{D}(\vv))
\end{equation}
as well as the fact that $\mathcal{S}^n :\mathcal{D}(\vv^n) \to \mathcal{S}^*(\cc,\theta,\mathcal{D}(\vv)): \mathcal{D}(\vv)$ weakly in $L^1(\Omega_T)$.
As there is still a cut-off in the  convective term, we can verify the limit passage using the Minty trick only. More precisely, due to estimates above we know that
\begin{equation} \label{e14}
\limsup_{n\to \infty} \int_{Q_T} \mathcal{S}^n: \mathcal{D}(\vv^n)\dx\dt \leq \int_{Q_T} \mathcal{S}: \mathcal{D}(\vv)\dx\dt
\end{equation}
as well as
\begin{equation} \label{e15}
\mathcal{S}^*(\cc^{\ n}, \theta^n,\mathcal{B}) \to \mathcal{S}^*(\cc, \theta,\mathcal{B})
\end{equation}
strongly in $L^{r'}(0,T;L^{r'}(\Omega;\R^9))$ for any  symmetric matrix-valued function $\mathcal{B}$ being in the space  $L^{r}(0,T;L^{r}(\Omega;\R^9))$. The monotonicity of the stress tensor implies
\begin{equation} \label{e16}
\int_{\Omega_T} \big(\mathcal{S}^n-\mathcal{S}^*(\cc^{\ n},\theta^n,\mathcal{B})\big):(\mathcal{D}(\vv^n)-\mathcal{B})\dx\dt \geq 0;
\end{equation}
whence
\begin{equation} \label{e17}
\int_{\Omega_T} \big(\mathcal{S}-\mathcal{S}^*(\cc,\theta,\mathcal{B})\big):(\mathcal{D}(\vv)-\mathcal{B})\dx\dt \geq 0
\end{equation}
for all $\mathcal{B} \in L^{r'}(0,T;L^{r'}(\Omega;\R^9))$, symmetric. The Minty trick gives us
\begin{equation} \label{e18}
\mathcal{S} = \mathcal{S}^*(\cc,\theta,\mathcal{D}(\vv))
\end{equation}
as well as
\begin{equation} \label{e19}
(\mathcal{S}^n -\mathcal{S}^*(\cc^{\ n},\theta^n,\mathcal{D}(\vv))):(\mathcal{D}(\vv^n)-\mathcal{D}(\vv)) \to 0
\end{equation} strongly in $L^1(Q_T)$. This implies
\begin{equation} \label{e20}
\mathcal{S}^n :\mathcal{D}(\vv^n) \to \mathcal{S}:\mathcal{D}(\vv)
\end{equation}
weakly in $L^1(Q_T)$. We may use this information to conclude that $\pa_t e^n \to \pa_t e$ weakly in $L^1(0,T;(W^{2,3}(\Omega))')$ and therefore $e \in C([0,T];(W^{2,3}(\Omega))')$. Moreover, using this information and the weak formulation of the internal energy balance we conclude
\begin{equation} \label{e21}
\int_{\Omega} (e(t)-e_0)\psi \dx \to 0
\end{equation}
for $t\to 0^+$ for all $\psi \in C(\overline{\Omega})$, i.e.
\begin{equation} \label{e22}
e(t) \to e_0
\end{equation}
weakly$^*$ in $\mathcal{M}(\Omega)$. Then, exactly as in \cite{BuHa15}, using the Biting lemma, we may conclude that
\begin{equation} \label{e23}
e(t)\to e_0 \quad \mbox{for } t\to 0^+ \mbox{ weakly in } L^1(\Omega).
\end{equation}

Next, we employ the assumption \eqref{hp.sc.2} to verify that
\begin{equation} \label{e24}
\vzeta = \vzeta^*(\cc)
\end{equation}
and
\begin{equation} \label{e25}
\vzeta^{\ n} \to \vzeta
\end{equation}
strongly in $L^1(\Omega_T;\R^L)$. Recalling the previous limit passage we can show that
$$
\mathfrak{T}_\delta(\cc^{\ n})\vzeta^{\ n} \to \mathfrak{T}_\delta(\cc) \vzeta
$$
strongly in $L^1(\Omega_T;\R^L)$ for any fix $\delta >0$. However, due to \eqref{hp.sc.2}, inequality \eqref{a6} and the control of $\vzeta^{\ n}$ in $L^{q_3}(\Omega_T;\R^L)$, we may control uniformly the smallness of the set, where $c^n_i$, $=1,2,\dots,L$, are small. This finishes the proof of \eqref{e24}.

To conclude this limit passage, let us note that we may pass to the limit also in the entropy inequality to get
\begin{equation}\label{e26}
\begin{aligned}
&\int_\Omega (s^{*}_e (e) +s^{*}_{\cc}(\cc))(t)\dx  \geq  \int_\Omega (s^{*}_e (e) +s^{*}_{\cc}(\cc))(0)\dx \\
& + \int_{\Omega_T} \frac{\mathcal{S}:\mathcal{D}(\vv)}{\theta} - \vec{r} \cdot \vzeta + \frac{\kappa |\na \theta|^2}{(\theta)^2}  + \mathfrak{M}\Big(\na \vzeta + \frac{\vec{z}}{\theta} \na \varphi\Big)\Big(\na \vzeta + \frac{\vec{z}}{\theta} \na \varphi\Big) \dx \dt\\
& + \int_{\Gamma}  \kappa_\Gamma \Big(\frac {1} {\theta} - \frac {1} {\theta^\Gamma}\Big)\frac {1} {\theta} + \mathfrak{D} \Big(\vzeta -\vzeta^{\ \Gamma} + \frac{\vec{z}}{\theta^\Gamma} (\varphi - \varphi^\Gamma)\Big)\vzeta \dS \dt.
\end{aligned}
\end{equation}

\subsection{Limit passage $k \to \infty$}

We may deduce almost the same set of estimates independent of $k$ as in the previous subsection. We have

\begin{equation} \label{f1}
\begin{aligned}
\vv^k \to \vv \qquad & \mbox{weakly in }L^r(0,T;W^{1,r}(\Omega;\R^3)), \\
\vv^k \to \vv \qquad & \mbox{weakly}^* \mbox{ in }L^\infty(0,T;L^2(\Omega;\R^3)), \\
\pa_t \vv^k \to \pa_t \vv \qquad & \mbox{weakly in } (L^{r}(0,T;W^{1,r}_{0,\nu}(\Omega) \cap W^{1,r}_{\Div}(\Omega)) \cap L^{\frac{5r}{5r-6}}(0,T;W^{1,\frac{5r}{5r-6}}_{0,\nu}(\Omega)))', \\
\cc^{\ k} \to \cc \qquad & \mbox{strongly in } L^q(\Omega_T;\R^L), 1\leq q<\infty,\\
\cc^{\ k} \to \cc \qquad & \mbox{weakly in } L^{q_3}(0,T; W^{1,{q_3}}(\Omega;\R^L)), \\
\pa_t \cc^{\ k} \to \pa_t \cc \qquad & \mbox{weakly in } (L^2(0,T;W^{1,2}(\Omega;\R^L)) \cap L^{a'}(0,T;W^{1,a'}(\Omega;\R^L)))', \\
e^k \to e  \qquad & \mbox{strongly in } L^q(\Omega_T),  q<\frac 53,\\
e^k \to e \qquad & \mbox{weakly in } L^q(0,T;W^{1,q}(\Omega)), q< \frac 54,\\
\pa_t e^k \to \pa_t e \qquad & \mbox{weakly in } \mathcal{M}(0,T;(W^{3,2}(\Omega))^*), \\
\varphi^k \to \varphi \qquad & \mbox{strongly in } L^q(0,T;W^{1,q}(\Omega)), 1\leq q <\infty, \\
\varphi^k \to \varphi \qquad & \mbox{weakly in } L^q(0,T;W^{2,q}(\Omega)), 1\leq q<\infty, \\
\mathfrak{q}_{\cc}^{\ k} \to  \mathfrak{q}_{\cc} \qquad & \mbox{weakly in } L^{q_1}(Q_T; \R^{3L}), \\
\vc{q}_e^k \to \vc{q}_e \qquad & \mbox{weakly in } L^{q_2}(Q_T; \R^{3}),  \\
\vec{q}_{\cc \ \Gamma}^{\ k} \to \vec{q}_{\cc \ \Gamma} \qquad & \mbox{weakly in } L^2(\Gamma; \R^{L}), \\
q_{e\Gamma}^k \to q_{e\Gamma} \qquad & \mbox{weakly in } L^2(\Gamma), \\
q_{\varphi\Gamma}^k \to q_{\varphi\Gamma} \qquad & \mbox{weakly in } L^2(0,T; L^\infty(\pa\Omega), \\
\vzeta^{\ k} \to \vzeta \qquad & \mbox{weakly in } L^a(0,T;W^{1,a}(\Omega;\R^L)), \\
\mathcal{S}^k \to \mathcal{S} \qquad & \mbox{weakly in } L^{r'}(\Omega_T).
\end{aligned}
\end{equation}
The only difference with respect to \eqref{e8} is a slightly worse information concerning the time derivative of $\vv$ connected with the convective term in the momentum equation. To proceed, we have to distinguish three cases: \newline
$\bullet \  r\geq \frac{11}{5}$ \newline
$\bullet \ \frac 95 < r <\frac {11}{5}$ \newline
$\bullet \ \frac 32 <r\leq \frac {9}{5}$

\subsubsection {The case $r\geq \frac {11}{5}$}
This case is the simplest one as we may proceed exactly as in the previous limit passage. The reason for it is that we may still use the velocity as test function in the limit of the momentum equation which is the most important piece of information to deduce that
\begin{equation} \label{f2}
\mathcal{S}^k:\mathcal {D}(\vv^k) \to \mathcal{S}^*(\cc,\theta,\mathcal{D}(\vv)):\mathcal{D}(\vv)
\end{equation}
weakly in $L^1(\Omega_T)$. Therefore we use the procedure described in the previous subsection to finish the proof of Theorem \ref{T3}.

\subsubsection{The case $\frac 95 < r <\frac {11}{5}$}

The main difference in this case is that we are not anymore able to justify \eqref{f2} and we have to use the definition of the weak solution based on the total energy balance. Therefore, before passing to the limit in the equations, we first write down the approximate total energy balance. We proceed similarly as above with the difference that we aim at weak formulation, not just the balance of the total energy. To this aim, for $\psi$ arbitrary sufficiently smooth function, we use as test function in \eqref{e10} the function $\vv^k \psi$, in \eqref{e9} we use as test function $\vec{z}\varphi \psi$ and sum up these two identities with \eqref{e11}. We integrate the resulted identity over time and get
\begin{equation} \label{f3}
\begin{aligned}
&\int_0^T \int_\Omega E^k \pa_t \psi \dx \dt + \int_0^T \int_\Omega \Big(\Big(|\vv|^2\xi_k(|\vv^k|^2) -\frac 12 \Xi_k(|\vv^k|^2) + e^k + Q^k\varphi^k\Big) \vv^k \\
+ &\varphi^k \vec{z} \cdot \mathfrak{q}_{\cc}^{k} + \vc{q}_e^k - \mathcal {S}^k\vv^k  + p^k\vv^k-\varphi^k \na \pa_t \varphi^k\Big)\cdot \na \psi \dx\dt
+ \int_\Omega E^k(0) \psi(0) \dx \\
= &\int_0^T \int_{\pa \Omega} \Big(\gamma^k |\vv^k|^2 \psi + \varphi^k \vec{z}\cdot \vec{q}_{\cc \ \Gamma}^{\ k} + q_{e\Gamma}^k -\varphi \pa_t q_{\varphi\Gamma}^k\Big)\psi \dS \dt,
\end{aligned}
\end{equation}
where $\Xi_k$ is a primitive function to $\xi_k$, $E^k = e^k + \frac 12 |\vv^k|^2 + \frac 12 |\na \varphi^k|^2$, $Q^k = \sum_{i=1}^L z_i c_i^k$ and $p^k$ is the pressure due to the divergence-less constraint on $\vv^k$. Note that the fact that we deal with the slip boundary conditions for the velocity plays an important role here; the pressure is namely known to be integrable, see \eqref{def.p1}--\eqref{ap.pres}. Estimates \eqref{ap.pres} (with $p_i$ replaced by $p_i^k$, $i=1,\dots,4$) holds true. Note finally that the meaning of the fluxes with the upper index $k$ is similar as in the previous limit passages.

We may now pass to the limit in the weak formulation \eqref{e9}, \eqref{e10}, \eqref{f3} and \eqref{e12}. As
$$
|\vv|^2\xi_k(|\vv^k|^2) -\frac 12 \Xi_k(|\vv^k|^2) \to \frac 12 |\vv|^2
$$
strongly in $L^q(\Omega_T)$ for any $q<\frac{5r}{6}$ and $\vv^k \to \vv$ weakly in $L^\frac{3r}{3-r}(\Omega_T;\R^3)$, the limit passage in the convective term of the total energy (see \eqref{f3}) is possible for $r>\frac 95$. All other terms in the weak formulation of the total energy, concentration equation, momentum equation and the equation for the electrostatic potential are simpler. We rewrite the time derivative and integrate the equalities over time. We get

\begin{equation} \label{f4}
\begin{aligned}
& -\int_{\Omega_T} \cc \cdot \pa_t \vec{\psi} \dx \dt - \int_\Omega \cc^{\ 0} \cdot \vec{\psi}(0) \dx + \int_{\Omega_T} - \big(\cc\otimes \vv + {\mathfrak q}_{\cc}\big) : \nabla \vec{\psi} \dx \dt \\
&= -  \int_{\Gamma} \vec{q}_{\cc\,\Gamma} \cdot \vec{\psi} \dS \dt + \int_{\Omega_T} \vec{r} \cdot \vec{\psi} \dx \dt
\end{aligned}
\end{equation}
for all $\vec{\psi} \in C^\infty(\Omega_T;\R^L)$, $\vec{\psi}(T) =\vec{0}$,
\begin{equation} \label{f5}
\begin{aligned}
&  -\int_{\Omega_T} \vv \cdot \pa_t \vc{u} \dx \dt - \int_\Omega \vv^{0} \cdot \vc{u}(0) \dx + \int_{\Omega_T} {\mathcal S} : \mathcal{D}(\vc{u}) - (\vc{v}\otimes \vv) : \nabla \vc{u}  \dx \dt \\
&= -  \int_{\Gamma} \gamma\vv\cdot \vc{u} \dS \dt - \int_{\Omega_T} Q\nabla \varphi \cdot \vc{u} \dx \dt
\end{aligned}
\end{equation}
for all $\vc{u} \in C^\infty(\Omega_T;\R^3)$, $\vc{u}(T) = 0$, $\vc{u}\cdot \nu = 0$ on $\partial \Omega$, $\Div \vc{u} =0$ in $\Omega_T$,
\begin{equation} \label{f6}
\begin{aligned}
&\int_0^T \int_\Omega E \pa_t \psi \dx \dt + \int_0^T \int_\Omega \Big(\Big(\frac 12 |\vv|^2 + e + Q\varphi\Big) \vv \\
+ &\varphi \vec{z} \cdot \mathfrak{q}_{\cc} + \vc{q}_e - \mathcal {S}\vv  + p\vv \mathcal{I} -\varphi \na \pa_t \varphi\Big)\cdot \na \psi \dx\dt
+ \int_\Omega E(0) \psi(0) \dx \\
=& \int_0^T \int_{\pa \Omega} \Big(\gamma |\vv|^2 \psi + \varphi \vec{z}\cdot \vec{q}_{\cc \ \Gamma} + q_{e\Gamma} -\varphi \pa_t q_{\varphi\Gamma}\Big)\psi \dS \dt
\end{aligned}
\end{equation}
for all $\psi \in C^\infty(\Omega_T)$, $\psi(T) =0$,
\begin{equation} \label{f7}
\int_\Omega \nabla \varphi\cdot\nabla \psi - Q\psi\dx = \int_{\partial \Omega} q_{\varphi \Gamma} \psi \dS
\end{equation}
for all $\psi \in W^{1,2}(\Omega)$.

We would like to identify the limits of the fluxes, especially we would like to show that
\begin{equation} \label{f8}
\mathcal{S} = \mathcal{S}^*(\cc,\theta,\mathcal{D}(\vv)).
\end{equation}
However, we meet here a new difficulty. Since we cannot use as test function in the limit momentum equation the function $\vv$, the idea based on the theory of the monotone operators is not any more applicable. We have to replace it by another method, based on the Lipschitz truncation method. Strictly speaking, for the parameter $r$ in the present range we could apply another method, based on the $L^\infty$-truncation, but this method works only up to $r=\frac 85$ and our aim is to consider in the next case $\frac 32<r<\frac 85$. For this reason we present here only the method based on the Lipschitz truncation. This methods has the origin in the work \cite{FrMaSt} (for steady case) and its parabolic version were obtained in \cite{DiRuWo} and for Navier slip boundary condition and general Orlicz growth in \cite{BuGwMaSw}. Here, we use the very improved version of this method, which is introduced in \cite{BrDiSc}.


Note first that it is not difficult to justify (by using the strong convergence of $\cc^{\ k}$ and $\theta^k$) that
$$
\mathcal{S}^k  - \mathcal{S}^*(\cc,\theta,\mathcal{D}(\vv^k)) \to 0
$$
weakly in $L^{r'}(\Omega_T;\R^9)$ and even strongly in $L^1(\Omega_T; \R^9)$, therefore we will replace \eqref{f8} by considering the
\begin{equation} \label{f9}
\lim_{k\to \infty} \mathcal{S}_0^k-\mathcal{S}^*(\cc,\theta,\mathcal{D}(\vv)),
\end{equation}
where we denoted
$\mathcal{S}_0^k = \mathcal{S}^*(\cc,\theta,\mathcal{D}(\vv^k))$.

We will use  Theorem 2.2 and Corollary 2.4 from \cite{BrDiSc}. We first introduce certain notation. For $\alpha >0$ we say that $Q = I\times B\subset\R\times \R^3$ is an $\alpha$-parabolic cylinder, if $r_I = \alpha r_B^2$,  where $r_I$ is the radius of the interval $I$ and $r_B$ the radius of the ball $B$. By ${\mathcal Q}^\alpha$ we denote the set of all $\alpha$-parabolic cylinders. For $\kappa>0$ we denote $\kappa Q$ the scaled cylinder $\kappa Q = (\kappa I) \times (\kappa B)$, where $\kappa B$ is the scaled ball with the same center, similarly $\kappa I$. Then $\alpha$-parabolic maximal operators ${\mathcal M}^\alpha$ and ${\mathcal M}^\alpha_s$, $s\in [1,\infty)$ are defined
$$
\begin{array}{c}
\displaystyle ({\mathcal M}^\alpha f)(t,x) := \sup_{Q' \in {\mathcal Q}^\alpha; (t,x) \in Q'} \frac{1}{|Q'|} \int_{Q'} |f(s,y)|\, {\mathrm d}s \, {\mathrm d}y, \\
\displaystyle ({\mathcal M}^\alpha_s f)(t,x):= \Big(({\mathcal M}^\alpha |f|^s)(t,x)\Big)^{\frac 1s}.
\end{array}
$$
For $\lambda$, $\alpha>0$ and $\sigma>1$ we define
$$
 {\mathcal O}^\alpha_\lambda (\vc{z}) := \Big\{ (t,x); ({\mathcal M}^\alpha_\sigma(\xi_{\frac 13 Q_0}|\na^2 \vc{z}|)>\lambda \cap ({\mathcal M}^\alpha_\sigma(\xi_{\frac 13 Q_0}|\partial_t \vc{z}|)>\lambda\Big\},
$$
where $\xi$ is a suitable cut-off function and $\vc{z} \sim \na^{-1}\vv$; for more precise definition of $\vc{z}$ see the proof of Theorem 2.2 in \cite{BrDiSc}.

We have (see Theorem 2.2 and Corollary 2.4 in \cite{BrDiSc})
\begin{prop} \label{4.1}
Let $1<r<\infty$, $r$, $r'>\sigma$. Let $\vc{w}^k$ and $\mathcal{G}^k$ satisfy
$$
\langle \partial_t \vc{w}^k,\vc{u} \rangle = \langle\Div \mathcal{G}^k,\vc{u}\rangle
$$
for all $\vc{u} \in C^\infty_{0,\rm{div}}(Q_0)$, $Q_0= I_0 \times B_0 \subset \R \times \R^3$. Assume that $\vc{w}^k$ is a weak null sequence in $L^r(I_0;W^{1,r}(B_0;\R^3))$ and a strong null sequence in $L^\sigma(Q_0;\R^3)$ and bounded in $L^\infty(I_0;L^\sigma(B_0;\R^3))$. Further assume that $\mathcal{G}^k = \mathcal{G}^k_1 + \mathcal{G}^k_2$ such that $\mathcal{G}^k_1$ is a weak null sequence in $L^{r'}(Q_0;\R^9)$ and $\mathcal{G}^k_2$ converges strongly to zero in $L^\sigma(Q_0;\R^9)$. Then there exists a double sequence $\{\lambda_{m,k}\}\subset \R^+$ and $m_0 \in \N$ with
\begin{itemize}
\item[(a)] $2^{2^m} \leq \lambda_{m,k} \leq 2^{2^{m+1}}$ \newline
such that the double sequence $\vc{w}^{m,k} := \vc{w}^{\lambda_{m,k}}_{\alpha_{m,k}} \in L^1(Q_0;\R^3)$, $\alpha_{m,k} := \lambda_{m,k}^{2-r}$ and ${\mathcal O}_{m,k}:={\mathcal O}_{\lambda_{m,k}}^{\alpha_{m,k}}$ defined above satisfy for all $m\geq m_0$
\item[(b)] $\vc{w}_{m,k} \in L^s(\frac 14 I_0;W^{1,s}_{0,\rm{div}}(\frac 16 B_0;\R^3))$ for all $s <\infty$ and $\operatorname{supp}\, \vc{w}_{m,k} \subset \frac 16 Q_0$
\item[(c)] $\vc{w}_{m,k} = \vc{w}^k$ a.e. on $\frac 18 Q_0 \setminus {\mathcal O}_{m,k}$
\item[(d)] $\|\nabla \vc{w}_{m,k} \|_{L^\infty(\frac 14 (Q_0);\R^3)} \leq c \lambda_{m,k}$
\item[(e)] $\vc{w}_{m,k} \to 0$ in $L^\infty(\frac 14 Q_0;\R^3)$ for $k \to \infty$ and $m$ fixed
\item[(f)] $\nabla \vc{w}_{m,k} \rightharpoonup^* 0$ in $L^\infty(\frac 14 Q_0;\R^9)$ for $k \to \infty$ and $m$ fixed
\item[(g)] $\limsup _{k\to \infty} \lambda^q_{m,k} |{\mathcal O}_{m,k}| \leq c 2^{-m}$
\item[(h)] $\limsup_{k\to \infty}\Big| \int_{Q_0}\mathcal{G}^k: \nabla\vc{w}_{m,k}\dx\dt\Big| \leq c \lambda_{m,k}^r |{\mathcal O}_{m,k}|$
\item[(i)] Additionally, let $\zeta \in C^\infty_0(\frac 16 Q_0)$ with $\chi_{\frac 18 Q_0} \leq \zeta \leq \chi_{\frac 16 Q_0}$. Let $\vc{w}^k$ be uniformly bounded in $L^\infty(I_0;L^\sigma(B_0;\R^3))$, then for every $\mathcal{K} \in L^{r'}(\frac 16 Q_0;\R^9)$
$$
\limsup_{k\to \infty} \Big| \Big(\int_{Q_0}(\mathcal{G}^k_1+\mathcal{K}):\nabla \vc{w}^k\Big)\zeta \chi_{{\mathcal O}_{m,k}^C}\dx\dt\Big| \leq c 2^{-\frac mr}.
$$
\end{itemize}
\end{prop}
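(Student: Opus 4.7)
The plan is to follow the parabolic Lipschitz truncation scheme of Breit--Diening--Schwarzacher, adapted to the solenoidal setting by localizing the PDE and combining a Whitney-type covering of the \emph{bad set} with a divergence-free partition of unity. Concretely, I would first use the PDE $\partial_t \vc{w}^k = \Div\mathcal{G}^k$ in $Q_0$ to undo one time derivative by introducing the Bogovski-like potential $\vc{z}^k$ satisfying $\Delta \vc{z}^k = \vc{w}^k$ (or, more precisely, a vector-potential representation that is compatible with $\Div \vc{w}^k = 0$), so that $\vc{w}^k \sim \nabla^2 \vc{z}^k$ and $\partial_t \vc{z}^k$ is controlled by $\mathcal{G}^k$. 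This turns the problem into a Calder\'on--Zygmund-type question for $\vc{z}^k$ with respect to the $\alpha$-parabolic geometry, where $\alpha = \lambda^{2-r}$ is the natural intrinsic scaling that balances the diffusive and convective orders.

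Next I would localize by a fixed cutoff $\xi \in C^\infty_0(\tfrac{1}{3}Q_0)$ with $\xi \equiv 1$ on $\tfrac{1}{4}Q_0$ and define the bad set $\mathcal{O}^{\alpha}_\lambda(\vc{z})$ exactly as in the statement, using the $\sigma$-maximal functions of $\xi |\nabla^2 \vc{z}|$ and $\xi |\partial_t \vc{z}|$. On $\mathcal{O}^{\alpha}_\lambda(\vc{z})^{c}$ one gets, by standard characterizations of Sobolev/parabolic spaces through maximal functions, that $\vc{z}$ is already $C^{1,1}$ with bounds proportional to $\lambda$ in the $\alpha$-parabolic metric, hence $\nabla\vc{z} = \vc{w}$ is Lipschitz there. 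Property (g), $\lambda^q|\mathcal{O}_{m,k}| \lesssim 2^{-m}$, is then the weak-type $L^\sigma \to L^\sigma_{\mathrm{weak}}$ bound for $\mathcal{M}^\alpha_\sigma$ combined with a dyadic choice of $\lambda_{m,k} \in [2^{2^m}, 2^{2^{m+1}}]$ (the freedom in this interval is used to evade the measure of level sets on which the bound would degenerate).

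The truncation $\vc{w}_{m,k}$ is then constructed by taking a Whitney-type covering of $\mathcal{O}_{m,k}\cap \tfrac{1}{6}Q_0$ by $\alpha_{m,k}$-parabolic cylinders $Q_j$, an associated partition of unity $\{\psi_j\}$, and correcting the naive definition
$$
\tilde{\vc{w}}_{m,k} := \zeta \vc{w}^k - \sum_j \psi_j \bigl(\vc{w}^k - \langle \vc{w}^k\rangle_{Q_j}\bigr)\chi_{\mathcal{O}_{m,k}}
$$
by subtracting a Bogovski-type solenoidal corrector $\vc{B}_j$ in each $Q_j$ so that $\Div \vc{w}_{m,k}=0$. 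Properties (b), (c), (d) then follow from the standard Whitney estimates (radius of $Q_j$ $\sim \mathrm{dist}_\alpha(Q_j,\mathcal{O}_{m,k}^c)$) combined with Poincar\'e--Sobolev on each $Q_j$ and the equivalence $|\vc{w}^k - \langle\vc{w}^k\rangle_{Q_j}| \lesssim r_{Q_j}\lambda_{m,k}$ that holds because $Q_j$ touches the good set. Properties (e), (f) are a compactness consequence of the strong $L^\sigma$ null convergence of $\vc{w}^k$ together with (d) (uniform Lipschitz bound).

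The crucial analytic estimates are (h) and (i). For (h) one writes $\int \mathcal{G}^k : \nabla \vc{w}_{m,k}$ and uses the PDE to convert it into a time-derivative term paired against $\vc{w}_{m,k}$; by construction $\partial_t \vc{w}_{m,k}$ is $L^\infty$-bounded by $\lambda_{m,k}^{2-r}\cdot\lambda_{m,k} = \lambda_{m,k}^{3-r}$ on $\mathcal{O}_{m,k}$ of parabolic measure $|\mathcal{O}_{m,k}|$, and the estimate $\int |\mathcal{G}^k|\,|\nabla\vc{w}_{m,k}|\chi_{\mathcal{O}_{m,k}} \lesssim \lambda_{m,k}\int_{\mathcal{O}_{m,k}}|\mathcal{G}^k|$ combined with an $L^{r'}$ H\"older inequality and (g) yields the claimed $\lambda_{m,k}^r|\mathcal{O}_{m,k}|$ bound. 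Statement (i), which is what is really used in the paper, is the refined fact that \emph{outside} $\mathcal{O}_{m,k}$ the truncation agrees with $\vc{w}^k$, so the integrand pairs $\mathcal{G}^k_1 + \mathcal{K}$ (weakly null plus arbitrary $L^{r'}$) against $\nabla\vc{w}^k$ on a set of small measure; after localizing with $\zeta$ one controls this by an $L^{r'}\otimes L^r$ H\"older and (g).

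I expect the main obstacle to be twofold: (1) the careful choice of the \emph{double} scale $\lambda_{m,k}$ so that both (g) and (h) hold simultaneously with the same exponent, which forces the dyadic-interval freedom $[2^{2^m},2^{2^{m+1}}]$ and a pigeonhole argument on $k$; and (2) preserving $\Div \vc{w}_{m,k}=0$, which requires the Bogovski corrector in each Whitney cylinder and a nontrivial verification that this corrector does not spoil the $L^\infty$ gradient bound (d). Both points are handled in \cite{BrDiSc}, and one just needs to check that the slip setting (acting on $\tfrac16 Q_0$ compactly contained in the interior) is covered by the interior version of the result, which is exactly the form stated above.
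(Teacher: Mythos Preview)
The paper does not actually prove Proposition~\ref{4.1}: it is stated with the remark ``see Theorem 2.2 and Corollary 2.4 in \cite{BrDiSc}'' and is used as a black box. Your sketch is a reasonable outline of the argument in \cite{BrDiSc} (potential $\vc{z}$, $\alpha$-parabolic maximal functions, Whitney covering of the bad set, solenoidal Bogovski\u{\i} corrector, pigeonhole choice of $\lambda_{m,k}$), and you correctly identify the two delicate points (the double-scale selection and the divergence-free correction); so there is no discrepancy of approach to discuss, only that the paper outsources the proof entirely while you attempt to summarize it.
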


We apply this theorem to our problem; cf. Theorem 3.1 in \cite{BrDiSc}. We denote $\vc{w}^k = \vv^k - \vv$.  Then
$$
\begin{array}{c}
\vc{w}^k \rightharpoonup {\mathbf 0} \quad \mbox{ in } L^r(0,T;W^{1,r}_{\textrm{div}}(\Omega)), \\
\vc{w}^k \to {\mathbf 0} \quad \mbox{ in }
L^{2\sigma}(Q_T;\R^3), \\
\vc{w}^k \rightharpoonup^* {\mathbf 0} \quad \mbox{ in }L^\infty(0,T;L^2(\Omega;\R^3)),
\end{array}
$$
see \eqref{f1}. Further
$$
\int_0^T \int_{\Omega} \vc{w}^k \cdot \partial_t \vc{u} \dx\dt= \int_0^T \int_\Omega \mathcal{G}^k :\mathcal{D}(\vc{u})\dx\dt
$$
for all $\vc{u} \in C^\infty_{0}(\Omega_T;\R^3)$, where $\mathcal{G}^k = \mathcal{G}^k_1+ \mathcal{G}^k_2$ with
$$
\begin{array}{c}
\mathcal{G}^k_1 = \mathcal{S}^k_0-\mathcal{S} \\
\mathcal{G}^k_2 = -(\vv^k\otimes \vv^k) \xi(|\vv^k|^2)+ (\vv\otimes \vv) +Q^k\nabla \varphi_k -Q \nabla \varphi +\mathcal{S}^k_0-\mathcal{S}^k.
\end{array}
$$
We have $\|\mathcal{G}^k_1\|_{L^{r'}(0,T;L^{r'}(\Omega;\R^9))} \leq C$ and $\mathcal{G}^k_2 \to \mathbf{0}$ in $L^{\sigma_1}(\Omega_T)$ for some $\sigma_1 >1$ see \eqref{f3}.

Take now $Q\subset\subset(0,T)\times \Omega$. Due to properties mentioned above, assumptions of Proposition \ref{4.1} (i) are fulfilled. Hence,  plugging in $\mathcal{K} = -\mathcal{S}^*(\cc,\theta, \mathcal{D}(\vv)) + \mathcal{S}$ we have for $\zeta \in C^\infty_0(\frac 16 Q_0)$
$$
\limsup_{k\to \infty}\Big|\int_0^T \int_{\Omega}
\big(\mathcal{G}^k_1+\mathcal{K}\big):\mathcal{D}(\vc{w}^k) \zeta \chi_{{\mathcal O}_{m,k}^C}\dx\dt \Big| \leq C 2^{-\frac mr}.
$$
Therefore, using the fact that $\vc{w}^k$ coincides with $(\vv^k-\vv)$ on ${\mathcal O}_{m,k}^C$ and also the definition of $\mathcal{G}^k_1$, we get
$$
\limsup_{k\to \infty}\Big|\int_0^T \int_{\Omega}
\big(\mathcal{S}^k_0-\mathcal{S}^*(\cc,\theta, \mathcal{D}(\vv))\big):\mathcal{D}(\vv^k-\vv) \zeta \chi_{{\mathcal O}_{m,k}^C}\dx\dt \Big| \leq C 2^{-\frac mr}.
$$
This however due to the monotonicity of $\mathcal{S}^*$ implies that
$$
\limsup_{k\to \infty}\int_0^T \int_{\Omega}
\Big|\big(\mathcal{S}^k_0-\mathcal{S}^*(\cc,\theta, \mathcal{D}(\vv))\big):\mathcal{D}(\vv^k-\vv)\Big| \zeta \chi_{{\mathcal O}_{m,k}^C}\dx\dt \leq C 2^{-\frac mr}.
$$
Next, we take arbitrary $\mu \in (0,1)$ and  by virtue of H\"older's inequality and Proposition~\ref{4.1} (g) we deduce that
\begin{align*}
&\limsup_{k\to \infty}\int_0^T \int_{\Omega}\Big|(
\mathcal{S}^k_0-\mathcal{S}^*(\cc,\theta, \mathcal{D}(\vv))) :\mathcal{D}(\vv^k-\vv) \Big|^\mu\zeta \chi_{{\mathcal O}_{m,k}}\dx\dt \\
&\leq C\limsup_{k\to \infty} |{\mathcal O}_{k,m}|^{1-\mu} \leq C 2^{-(1-\mu)\frac mr}.
\end{align*}
Thus, combining both estimates we observe that
$$
\limsup_{k\to \infty}\int_0^T \int_{\Omega}\Big|(
\mathcal{S}^k_0-\mathcal{S}^*(\cc,\theta, \mathcal{D}(\vv))) :\mathcal{D}(\vv^k-\vv) \Big|^\mu \leq   C 2^{-(1-\mu)\frac mr}.
$$
Taking $\lim_{m\to \infty}$ the right-hand side tends to zero and consequently we have that
$$
\Big|(
\mathcal{S}^k_0-\mathcal{S}^*(\cc,\theta, \mathcal{D}(\vv))) :\mathcal{D}(\vv^k-\vv) \Big|^\mu \to 0
$$
in $L^1 (0,T; L^1(\Omega))$. Thus, using the Egorov theorem, we find that for any $\varepsilon>0$ there exists $\Omega_T^{\varepsilon}$ such that
$$
(
\mathcal{S}^k_0-\mathcal{S}^*(\cc,\theta, \mathcal{D}(\vv))) :\mathcal{D}(\vv^k-\vv) \to 0
$$
uniformly in $\Omega_T^{\varepsilon}$ and $|\Omega_T \setminus \Omega_T^{\varepsilon}|\le \varepsilon$, which due to the weak convergence of $\vv^k$ to $\vv$ also implies that $\mathcal{S}_0^k : \mathcal{D}(\vv^k-\vv)\to 0$ weakly in $L^{1}(\Omega_T^{\varepsilon})$. Due to the monotonicity of $\mathcal{S}^*$ and \eqref{f9}, we can apply the Minty method to finally obtain that
$$
\mathcal{S} = \mathcal{S}^*(\cc,\theta,\mathcal{D}(\vv)) \qquad \textrm{ a.e. in } \Omega_T^{\varepsilon}.
$$
Since, $\varepsilon>0$ is arbitrary, we can finally let it to zero and due to the fact that $|\Omega_T \setminus \Omega_T^{\varepsilon}|\le \varepsilon$, we see that the above identity holds a.e. in $\Omega_T$
Therefore the proof of \eqref{f8} is finished.


Having shown \eqref{f8}, we may easily repeat the arguments from the preceding section, including the validity of the entropy inequality. Note that due to the form of the weak formulation of the total energy it is not difficult to verify the information about the continuity of $E$. Theorem \ref{T2} is proved. Notice finally that the weak formulation of the total energy holds for any $r>\frac 95$, including the situation covered by Theorem \ref{T3}. However, we my deduce the weak formulation of the total energy after the limit passage as we may test the momentum equation by $\vv \psi$.

\subsubsection{The case $\frac 32 <r\leq \frac 95$}

In this situation we do not have enough information about the integrability of the velocity to justify neither the limit passage in the weak formulation of the internal energy balance nor in the weak formulation of the total energy balance. The way out is to pass to the limit in the weak formulation of the internal energy balance with only non-negative test functions, receiving only inequality in limit, and pass to the limit in the total energy balance (weak formulation tested by a constant test function).

In the latter case, approximating first the characteristic function of $[0,t]\times \Omega$ and then letting $k \to \infty$ in \eqref{f3} the total energy inequality
\begin{equation} \label{f50}
\begin{aligned}
&  \int_\Omega E(t) \psi \dx  +  \int_0^t \int_{\pa \Omega} \Big(\gamma |\vv|^2 \psi + \varphi \vec{z}\cdot \vec{q}_{\cc \ \Gamma} + q_{e\Gamma} -\varphi \pa_t q_{\varphi\Gamma}\Big)\psi \dS \dt \leq \int_{\Omega} E(0) \dx.
\end{aligned}
\end{equation}
Next, taking the limit $k\to \infty$ in the weak formulation of the internal energy balance yields due to the weak lower semicontinuity of $\int_{\Omega_T}\mathcal{S}^k :\mathcal{D}(\vv^k) \dx \dt$ inequality \eqref{w_ineq_e}. Note that the limit $r>\frac 32$ is due to the integrability of the term $\vv e$. To pass to the limit in the momentum equation, we use the Lipschitz truncation method exactly as in the case $r>\frac 95$. The rest is the same as in the previous case. Theorem \ref{T1} is proved.

Finally note that it would be more natural to replace the inequality in the internal energy balance by the inequality in the entropy inequality (with nonnegative test functions). This is, however, not so easy as we do not have enough information about the time integrability of some terms. The trouble makers are terms with $\theta$ in the negative powers. Therefore we can dispose  with the entropy inequality tested only by a constant function.

\medskip

\noindent {\bf Acknowledgement:} The first and the second authors (MB and MP) were partially supported by the Czech Science Foundation (grant no. 16-03230S). The third author (NZ) acknowledges support from the Austrian Science Fund (FWF), grants P22108, P24304, W1245. The paper was mostly written during the visits of NZ in Prague and MP and MB in Vienna in the frame of the project MOBILITY 7AMB15AT005. The authors acknowledge also this support.

\def\cprime{$'$}



\end{document}